\DeclareMathAlphabet{\mathpzc}{OT1}{pzc}{m}{it}
\definecolor{gold}{rgb}{0.85,.66,0}
\definecolor{cherry}{rgb}{0.9,.1,.2}
\definecolor{burgundy}{rgb}{0.8,.2,.2}
\definecolor{orangered}{rgb}{0.85,.3,0}
\definecolor{orange}{rgb}{0.85,.4,0}
\definecolor{olive}{rgb}{.45,.4,0}
\definecolor{lime}{rgb}{.6,.9,0}
\definecolor{green}{rgb}{.2,.7,0}
\definecolor{grey}{rgb}{.4,.4,.2}
\definecolor{brown}{rgb}{.4,.3,.1}
\newcommand\CS{{\rm CS}}
\newcommand\hrI{{\widehat {\rm I}}}
\newcommand\hrC{{\widehat { C}}}
\newcommand\crI{{\widecheck {\rm I}}}
\newcommand\crC{{\widecheck { C}}}
\newcommand\brI{{\overline {\rm I}}}
\newcommand\brC{{\overline {C}}}
\newcommand\mdeg{{\rm mdeg}}
\newcommand{\Z}{{\bf Z}}
\newcommand{\Q}{{\bf Q}}
\newcommand{\R}{{\bf R}}
\newcommand{\C}{{\bf C}}
\newcommand{\U}{{\rm U}}
\newcommand{\SU}{{\rm SU}}
\newcommand{\SO}{{\rm SO}}
\newcommand{\su}{\mathfrak{su}}
\newcommand{\fd}{{\mathfrak d}}
\newcommand{\fA}{{\mathfrak A}}
\newcommand{\fC}{{\mathfrak C}}
\newcommand{\fD}{{\mathfrak D}}
\newcommand{\fK}{{\mathfrak K}}
\newcommand{\fL}{{\mathfrak L}}
\newcommand{\fU}{{\mathfrak U}}
\newcommand{\loc}{{\rm loc}}
\newcommand{\dvol}{{\rm dvol}}
\numberwithin{equation}{section}
\newcommand{\Addresses}{{
  \bigskip
  \footnotesize

  Aliakbar Daemi, \textsc{Simons Center for Geometry and Physics, State University of New York,
  Stony Brook, NY 11794}\par\nopagebreak
  \textit{E-mail address}: \texttt{adaemi@scgp.stonybrook.edu}
}}
\theoremstyle{plain}
\def\makeautorefname#1#2{\AtBeginDocument{\expandafter\def\csname#1autorefname\endcsname{#2}}}
\newcommand{\mynewtheorem}[2]{
  \newaliascnt{#1}{equation}          
  \newtheorem{#1}[#1]{#2}
  \aliascntresetthe{#1}
  \makeautorefname{#1}{#2}
}
\numberwithin{substep}{step}
\numberwithin{subcase}{case}
\theoremstyle{remark}
\theoremstyle{definition}
\newtheorem*{convention*}{Convention}
\newtheorem*{conventions*}{Conventions}
\newtheorem{theorem-intro}{Theorem}
\newtheorem{cor-intro}{Corollary}
\newtheorem{prop-intro}{Proposition}
\DeclareMathOperator{\tr}{tr}
\DeclareMathOperator{\ind}{index}
\def\({\mathopen{}\left(}
\def\){\right)\mathclose{}}
\title{\large \bf Chern-Simons Functional and the Homology Cobordism Group}
\author{\bf \sc \large Aliakbar Daemi\thanks{The work of the author was supported by NSF Grant DMS-1812033.}}
\begin{document}

\maketitle
\begin{abstract}
	For each integral homology sphere $Y$, a function $\Gamma_Y$ on the set of integers is constructed. It is established that $\Gamma_Y$ depends only on the homology cobordism 
	of $Y$ and it recovers the Fr{\o}yshov invariant. A relation between $\Gamma_Y$ and Fintushel-Stern's $R$-invariant is stated. It is shown that the value of $\Gamma_Y$ at each integer is related to the critical values of the Chern-Simons functional. Some topological 
	applications of $\Gamma_Y$ are given. In particular, it is shown that if $\Gamma_Y$ is trivial, then there is no simply connected homology cobordism from $Y$ to itself.
\end{abstract}
\maketitle

\section{Introduction}

Various Floer homology theories provide powerful tools in 3-manifold topology \cite{Fl:I,OzSz:HF,KM:monopoles-3-man}. The definitions of these invariants follow a similar pattern. To a given 3-manifold $Y$, one associates a pair of an infinite dimensional space $\mathcal B(Y)$ and a functional $\CS$, defined on $\mathcal B(Y)$.\footnote{Here $\CS$ stands for the Chern-Simons functional, which is the relevant functional in the case of instanton Floer homology. This is not a standard notation for other 3-manifold Floer homologies. We use this notation to emphasize on formal similarities among the definitions of these 3-manifold Floer homology theories.} Then the relevant Floer homology of $Y$ is obtained by applying the Morse homological methods to the functional $\CS$. A unique feature of {\it instanton homology} \cite{Fl:I} among other Floer homology theories is that both $\mathcal B(Y)$ and the functional $\CS$ are topological. On the other hand, one needs to fix additional auxiliary structures for {\it monopole Floer homology} (in the form of a Riemannian metric for $Y$) and {\it Heegaard Floer homology} (including a Heegaard diagram for $Y$) to define $\mathcal B(Y)$ and $\CS$. In the present article, we exploit this property of instanton Floer homology to introduce an invariant of integral homology 3-spheres which is preserved by {\it homology cobordisms}. The definition of this invariant is partly inspired by ideas from Min-Max theory.\footnote{See Appendix \ref{min-max-app} for an elaboration on this point.} This homology cobordism invariant provides a platform to unify works of various authors including Donaldson \cite{Don:neg-def-gauge}, Fintushel-Stern \cite{FS:pseudofree,FS:HFSF}, Fr{\o}yshov \cite{Fro:h-inv}, Furuta \cite{Fur:hom-cob}, Hedden-Kirk \cite{KH:Wh-dble}.

\subsection{Statement of Results}
Let $Y$ and $Y'$ be two integral homology spheres. A cobordism $W$ from $Y$ to $Y'$ is a smooth 4-manifold with boundary $-Y\sqcup Y'$. The 3-manifolds $Y$ and $Y'$ are {\it homology cobordant}, if there is a cobordism $W$ from $Y$ to $Y'$ such that $H_*(W,Y;\Z)=H_*(W,Y';\Z)=0$. The collection of all integral homology spheres modulo homology cobordism relation is called the {\it homology cobordism group} and is denoted by $\Theta_\Z^3$.

Suppose $Y$ is an integral homology sphere. As the main construction of the present article, we introduce a function $\Gamma_Y:\Z \to \overline {\R}^{\geq 0}$, where $ \overline {\R}^{\geq 0}$ is the extended positive real line $\R^{\geq 0}\cup \{\infty\}$, equipped with the obvious ordering. The following theorem states some of the basic properties of this function:
\begin{theorem-intro}\label{basic-prop}
	The function $\Gamma_Y$ satisfies the following properties:
	\begin{itemize}
		\item[(i)] $\Gamma_Y$ is non-decreasing.
		\item[(ii)] If there is a homology cobordism from the integral homology sphere $Y$ to another 
			integral homology sphere $Y'$, then $\Gamma_Y=\Gamma_{Y'}$.
	\end{itemize}
\end{theorem-intro}

Both parts of the above theorem can be strengthened. Before stating an improvement of Theorem \ref{basic-prop} (i), we need to give a definition:

\begin{definition}
	An integral homology sphere $Y$ is {\it $\SU(2)$-non-degenerate}, if all $\SU(2)$ flat connections 
	on $Y$ are non-degenerate. That is to say, for any flat connection $\alpha$ on $Y$, we have $H^1(Y;ad_\alpha)=0$
	where $ad_\alpha$ is the flat vector bundle of rank $3$ associated to the adjoint representation of $\alpha$.
\end{definition}

\begin{theorem-intro}\label{basic-prop-2}
	Suppose $Y$ is an integral homology sphere. There is a positive constant $\tau(Y)$ such that for any 
	positive integer $i$, we have: \[\Gamma_Y(i)\geq \tau(Y).\]
	Moreover, if $Y$ is $\SU(2)$-non-degenerate, then there is a positive constant $\tau'(Y)$ such that for any positive 
	integer $i$, we have:
	\[\Gamma_Y(i+1)\geq \Gamma_Y(i)+\tau'(Y).\]
\end{theorem-intro}

The constants $\tau(Y)$ and $\tau'(Y)$ in the above theorem can be explicitly defined in terms of the moduli spaces of anti-self-dual $\SU(2)$-connections on $\R\times Y$. (See Definitions \ref{tau} and \ref{tau'}.) The following theorem gives a generalization of part (ii) of Theorem \ref{basic-prop}:

\begin{theorem-intro} \label{neg-def}
	Suppose $W$ is a cobordism from an integral homology sphere $Y$ to another homology sphere $Y'$ 
	such that {$b_1(W)=0$} and the intersection form of $W$ is negative definite, i.e., $b^+(W)=0$. Then there is a non-negative constant $\eta(W)$ such that for any positive integer $i$ and non-positive integer $j$, we have\footnote{Here we define $\infty-k$ to be $\infty$
	for $k\in \overline {\R}^{\geq 0}$.}: 
	\[\Gamma_{Y'}(i)\leq \Gamma_{Y}(i)-\eta(W)\hspace{1cm}\Gamma_{Y'}(j) \leq \max(\Gamma_{Y}(j)-\eta(W),0).\]
	Moreover, the constant $\eta(W)$ is positive unless there is an $\SU(2)$-representation of $\pi_1(W)$ which extends non-trivial representations of $\pi_1(Y)$ and $\pi_1(Y')$.
\end{theorem-intro}

Gauge theoretical methods provide an important source of tools to study the group $\Theta^3_\Z$. In his groundbreaking work \cite{Fro:h-inv}, Fr{\o}yshov introduced a homomorphism $h:\Theta^3_\Z \to \Z$ which is defined using Floer's instanton homology of integral homology spheres. Fr{\o}yshov's construction motivated the definition of numerical invariants of homology cobordism group in the other Floer homologies \cite{Fr:SW-4-man-bdry,OzSz:d-inv,KM:monopoles-3-man,Fro:mon-h-inv,Man:tri-pin(2),Lin:MB,HM:inv-HF}, which have many interesting applications in low dimensional topology. Invariants with similar flavor are introduced in \cite{St:conn,HHL:conn} by building on the constructions of \cite{Man:tri-pin(2),HM:inv-HF}. The invariant $\Gamma_Y$ can be regarded  as a refinement of Fr{\o}yshov's $h$-invariant:
\begin{theorem-intro}\label{support-h-inv}
	The function $\Gamma_Y$ takes a finite value at an integer $k$ if and only if $k\leq 2h(Y)$.
\end{theorem-intro}

The invariant $h(Y)$ gives consraints on the intersection form of 4-manifolds $X$ which fill the 3-manifold $Y$. For example, if $h(Y)\leq 0$, then there is no negative definite 4-manifold $X$ with boundary $Y$ such that the intersection form of $W$ is not diagonal \cite[Theorem 3]{Fro:h-inv}. If we let $Y=S^3$, then this result specializes to Donaldson's groundbreaking {\it diagonalizability theorem} \cite{Don:neg-def-gauge}. Negative definite 4-manifolds can be also used to obtain constraints on $\Gamma_Y$:

\begin{theorem-intro}\label{NSIF-neg-def}
	Suppose $X$ is a 4-manifold whose boundary is an integral homology sphere $Y$. Suppose 
	the intersection form $Q$ of $X$ on $H^2(X;\Z)/{\rm Tor}$ has the following form
	\[
	  Q=(-1)\oplus \dots\oplus (-1)\oplus \mathcal L.
	\]
	Here $\mathcal L$ is a non-trivial negative definite lattice such that:
	\begin{equation} \label{m(l)}
	  m(\mathcal L):=\min_{\alpha\in \mathcal L,\,\alpha\neq 0}\{|Q(\alpha)|\}
	\end{equation}
	is greater than $1$. Then we have:
	\begin{equation}
	  	\hspace{3cm}\Gamma_Y(i)\leq \frac{m(\mathcal L)}{4} \hspace{1cm}i\leq \lfloor \frac{m(\mathcal L)}{2}\rfloor
	\end{equation}
	In particular, $\Gamma_{Y}(1)$ is a finite number.
\end{theorem-intro}
For a slightly more general version of Theorem \ref{NSIF-neg-def} see Propositions \ref{bounding-reducible} and \ref{bounding-reducible-2}. 

In the case that $Y=S^3$, we have:
\begin{equation} \label{triv-Gamma}
  \Gamma_{S^3}(k)=\left\{
  \begin{array}{ll}
 	\infty&k> 0\\
	0&k\leq 0\\
  \end{array}
  \right.	
\end{equation}
This is an immediate consequence of the definition of $\Gamma_Y$. It also follows from Theorem \ref{neg-def} applied to the product cobordism between two copies of $S^3$ and Theorem \ref{support-h-inv}. For an integral homology sphere, we say $\Gamma_Y$ is trivial, if $\Gamma_Y=\Gamma_{S^3}$. Theorems \ref{neg-def} and \ref{NSIF-neg-def} imply that:
\begin{cor-intro}\label{simp-hom-cob}
	Let $W$ be a homology cobordism from $Y$ to $Y'$ such that $\Gamma(Y)$ is non-trivial 
	(and hence $\Gamma(Y')$ is non-trivial). Then the inclusion of $Y$ and $Y'$ in $W$ induce non-trivial maps of fundamental groups. In particular, there is not a 
	simply-connected homology cobordism from $Y$ to itself if $h(Y)\neq 0$.
\end{cor-intro}

This corollary is an extension of \cite[Proposition 1.7]{Tau:gauge-per}, which was originally proved using gauge theory on manifolds with periodic ends. (See Remark \ref{man-per-ends} about the relation between Corollary \ref{simp-hom-cob} and \cite{Mas:per}.) 
It also answers a variant of the following question asked by Akbulut \cite[Problem 4.95]{Ki:problem-list}. In this question, $\mu$ denotes the Rokhlin homomorphism:
\begin{question}\label{Akbulut-question}
	Let $W$ be a homology cobordism from $Y$ to $Y'$ such that $\mu(Y)\neq 0$ (and hence $\mu(Y')\neq 0$). Then $W$ is not simply connected.
\end{question}

\begin{remark}[Levin and Lidman]
	The following question was raised in \cite[Remark 1.13]{HLL:con-hom}: given an integral homology sphere $Z$ 
	which bounds a homology 4-ball and $\gamma\in \pi_1(Z)$, does there exist a homology 4-ball $X$ such that $\partial X=Z$ and $\gamma$ is 
	null-homotopic in $X$?
	Corollary \ref{simp-hom-cob} may be used to give an affirmative answer to this question.
	Suppose $Y$ is an integral homology sphere with a weight 
	one fundamental group. Suppose $\gamma$ is a closed curve in $Y$ which normally generates $\pi_1(Y)$. 
	We also assume that $\Gamma_Y\neq \Gamma_{S^3}$. For example, we may take 
	$Y$ to be the poincar\'e homology sphere $\Sigma(2,3,5)$. (See Example \ref{simple-examples}.) The 3-manifold $Z=Y\#-Y$ bounds a homology 4-ball.
	However, there is no homology 4-ball $X$ with $\partial X=Z$ such that $\gamma$ vanishes in $\pi_1(X)$ because Corollary \ref{Akbulut-question} 
	asserts that the inclusion map induces a non-trivial map from $\pi_1(Y)$ (and also $\pi_1(-Y)$) to $\pi_1(X)$.
\end{remark}

Fintushel and Stern introduced an invariant for any Seifert fibered homology sphere $\Sigma(a_1,\dots,a_n)$ in \cite{FS:pseudofree}, which is denoted by $R(a_1,\dots,a_n)$ and is defined as follows:
\begin{equation}\label{Ra1an}
	R(a_1,\dots,a_n)=\frac{2}{a}-3+n+\sum_{i=1}^n\frac{2}{a_i}\sum_{k=1}^{a_i-1}\cot(\frac{\pi ka}{a_i^2})
	\cot(\frac{\pi k}{a_i})\sin^2(\frac{\pi k}{a_i}).
\end{equation}
Here $a=a_1\cdot a_2 \dots a_n$. It turns out that the above number is an odd integer, not smaller than $-1$. The following theorem states that $\Gamma_Y$ is related to Fintushel and Stern's $R$-invariant:
\begin{theorem-intro}\label{seifert-space-comp-thm}
	Let $Y$ be an integral homology sphere which has the form:
	\[Y=Y_1\#Y_2 \dots\#Y_k\]
	where $Y_i$ are (not necessarily distinct) Seifert fibered homology spheres with positive values of the 
	$R$-invariant. 
	Let $Y_1$ be the Seifert fibered space $\Sigma(a_1,a_2,\dots a_n)$ such that the value of the product 
	$a_1a_2 \dots a_n$ is maximum 
	among all Seifert spaces $Y_i$.  
	Then for $1\leq i \leq \lfloor \frac{R(a_1,\dots,a_n)+3}{4}\rfloor$, we have:
	\[
	  \Gamma_{Y}(i)=\frac{1}{4a_1a_2\dots a_n}.
	\]
	In particular, $h(\Sigma (a_1,\dots,a_n))\geq \frac{1}{2}\lfloor \frac{R(a_1,\dots,a_n)+3}{4}\rfloor$.
\end{theorem-intro}
The main theorem of \cite{FS:pseudofree} asserts that if $R(a_1,\dots,a_n)>0$, then there is no negative definite 4-manifold $X$ whose boundary is $-\Sigma(a_1,\dots,a_n)$. This result follows immediately from Theorems \ref{neg-def} and \ref{seifert-space-comp-thm}. The following well-known theorem of Furuta is another corollary of Theorem \ref{seifert-space-comp-thm}. 

\begin{cor-intro}[\cite{Fur:hom-cob}]
	Suppose $\{Y_i=\Sigma(a_{i,1}, \dots, a_{i,n_i})\}_{i\in I}$ is a collection of Seifert fibered homology spheres with 
	positive $R$-invariants such that the positive integers $a_i:=a_{i,1} a_{i,2} \dots a_{i,n_i}$ are distinct.
	Then the integral homology spheres $Y_i$ determine linearly independent elements of $\Theta_\Z^3$.	
\end{cor-intro}

As a special case, Furuta's result implies that the integral homology spheres $\{\Sigma(p,q,pqk-1)\}_{k\in \Z^{>0}}$, for coprime positive integers $p$, $q$, span a subgroup of $\Theta_\Z^3$ isomorphic to $\Z^{\infty}$ because $R(p,q,pqk-1)=1$.

\begin{cor-intro} \label{whitehead-dble}
	Suppose $D(T_{p,q})$ denotes the Whitehead double of the $(p,q)$-torus knot and 
	$Y_{p,q}$ is the 3-manifold $-\Sigma(D(T_{p,q}))$, the branched double cover of 
	$D(T_{p,q})$ with the reverse orientation.
	Then we have:
	\begin{equation}\label{Ypq-ineq}
	  \frac{1}{4pq(4pq-1)} \leq \Gamma_{Y_{p,q}}(1)\leq \frac{1}{4pq(2pq-1)}
	\end{equation}
	More generally, if $Y=n_1\cdot Y_{p_1,q_1}\#\dots \#n_k\cdot Y_{p_k,q_k}$ for integers $n_i$, $p_i$, $q_i$ 
	such that $p_i, q_i$ are coprime integers greater than $1$ and  $n_i$ is a positive integer, then:
	\begin{equation}\label{Ypq-ineq-gen}
		\min_{1\leq i\leq k}{\frac{1}{4p_iq_i(4p_iq_i-1)}} \leq 
		\Gamma_{Y}(1)\leq \min_{1\leq i\leq k}{\frac{1}{4p_iq_i(2p_iq_i-1)}}
	\end{equation}
\end{cor-intro}
This corollary is a consequence of Theorems \ref{basic-prop-2} and \ref{seifert-space-comp-thm} using the ingredients provided by \cite{KH:Wh-dble}. In \cite{KH:Wh-dble}, a lower bound for the positive values of the Chern-Simons functional on the set of flat $\SU(2)$-connections of $Y_{p,q}$ is given. This lower bound allows us to obtain the lower bound for $\Gamma_Y$ in \eqref{Ypq-ineq-gen}. The upper bound in \eqref{Ypq-ineq-gen} is verified with the aid of a negative definite cobordism from $\Sigma(p,q,2pq-1)$ to $Y_{p,q}$. In fact, Proposition \ref{CS-val-intro} below implies that $\Gamma_{Y_{p,q}}(1)$ is one of the following values and it is natural to ask which of these values are equal to $\Gamma_{Y_{p,q}}(1)$:
\[
  \frac{1}{4pq(4pq-1)}\hspace{1cm}\frac{1}{2pq(4pq-1)}\hspace{1cm}\frac{1}{4pq(2pq-1)}.
\]
Corollary \ref{whitehead-dble} can be used to conclude the following theorem proved in \cite{KH:Wh-dble}.
\begin{cor-intro}[\cite{KH:Wh-dble}] \label{whitehead-dble-2}
	The knots $\{D(T_{2,2^n-1})\}_{n\geq 2}$ are linearly independent in the smooth concordance group.
\end{cor-intro}

A stronger version of Corollary \ref{whitehead-dble-2} for a more general family of knots is proved in \cite{P:ind-br}. Analogous to Corollary \ref{whitehead-dble-2}, it is possible to reformulate the results of \cite{P:ind-br} in terms of $\Gamma_Y$. In particular, the arguments of \cite{P:ind-br} can be used to obtain information about $\Gamma_Y$ where $Y$ is the branched double cover of $S^3$, branched along one of the knots studied in \cite{P:ind-br}.

There are properties and invariants of integral homology spheres which do not respect the homology cobordism relation. But we can use $\Gamma_Y$ to show that the homology cobordism group is not completely blind to them. For instance, being an $\SU(2)$-non-degenerate integral homology sphere is not preserved by homology cobordisms. If $Y$ has an irreducible $\SU(2)$ flat connection, then $Y\#-Y$ is $\SU(2)$-degenerate and it is homology cobordant to $S^3$, an $\SU(2)$-non-degenerate integral homology sphere. Nevertheless, we have the following corollary of Theorems of \ref{basic-prop-2} and \ref{seifert-space-comp-thm}:
\begin{cor-intro}
	Suppose $\Sigma(a_1,a_2,\dots a_n)$ is a Seifert fibered space with $R(a_1,a_2,\dots a_n)\geq 5$.
	Then $\Sigma(a_1,a_2,\dots a_n)$ is not homology cobordant to an 
	$\SU(2)$-non-degenerate integral homology sphere.
	In particular, $\Sigma(a_1,a_2,\dots a_n)$ is not homology cobordant to a Brieskorn homology sphere $\Sigma(p,q,r)$.
\end{cor-intro}
\noindent
Note that there are Seifert fibered homology spheres $\Sigma(a_1,a_2,\dots a_n)$ with arbitrarily large values of $R(a_1,a_2,\dots a_n)$. (See Example \ref{large-R}.)

The Chern-Simons functional of an integral homology sphere $Y$ takes finitely many values on the space of flat $\SU(2)$-connections. The homology cobordism relation might change this set. For example, the connected sum $Y\#-Y$, which is homology cobordant to $S^3$, can take non-trivial values on the space of flat $\SU(2)$-connections. On the other hand, the following proposition implies that if $Y$ is homology cobordant to $Y'$, then the values of Chern-Simons functionals of $Y$ and $Y'$ on the space of flat connections share the set of finite values in the image of $\Gamma_Y$ and $\Gamma_{Y'}$:
\begin{prop-intro}\label{CS-val-intro}
	For any integral homology sphere $Y$ and any integer $k$, either $\Gamma_Y(k)=\infty$, 
	$\Gamma_Y(k)=0$, or there is an irreducible 
	flat connection $\alpha$ such that $\Gamma_Y(k)$ is equal to $\CS(\alpha)$ mod $\Z$. 
\end{prop-intro}

It is natural to ask whether $\Gamma_Y$ takes any irrational value. Proposition \ref{CS-val-intro} implies that if $Y$ is a linear combination of Seifert fibered homology spheres (or more generally plumbed 3-manifolds), then $\Gamma_Y$ is always rational valued. In order to find $Y$ with an irrational value in the image of $\Gamma_Y$, we firstly need to find an $\SU(2)$-flat connection $\alpha$ on an integral homology sphere such that $\CS(\alpha)$ is an irrational number. Even existence of such flat connections is an open question.

We end this part of the introduction by discussing a filtration on $\Theta_\Z^3$. Given two integral homology spheres $Y$, $Y'$, we define $Y\succeq_\Gamma Y'$ if $\Gamma_Y(1)\leq \Gamma_{Y'}(1)$. This defines a total quasi-order\footnote{A quasi-order $\succeq$ is a reflexive and transitive binary relation. A total quasi-order on a set $S$ is a quasi-order $\succeq$ on $S$ such that for any two elements $a$, $b$ of $S$ at least one of the relations $a\succeq b$ or $b\succeq a$ holds.} on $\Theta_\Z^3$. There is also a well-known quasi-order $\succeq_N$ on $\Theta_\Z^3$ where $Y\succeq_N Y'$ if there is a negative definite manifold cobordism $W$ from $Y'$ to $Y$. Theorem \ref{neg-def} implies that if $Y\succeq_N Y'$, then $Y\succeq_\Gamma Y'$. (If $W:Y\to Y'$ is a negative definite cobordism, then performing surgery on a set of loops representing a basis for $H_1(X,\R)$ gives rise to a negative definite cobordism from $Y$ to $Y'$ with vanishing $b_1$.) Theorem \ref{seifert-space-comp-thm} implies that if we pick the sequence $\{Y_k:=\Sigma(p,q,pqk-1)\}_{k}$, then for any integer $N$ we have $Y_{k}\succeq_\Gamma N\cdot Y_{k-1}$. In light of \cite{J:inf-sum}, it would be interesting to study the behavior of this filtration (or some refined version of it) with respect to connected sums of 3-manifolds. Of course, this requires a better understanding of the invariant $\Gamma_Y$ with respect to connected sums which will be investigated in \cite{D:conn-Gamma}.

\subsection{Outline of Contents}
For the purpose of this paper, we need to work with a version of instanton Floer homology which is defined with coefficients in a Novikov field. This version of instanton Floer homology is discussed in Section \ref{ins-Flo}. Subsection \ref{tilde-func} is devoted to a review of the functoriality of instanton Floer homology with respect to negative definite cobordisms with vanishing $b_1$. In \cite{Don:YM-Floer}, various {\it equivariant instanton Floer homology theories} for integral homology spheres are introduced. In Subsection \ref{equiv}, we introduce three such equivariant theories and show that they fit into an exact triangle. The construction of this exact triangle is inspired by similar objects in the context of monopole Floer homology \cite{KM:monopoles-3-man} and Heegaard Floer homology \cite{OzSz:HF}. 

The definition of $\Gamma_Y$ is given in Subsection \ref{GammaY} using the exact triangle of equivariant instanton Floer homologies. As it is explained there, one can avoid equivariant theories in the definition of $\Gamma_Y$. However, I find it more instructive to use the language of equivariant instanton Floer theories. I also believe that this approach would be more efficient in studying the behavior of $\Gamma_Y$ with respect to topological constructions such as surgery along a knot and taking connected sum. In Subsection \ref{prop-GammaY}, we verify the basic properties of $\Gamma_Y$ claimed above. In Section \ref{R-inv}, we study the relation between $\Gamma_Y$ and Fintushel and Stern's $R$ invariant. Section \ref{diag-gen} is devoted to the proof of Theorem \ref{NSIF-neg-def} and its generalizations. 

{\it Acknowledgements.} This work is partly inspired by the ongoing collaboration of the author with Kenji Fukaya. I am grateful to him for many enlightening discussions. The definition of the exact triangle of Subsection \ref{equiv} is motivated by the author's discussions with Michael Miller. I am thankful to him for explaining to me his work on instanton Floer homology of rational homology spheres. I thank Masaki Taniguchi for brining his work to the author's attention. I am also grateful to Simon Donaldson, Peter Kronheimer and Christopher Scaduto for many interesting conversations about the present work.

\section{A Review of Instanton Floer Homology}\label{ins-Flo}

\subsection{Instanton Floer Chain Complexes} \label{C-tilde}

Suppose $Y$ is an integral homology sphere and $P$ is an $\SU(2)$-bundle on $Y$, which is necessarily a trivial bundle. Let $\mathcal A(Y)$ be the space of $\SU(2)$-connections on $P$. We fix a trivialization of $P$ and denote the associated trivial connection by $\Theta$. Other connections on $P$ are given by adding elements of $\Omega^1(Y,\su(2))$ to the trivial connection. Here $\Omega^1(Y,\su(2))$ is the space of smooth 1-forms on $Y$ with coefficients in $\su(2)$, the Lie algebra of $\SU(2)$. For a connection $A=\Theta+a$, with $a$ being an element of $\Omega^1(Y,\su(2))$, the Chern-Simons functional of $A$ is defined to be:
\begin{equation*}
	\widetilde {\CS}(A):=-\frac{1}{8\pi^2}\int_Y \tr(a\wedge da+\frac{2}{3}a\wedge a\wedge a)
\end{equation*} 
Suppose $\mathcal G(Y)$ is the space of smooth automorphisms of $P$. Given an element of $g \in \mathcal G(Y)$, we can pull-back a connection using the automorphism $g^{-1}$. This determines an action of $\mathcal G(Y)$ on $\mathcal A(Y)$. An element of $\mathcal A(Y)$ is irreducible if its stabilizer with respect to the action of $\mathcal G(Y)$ is $\pm {\rm id}$. The action of each element of $\mathcal G(Y)$ changes the value of the Chern-Simons functional by an integer. Therefore, we have an induced map $\CS:\mathcal B(Y) \to \R/\Z$ where $\mathcal B(Y)$ is the quotient space $\mathcal A(Y)/\mathcal G(Y)$.

Instanton Floer homology of $Y$ can be regarded as the ``Morse homology group'' associated to the Chern-Simons functional $\CS$. The critical points of $\CS$ are represented by flat $\SU(2)$-connections on $Y$ and form a compact subspace of $\mathcal B(Y)$. The trivial connection (more precisely, the class represented by $\Theta$) is a singular element of $\mathcal B(Y)$, because the stabilizer of $\Theta$ consists of constant automorphisms of $P$. On the other hand, all non-trivial flat connections on $P$ are irreducible.
The trivial connection is a non-degenerate critical point of $\CS$, namely, Hessian of $\CS$ at $\Theta$ is non-degenerate modulo the action of the gauge group. For now, we assume that the other critical points are also non-degenerate and hence isolated. Consequently, there are only finitely many critical points of $\CS$.

We can form the analogues of downward gradient flow lines for the Chern-Simons functional. Fix two critical points $\alpha$ and $\beta$ of $\CS$ which are represented by flat connections $\widetilde \alpha, \widetilde \beta\in\mathcal A(Y)$. Suppose $A_0$ is a smooth  connection on $\R \times Y$, which is equal to the pull-backs of $\widetilde \alpha$ and $\widetilde \beta$ on the ends $(-\infty,-1]\times Y$ and $[1,\infty)\times Y$, respectively. The connection $A_0$ determines a path $z$ in $\mathcal B(Y)$ from $\alpha$ to $\beta$. Fix an integer $l\geq 3$ and define $\mathcal A_z(\alpha,\beta)$ to be the set of Sobolev connections $A$ on $\R \times Y$ such that $|\!|A-A_0|\!|_{L_l^2}< \infty$. As the notation suggests, this space depends only on the path $z$. We define the topological energy of an element $A \in \mathcal A_z(\alpha,\beta)$ to be:
\begin{equation*}
	\frac{1}{8\pi^2}\int_{\R \times Y } \tr(F(A)\wedge F(A))
\end{equation*}
The topological energy is independent of $A$, and we will denote it by $\mathcal E(z)$. It is also related to the Chern-Simons functional as follows:
\begin{equation} \label{energy-CS}
	\mathcal E(z)\equiv \CS(\alpha)-\CS(\beta) \mod \Z
\end{equation}
Suppose $\mathcal G_z(\alpha,\beta)$ denotes the group of automorphisms $g$ of the trivial $\SU(2)$-bundle on $\R \times Y$ such that $|\!|\nabla_{A_0} g|\!|_{L^2_l}<\infty$. This group acts on $\mathcal A_z(\alpha,\beta)$ and the quotient is denoted by $\mathcal B_z(\alpha,\beta)$.

Fix a metric on $Y$ and equip $\R \times Y$ with the product metric. A connection $A\in \mathcal A_z(\alpha,\beta)$ is {\it Anti-Self-Dual}, if the self-dual part of its curvature, $F^+(A)$, vanishes. The path $\{A|_{\{t\}\times Y}\}_{t\in \R}$, for an ASD connection $A$, can be regarded as the downward gradient flow line of the Chern-Simons functional where we use the following metric on $\mathcal A(Y)$: 
\begin{equation*}
	\hspace{3cm}\langle a,b \rangle:=\frac{1}{8\pi^2}\int_{Y } -\tr(a\wedge *b) \hspace{1cm} a,\,b \in \Omega^1(Y,\su(2))
\end{equation*}
The ASD equation is invariant with respect to the action of $\mathcal G_z(\alpha,\beta)$ and the quotient space of the space of solutions is denoted by $\mathcal M_z(\alpha,\beta)$. The space $\mathcal M_z(\alpha,\beta)$ is non-empty only if $\mathcal E(z)>0$ or $z$ is homotopic to the constant path. In the latter case, $\mathcal E(z)=0$ and $\alpha=\beta$. For now, we assume that solutions of the ASD equation are {\it regular}. That is to say, they are cut down transversely by the ASD equation. Then $\mathcal M_z(\alpha,\beta)$ is an orientable smooth manifold. 

To any $\alpha$ in $\mathcal B(Y)$, we can associate $\deg^+(\alpha), \deg^-(\alpha) \in \Z/8\Z$ such that \cite{Fl:I,Don:YM-Floer}:
\begin{equation} \label{L^2-met}
	\dim(\mathcal M_z(\alpha,\beta)) \equiv \deg^+(\alpha)-\deg^-(\beta) \mod 8
\end{equation}
Moreover, $\deg^+(\Theta)=-3$, $\deg^-(\Theta)=0$, and $\deg^+(\alpha)=\deg^-(\alpha)$ for an irreducible flat connection $\alpha$ (under the standing assumption that the irreducible flat connections are non-degenerate). The grading on the elements of $\mathcal B(Y)$ defined by $\deg^-$ is called the {\it Floer grading}. Translation along the $\R$ direction gives an action of $\R$ on $\mathcal M_z(\alpha,\beta)$. Unless $\alpha=\beta$ and $z$ is homotopic to the constant path, this action is free and $\breve {\mathcal M}(\alpha,\beta)$ denotes the quotient space with respect to the $\R$-action.

Suppose $\Lambda$ is the following field:
\begin{equation*}
	\Lambda:=\{\sum_i a_i \lambda^{r_i}\mid a_i \in \Q,\, r_i \in \R, \, \lim_{i \to \infty} r_i=\infty\}
\end{equation*}
and $C_*(Y)$ is the $\Lambda$-module generated by non-trivial flat connections on $Y$. There is also an endomorphism of $C_*(Y)$ denoted by $d$ and defined as below:
\begin{equation} \label{boundary}
	d (\alpha):=\sum_{z:\alpha \to \beta} \#\breve {\mathcal M}_z(\alpha,\beta) \cdot \lambda^{\mathcal E(z)}\beta
\end{equation}
where $\alpha$ and $\beta$ are generators of $C_*(Y)$ and $z$ is a path from $\alpha$ to $\beta$. In the above expression, $ \#\breve {\mathcal M}(\alpha,\beta)$ denotes the signed number of the points in $\breve {\mathcal M}(\alpha,\beta)$, with the understanding that this sum is non-zero only if the moduli space is 0-dimensional. In the case that $\breve {\mathcal M}(\alpha,\beta)$ is 0-dimensional, this moduli space is compact and hence it consists of finitely many points. Moreover, for a general path $z$, the moduli space ${\mathcal M}_z(\alpha,\beta)$ is orientable. However, to make \eqref{boundary} rigorous, we need to fix an orientation of ${\mathcal M}_z(\alpha,\beta)$ for each $\alpha$, $\beta$ and $z$. We refer to \cite{Don:YM-Floer} for a consistent way of fixing orientations for these moduli spaces. The flat connection $\beta$ contributes to the sum \eqref{boundary} only if $\deg^-(\alpha)-\deg^-(\beta)=1$. Therefore, the degree of $d$ with respect to the Floer grading is equal to $-1$. The subspace of the elements of degree $i$ in $C_*(Y)$ is denoted by $C_i(Y)$.

The moduli spaces of ASD connections asymptotic to the trivial connection on one end can be utilized to construct two other maps. Following \cite{Don:YM-Floer}, let $D_1:C_*(Y) \to \Lambda$ and $D_2:\Lambda \to C_*(Y)$ be $\Lambda$-homomorphisms defined as follows:
\begin{equation*}
	D_1(\alpha):=\sum_{z:\alpha \to \Theta}\#\breve {\mathcal M}_z(\alpha,\Theta) \cdot \lambda^{\mathcal E(z)}\hspace{1cm}
	D_2(1):=\sum_{z:\Theta \to \beta}\#\breve {\mathcal M}_z(\Theta,\beta) \cdot \lambda^{\mathcal E(z)} \beta
\end{equation*}
From the definition, it is clear that $D_1$ is non-zero only on $C_1(Y)$, and $D_2$ takes values in the summand $C_4(Y)$ of $C_*(Y)$. 

There is another interesting operator $U$ which acts on $C_*(Y)$. The map $U$ is defined with the same formula as \eqref{boundary} except that $\breve{\mathcal M}_z(\alpha,\beta)$ is replaced with $\mathcal N_z(\alpha,\beta)$, a co-dimension 4 submanifold of $\mathcal M_z(\alpha,\beta)$,
\begin{equation} \label{U}
	U (\alpha):=\sum_{z:\alpha \to \beta} -\frac{1}{2}\#\mathcal N_z(\alpha,\beta) 
	\cdot \lambda^{\mathcal E(z)}\beta.
\end{equation}
The factor $-\frac{1}{2}$ does not have a great significance and is included to simplify subsequent formulas. Each homology class $\sigma\in H_i(Y)$ gives rise to a cohomology class $\mu(\sigma)\in H^{4-i}(\mathcal B_z(\alpha,\beta))$ \cite{DK,Don:YM-Floer}. The submanifold $\mathcal N_z(\alpha,\beta)$ of $\mathcal M_z(\alpha,\beta)$ can be regarded as a subspace of $\mathcal M_z(\alpha,\beta)$ representing the cohomology class $\mu(4\cdot x)$ where $x$ is the generator of $H_0(Y)$. To be more specific, let $\mathcal B^*((-1,1)\times Y)$ be the configuration space of irreducible $\SU(2)$-connections on $(-1,1)\times Y$ with finite $L^2_l$ norms. Associated to the base point $(0,y_0)\in (-1,1)\times Y$, there is a base point fibration ${\mathbb E}$ on $\mathcal B^*((-1,1)\times Y)$. We fix two sections $s_1$, $s_2$ of the complexified bundle ${\mathbb E}\otimes \C$ and define $\mathcal N_z(\alpha,\beta)$ as follows:
\begin{equation} \label{N-z}
  \mathcal N_z(\alpha,\beta):=\{[A]\in \mathcal M_z(\alpha,\beta)\mid \text{$s_1(r([A]))$ and $s_2(r([A]))$
  are linearly dependent.}\}
\end{equation}
Here $r:\mathcal M_z(\alpha,\beta)\to \mathcal B^*((-1,1)\times Y)$ is given by the restriction of a connection to $(-1,1)\times Y$ and is well-defined because of unique continuation. We also assume that $s_1$ and $s_2$ are chosen generically such that $\mathcal N_z(\alpha,\beta)$ is cut down transversely. We refer the reader to \cite{Don:YM-Floer} and \cite{Fro:h-inv} for more details\footnote{The corresponding operator in \cite{Don:YM-Floer} is equal to $-\frac{1}{4}U$ and the corresponding operator in \cite{Fro:h-inv} is $-2U$.}. By definition, $U$ has degree $-4$ because $\mathcal N_z(\alpha,\beta)$ is 0-dimensional only if $\deg^-(\alpha)-\deg^-(\beta)=4$.

Let $\widetilde C_*(Y):=C_*(Y)\oplus \Lambda \oplus C_{*-3}(Y)$. Here $C_{*-3}(Y)$  is the same $\Lambda$-module as $C_*(Y)$ whose grading is shifted up by $3$. The summand $\Lambda$ is in correspondence with the trivial connection and we define its Floer grading to be 0. The above maps can be combined to form an operator $\widetilde d:\widetilde C_*(Y) \to \widetilde C_*(Y)$:
\begin{equation}\label{shape-SO-com}
	\left[
	\begin{array}{ccc}
		d&0&0\\
		D_1&0&0\\
		U&D_2&-d 
	\end{array}
	\right]
\end{equation}
\begin{prop}
	The map $\widetilde d$ defines a differential of degree $-1$ on $\widetilde C$.
\end{prop}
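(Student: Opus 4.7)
The plan is to first verify that each matrix entry has Floer degree $-1$, and then to expand $\widetilde d^2$ as a matrix product and identify the four chain-level identities that must be verified. The degree count goes as follows: $d$ and $-d$ on $C_*(Y)$ clearly have degree $-1$; $D_1$ is supported on $C_1(Y)$ and lands in $\Lambda$, which by convention is placed in degree $0$; $D_2$ takes $\Lambda$ into $C_4(Y)$, and under the grading shift realizing $C_4(Y)$ as a subspace of $C_{*-3}(Y)$ this sits in degree $7\equiv -1$ mod $8$; and $U$, which has degree $-4$ with respect to the Floer grading, sends $C_i(Y)$ into $C_{i-4}(Y)\subset C_{*-3}(Y)$, which sits in degree $i-1$. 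Hence every non-zero entry has degree $-1$, as required.

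Carrying out the matrix multiplication $\widetilde d\circ \widetilde d$ reduces $\widetilde d^2 = 0$ to the following four identities:
\begin{equation*}
d^2 = 0,\qquad D_1\, d = 0,\qquad d\,D_2 = 0,\qquad dU - Ud = D_2\, D_1.
\end{equation*}
Each identity is established by analyzing the Uhlenbeck compactification of a suitable $1$-dimensional moduli space and identifying the boundary strata with the operator compositions appearing above. Under the standing non-degeneracy and regularity hypotheses, energy bounds imply the compactified moduli spaces are compact $1$-manifolds with boundary, so the algebraic count of boundary points is zero; this translates directly into the desired algebraic identities over $\Lambda$ once we match boundary strata with the right compositions.

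More concretely: for $d^2 = 0$, one studies the $1$-dimensional components of $\breve{\mathcal M}_z(\alpha,\gamma)$ for irreducible flat connections $\alpha$, $\gamma$ with $\deg^-(\alpha)-\deg^-(\gamma)=2$; the boundary consists of broken trajectories through an intermediate irreducible flat connection $\beta$, and one checks (using the grading shifts $\deg^\pm(\Theta)=-3,0$) that no broken trajectory through $\Theta$ can arise in this dimension. For $D_1\, d = 0$ and $d\,D_2 = 0$, one performs the same analysis for $\breve{\mathcal M}_z(\alpha,\Theta)$ and $\breve{\mathcal M}_z(\Theta,\beta)$ of dimension $1$; again the only boundary contributions come from broken trajectories at irreducible flats, yielding the stated compositions.

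The main obstacle is the commutator identity $dU - Ud = D_2\, D_1$, which is precisely the reason one must enlarge the complex by the $\Lambda$-summand. To prove it, one considers the $1$-dimensional moduli space cut out by the divisor $\mathcal N_z(\alpha,\gamma)$ inside $\mathcal M_z(\alpha,\gamma)$ for irreducible $\alpha,\gamma$ with $\deg^-(\alpha)-\deg^-(\gamma)=5$. Two types of boundary strata appear in its compactification. The first are broken trajectories $\alpha\to\beta\to\gamma$ through an irreducible $\beta$, where the divisor defining $\mathcal N_z$ can concentrate on either factor; summed with the usual signs these yield $Ud - dU$. The second are broken trajectories through the reducible $\Theta$: the pair of components has expected dimensions $\dim\mathcal M_{z_1}(\alpha,\Theta)=\deg^-(\alpha)+3$ and $\dim\mathcal M_{z_2}(\Theta,\gamma)=-3-\deg^-(\gamma)$, whose sum is $\deg^-(\alpha)-\deg^-(\gamma)=5$; after dividing out $\R$-translations on both factors, the $4$-dimensional jump in virtual dimension at $\Theta$ is exactly absorbed by the codimension-$4$ constraint cutting out $\mathcal N_z$, so the $\mu$-class is ``used up'' at the reducible and these strata produce a $0$-dimensional contribution counted by $D_2\, D_1$. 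The balance between these two types of boundary gives $Ud - dU + D_2\, D_1 = 0$, completing the proof. The delicate point is to show that the gluing theory at the singular connection $\Theta$ identifies these ends of the moduli space with the product count $D_2\, D_1$ with the correct sign, for which we rely on the orientation conventions of \cite{Don:YM-Floer}.
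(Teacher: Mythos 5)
Your proposal takes essentially the same route as the paper, which simply defers to \cite{Don:YM-Floer} and \cite{Fro:h-inv} and notes the one new ingredient needed to pass from $\Q$ to $\Lambda$-coefficients. You spell out the content of those references correctly: the degree count, the matrix expansion of $\widetilde d^2$ into the four identities $d^2=0$, $D_1 d=0$, $dD_2=0$, $dU-Ud=D_2D_1$, and the boundary-of-one-dimensional-moduli-space argument for each. Two small points deserve correction, however.

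First, the dimension bookkeeping at the reducible is off. With the paper's conventions, $\dim\mathcal M_{z_1}(\alpha,\Theta)\equiv\deg^+(\alpha)-\deg^-(\Theta)=\deg^-(\alpha)$ (not $\deg^-(\alpha)+3$), and $\dim\mathcal M_{z_2}(\Theta,\gamma)\equiv\deg^+(\Theta)-\deg^-(\gamma)=-3-\deg^-(\gamma)$, so the two indices sum to $\deg^-(\alpha)-\deg^-(\gamma)-3$. The ``missing'' $3$ reappears as the $\SO(3)$-gluing parameter at $\Theta$ (coming from its stabilizer), not as a $4$-dimensional jump absorbed by the constraint $\mathcal N_z$. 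The evaluation of the $\mu$-class on that $\SO(3)$ link is precisely what produces the $D_2D_1$ count; saying the $\mu$-class is ``used up'' at the reducible is the right heuristic, but your dimension count as written double-counts.

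Second, you never address the single point the paper explicitly singles out as the one needing verification for the present statement: because the differential is weighted by $\lambda^{\mathcal E(z)}$, the boundary strata only produce the operator compositions over $\Lambda$ if $\mathcal E(z_1\# z_2)=\mathcal E(z_1)+\mathcal E(z_2)$ for concatenations. This additivity of topological energy along broken trajectories is exactly what guarantees, for instance, that the coefficient $\lambda^{\mathcal E(z)}$ on a boundary component of a $1$-dimensional moduli space factors as $\lambda^{\mathcal E(z_1)}\lambda^{\mathcal E(z_2)}$ and hence matches the $\Lambda$-bilinear composition $D_2D_1$ (and similarly for the other three identities). Without noting this, the argument only gives the $\Q$-coefficient statement of \cite{Don:YM-Floer}, not the $\Lambda$-coefficient statement being proved.
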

\begin{proof}
	The same proposition with the coefficient ring $\Q$ is proved in \cite{Don:YM-Floer}. (See also \cite{Fro:h-inv}.)
	The argument there can be easily adapted to our case where the coefficient ring is $\Lambda$ 
	and the differential is weighted by powers of $\lambda$. 
	We only need to observe that the topological energy of the concatenation of two paths $z_1$ and 
	$z_2$ in $\mathcal B(Y)$ is equal to the sum of the topological energies.
\end{proof}

Define the {\it minimal-degree} of a non-zero element $\eta=\sum_i a_i \lambda^{r_i}$ of $\Lambda$ to be $\min_{i}\{r_i\}$. We write $\mdeg(\eta)$ for this real number. We can also extend the minimal-degree to the case that $\eta=0$ by requiring that $\mdeg(0)=\infty$. The minimal-degree of $(\eta_1,\dots,\eta_k)\in \Lambda^k$ is defined as:
\begin{equation*}
	\mdeg(\eta_1,\dots,\eta_k):=\min_{i}\{\mdeg(\eta_i)\}
\end{equation*}
In particular, $\mdeg$ can be defined on $\widetilde C_*(Y)$.  The differential $\widetilde d$ increases the minimal degree, because the moduli spaces involved in the definition of this differential are non-empty only if the energy of the corresponding path is positive.

One often faces with homology spheres $Y$ that the Chern-Simons functional has degenerate critical points or the moduli spaces $\mathcal M_z(\alpha,\beta)$ are not regular. In these cases, we need to perturb the Chern-Simons functional. The classical choice of such perturbations are given by {\it holonomy perturbations} \cite{Don:ori,Tau:Cas-inv,Fl:I,Don:YM-Floer,K:higher, KM:yaft}. Without going into details of holonomy perturbations, we explain what such perturbations provide for us. Here we follow the approach in \cite{Don:YM-Floer,KM:yaft}. For each 3-manifold $Y$, one can define a family of holonomy perturbations parametrized by a Euclidean space $\mathcal P$: to each $\pi \in \mathcal P$, one can associate a bounded function $f_\pi: \mathcal B(Y) \to \R$ and this association is a linear map from $\mathcal P$ to the set of continuous maps. If $\alpha$ and $\beta$ are two critical points of $\CS+f_\pi$ and $z$ is a path from $\alpha$ to $\beta$, then the moduli space of downward gradient flow lines from $\alpha$ to $\beta$ is denoted by $\mathcal M^\pi_z(\alpha,\beta)$. More precisely, $\mathcal M^\pi_z(\alpha,\beta)$ consists of the gauge equivalence classes of connections $A \in \mathcal A_z(\alpha,\beta)$ such that:
\begin{equation*}
	F^+(A)+(dt\wedge \nabla_{A_t} f_\pi)^+=0
\end{equation*}
where $A_t:=A|_{\{t\}\times Y}$ and $\nabla_{A_t} f_\pi$ is the formal gradient of $f_\pi$ at the connection $A_t$. 
\begin{prop}[\cite{Don:YM-Floer,KM:yaft}] \label{regular-pert}
	For any positive real number $\epsilon$, there is an element $\pi\in \mathcal P$ 
	such that the following properties hold:
	\vspace{-5pt}
	\begin{itemize}
		\item[(i)] $|\pi|_{\mathcal P}< \epsilon$;
		\item [(ii)] the non-trivial critical points of the functional $\CS+f_\pi$ are non-degenerate and irreduicble;
		\item [(iii)] for any two critical points $\alpha$ and $\beta$ of $\CS+f_\pi$ and any path $z$ with $\ind(z)<8$, 
		the moduli space $\mathcal M^\pi_z(\alpha,\beta)$ consists of regular solutions, i.e., 
		any element of this space is cut down transversely;
		\item [(iv)] { if a flat connection is already non-degenerate,
		then we can assume that $f_{\pi}$ vanishes in a neighborhood of this 
		flat connection.}
	\end{itemize}
	
\end{prop}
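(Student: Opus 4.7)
The proof is a standard Sard–Smale argument in the framework of holonomy perturbations, following the templates of \cite{Don:YM-Floer} and \cite{KM:yaft}. I sketch how I would organize it.

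First I would set up the Banach space $\mathcal P$ of perturbations. Fix a countable collection of immersed loops $\{\gamma_i\}$ in $Y$ equipped with thickenings $\gamma_i\colon S^1\times D^2\hookrightarrow Y$, together with a countable dense collection of cutoff functions and invariant functions $h\colon \SU(2)\to\R$. The perturbation $f_\pi$ is a weighted sum of $\int_{D^2} h(\mathrm{Hol}_{\gamma_i(\cdot,x)}(A))\,\mu(x)$, and one chooses weights lying in a Banach space of rapidly decreasing sequences, tuned so that every $f_\pi$ is smooth on $\mathcal A(Y)$ with uniformly controlled $C^k$ norms and so that the family is rich enough to realize essentially arbitrary variations of the functional on any bounded region of $\mathcal A(Y)$. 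This is the machine recalled in \cite{KM:yaft}.

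Next I would prove (ii) and (iv) together. By property (i) of the perturbation family, the universal zero set $\{(A,\pi):\nabla(\CS+f_\pi)(A)=0\}$ in $\mathcal B(Y)\times\mathcal P$ is a Banach manifold away from the trivial connection, because the linearization in the $\mathcal P$-direction surjects onto the tangent space at any irreducible connection (this uses density of holonomy perturbations, i.e.\ that their differentials separate covectors in $T_A\mathcal B(Y)^*$, which in turn rests on unique continuation and the Stone–Weierstrass-type statement for traces of holonomies). Applying Sard–Smale to the projection to $\mathcal P$ yields a residual set of small $\pi$ for which all non-trivial critical points are non-degenerate. Irreducibility of all non-trivial critical points is arranged by a separate transversality argument, using that reducibles form a positive-codimension locus and perturbing them away. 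To obtain (iv), multiply the perturbation weights by a cutoff supported away from the already non-degenerate flat connections; since non-degeneracy is an open condition, this cutoff can be chosen smoothly in $\mathcal B(Y)$ and does not destroy the transversality of the remaining critical points.

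For (iii), I would run the parametrized transversality argument on the moduli spaces $\mathcal M^\pi_z(\alpha,\beta)$. One introduces the universal moduli space $\bigsqcup_{z}\mathcal M_z(\alpha,\beta)\times \mathcal P$ cut out by the perturbed ASD equation with appropriate weighted Sobolev norms (to handle the cylindrical ends). The linearization at a solution $(A,\pi)$ splits as the usual ASD deformation operator plus the derivative in $\pi$; surjectivity of the combined operator again follows from unique continuation on $\R\times Y$ together with the fact that the restriction of a non-constant ASD trajectory to any finite slab is irreducible, so the holonomy perturbations generate a dense subspace of the cokernel. Sard–Smale then produces, for each pair $(\alpha,\beta)$ and each path class $z$, a residual set of $\pi$ for which $\mathcal M^\pi_z(\alpha,\beta)$ is regular. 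The condition $\ind(z)<8$ bounds the energy of the trajectories in question (via \eqref{energy-CS}) and hence bounds the number of relevant homotopy classes $z$ after quotienting by translations; combined with the compactness of the set of critical points, this reduces the transversality requirement to a countable intersection of residual sets, which remains residual. Intersecting with the ball $|\pi|_{\mathcal P}<\epsilon$ and with the residual sets from (ii) and (iv) yields the desired $\pi$.

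The main obstacle is the surjectivity of the $\mathcal P$-derivative of the perturbed ASD operator: one must show that the holonomy perturbations generate a dense subspace in the cokernel, and this is the place where unique continuation for ASD connections on $\R\times Y$ and a careful choice of the Banach space of perturbation weights are both essential. Once this is in place, the rest is a standard assembly of Sard–Smale and a countable-intersection argument; the full details for the unperturbed ring $\Q$ are in \cite{Don:YM-Floer,KM:yaft}, and the passage to coefficients in $\Lambda$ plays no role in the analysis since the differential only weights existing counts by $\lambda^{\mathcal E(z)}$.
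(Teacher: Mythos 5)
The paper does not actually supply a proof of this proposition: it is stated with an attribution to \cite{Don:YM-Floer} and \cite{KM:yaft} precisely because the author has chosen not to repeat the analytic details of holonomy perturbations. So there is no in-paper argument to compare against, and your sketch should be judged on how faithfully it reproduces the standard argument in those references. On that score it is in the right spirit, and the organizational spine (Banach space of holonomy data, parametrized transversality plus Sard--Smale for both the critical-point equation and the flow equation, unique continuation feeding surjectivity of the $\mathcal P$-derivative, and a countable-intersection argument keyed to the index bound) is correct.

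Two points deserve tightening. First, your treatment of irreducibility in (ii) invokes a separate transversality argument that ``perturbs away'' a positive-codimension locus of reducibles. For an integral homology sphere this is not the mechanism at work, and a transversality argument would not by itself rule out small reducible critical points near $\Theta$. The correct reasoning is soft: since $H^1(Y;\R)=0$, the trivial connection is already a non-degenerate critical point of the unperturbed $\CS$, and a compactness argument shows that for $|\pi|_{\mathcal P}$ small every critical point of $\CS+f_\pi$ is either close to $\Theta$ or close to an irreducible flat connection. The non-degeneracy of $\Theta$ then forces the only nearby critical point to be $\Theta$ itself (the perturbation fixes $\Theta$ because holonomy perturbations are constant to second order along the reducible stratum through it), and critical points near irreducibles inherit irreducibility from openness of that condition. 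This is exactly why the statement singles out ``non-trivial'' critical points. Second, your justification of property (iv) by multiplying perturbation weights by a cutoff is the right idea, but you should be explicit that the cutoff is built into the \emph{parametrizing family} $\mathcal P$ from the outset rather than applied after the fact, so that the Sard--Smale argument is run in a family of perturbations that all vanish in a fixed $L^2$-neighborhood of the already non-degenerate flat connections; otherwise applying a cutoff a posteriori can create new critical points inside the cutoff region and the transversality conclusion must be re-established. With those two adjustments your outline matches the argument the paper is leaning on.
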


Fix a positive real number $\epsilon$, and let $f_\pi$ be a perturbation given by Proposition \ref{regular-pert}. The trivial connection is always a critical point of $\CS+f_\pi$, and if $\epsilon$ is small enough, all the other critical points are irreducible. Any such $\pi$ is called an {\it $\epsilon$-admissible} perturbation. We also say a perturbation is admissible if it is an $\epsilon$-admissible perturbation for some $\epsilon$. By replacing $\CS$ with $\CS+f_\pi$, we can construct a chain complex $(\widetilde C_*^\pi(Y), \widetilde d)$ in an analogous way. The chain group can be also equipped with the Floer grading and $\mdeg$. The differential decreases the Floer grading by $-1$. However, the differential $ \widetilde d$ does not necessarily increase the minimal-degree anymore. Nevertheless, there is an upper bound in terms of $\epsilon$ on how much $\widetilde d$ decreases $\mdeg$. If the moduli space $\mathcal M^\pi_z(\alpha,\beta)$ is non-empty, then:
\begin{equation*}
	\mathcal E(z)+f_\pi(\alpha)-f_\pi(\beta)\geq 0.
\end{equation*}
This inequality holds because an element of $\mathcal M^\pi_z(\alpha,\beta)$ can be regarded as a downward gradient flow line of $\CS+f_\pi$. If $\epsilon$ is small enough, we can assume that the $C^0$ norm of $f_{\pi}$ is less than a given positive number $\delta$. Therefore, $\mathcal E(z)\geq-2\delta$, which implies that the differential $\widetilde d$ decreases the minimal-degree on $\widetilde C_*^\pi(Y)$ by at most $2\delta$.

\subsection{Cobordism Maps}\label{tilde-func}
In this subsection, we discuss the functorial properties of instanton Floer homology with respect to cobordisms. Suppose $Y$ , $Y'$ are two integral homology spheres with base points, and $\pi$, $\pi'$ are $\epsilon$-admissible perturbations of the Chern-Simons functional on $Y$, $Y'$, respectively. The associated chain complexes to these perturbations are denoted by $(\widetilde C_*^{\pi}(Y),\widetilde d)$ and $(\widetilde C_*^{\pi'}(Y'),\widetilde d)$. Let $W: Y \to Y'$ be a cobordism with a choice of a path between the base points of $Y$, $Y'$ such that { $b^+(W)=0$, $H_1(W,\Z)=0$}. Then we can define a chain map $\widetilde C_W:\widetilde C_*^{\pi}(Y) \to \widetilde C_*^{\pi'}(Y')$, which has the following matrix form with respect to the standard filtrations of $\widetilde C_*^{\pi}(Y)$ and $\widetilde C_*^{\pi'}(Y')$:
\begin{equation}\label{total-cob-map}
	\left[
	\begin{array}{ccc}
		\varphi(W)&0&0\\
		\Delta_1(W)&{ 1}&0\\
		\mu(W)&\Delta_2(W)&\varphi(W)
	\end{array}
	\right]
\end{equation}
In the case that the choice of the cobordism $W$ is clear from the context, we drop $W$ from our notation and denote the above maps with $\varphi$, $\Delta_1$, $\Delta_2$ and $\mu$.

As in the case of of the differentials of the Floer complexes, these maps are defined by the moduli space of ASD connections. Firstly, we need to fix a Riemannian metric on $W$. We assume that this metric is the product metric in a collar neighborhood of its boundary corresponding to the chosen metrics on $Y$ and $Y'$. Let also $W^+$ be the non-compact manifold that is given by gluing the cylinders $(-\infty,0] \times Y$ and $[0,\infty) \times Y'$ with the product metric to $W$.

Let $\alpha$, $\alpha'$ be respectively generators of $C_*^{\pi}(Y)$, $C_*^{\pi'}(Y')$ and fix connections representing these elements of $\mathcal B(Y)$, $\mathcal B(Y')$. Let $A$ be a connection on the trivial $\SU(2)$-bundle over $W^+$ which is equal to the pull back of the chosen representatives on the cylindrical ends. If $A'$ is another connection with the similar properties, then we say $A$ and $A'$ are equivalent to each other if there is an automorphism $g$ of the trivial $\SU(2)$-bundle such that $g^*(A)-A'$ is compactly supported. An equivalence class of this relation is called a {\it path form $\alpha$ to $\alpha'$ along $W$}. The fundamental group of $\mathcal B(Y)$ (resp. $\mathcal B(Y')$), based at the connection $\alpha$ (resp. $\alpha'$), acts faithfully and transitively  on the space of paths from $\alpha$ to $\alpha'$ by concatenation. The topological energy of a path $z$, represented by a connection $A$, is defined to be:
\begin{equation*}
	\mathcal E(z):=\frac{1}{8\pi^2}\int_{W^+} \tr(F(A)\wedge F(A)).
\end{equation*}
This energy is well-defined and only depends on $z$. Moreover, we have the following generalization of \eqref{energy-CS}:
\begin{equation} \label{energy-CS-cob}
	\mathcal E(z)\equiv \CS(\alpha)-\CS(\alpha') \mod \Z
\end{equation}

For a path $z$ from $\alpha$ to $\alpha'$ represented by a connection $A_0$, define $\mathcal A_z(W; \alpha,\alpha')$ to be the space of $\SU(2)$-connections on $W^+$ such that $|\!|A-A_0|\!|_{L^2_l}<\infty$. Let $\mathcal G_z(W;\alpha,\alpha')$ be the group of the automorphisms $g$ of the trivial $\SU(2)$-bundle such that $|\!|\nabla_{A_0} g|\!|_{L^2_l}<\infty$. The group $\mathcal G_z(W;\alpha,\alpha')$ acts on $\mathcal A_z(W; \alpha,\alpha')$ and the quotient space is denoted by $\mathcal B_z(W; \alpha,\alpha')$. We consider perturbations of the ASD equation on the space $\mathcal B_z(W; \alpha,\alpha')$ which is compatible with the perturbations $\pi$ and $\pi'$. This equation has the following form: 
\begin{equation} \label{ASD-cob}
	F^+(A)+G_0(A)+G_1(A)=0.
\end{equation}
The term $G_0(A)$ is defined using $\pi$ and $\pi'$ in the following way:
\begin{equation*}
	G_0(A):=\phi(t)(dt\wedge \nabla_{A_t} f_{\pi})^++\phi'(t)(dt\wedge\nabla_{A_t} f_{\pi'})^+
\end{equation*}
where $\phi$ (respectively, $\phi'$) is a smooth function on $\R$ that is equal to $1$ on $(-\infty,-2]$ (respectively, $[2,\infty)$) and equal to $0$ on $[-1,0]$ (respectively, $[0,1]$). The functions $\phi$ and $\phi'$ clearly determine functions on $W^+$ which are respectively supported on the incoming end and the outgoing end of $W^+$. The term $G_1(A)$ in \eqref{ASD-cob} is a secondary holonomy perturbation which is supported on a compact subspace of $W^+$. The parametrizing space for this secondary holonomy perturbation is a Euclidean space $\overline {\mathcal P}$. Let $\mathcal M^{\overline \pi}_z(W;\alpha,\alpha')$ be the moduli space of solutions to the equation in \eqref{ASD-cob}.

As in the case of cylinders, we can associate an integer $\ind(z)$ to the path $z$, which is the expected dimension of the moduli space $\mathcal M^{\overline \pi}_z(W;\alpha,\alpha')$. This integer is defined to be the index of the linearization of the equation in \eqref{ASD-cob} modulo the action of the gauge group and it satisfies the following identity:
\begin{equation} \label{dim-mod-8}
	\ind(z) \equiv \deg^+(\alpha)-\deg^-(\alpha') \mod 8
\end{equation}
The maps involved in the cobordism map $\widetilde C_W$ are defined using low dimensional moduli spaces of the form $\mathcal M^{\overline \pi}_z(W;\alpha,\alpha')$ where $\overline \pi$ is chosen such that all these moduli spaces are cut down transversely: 
\begin{prop}[\cite{Don:YM-Floer}]\label{sec-per}
	Given a positive real number $\epsilon$, there is a secondary perturbation term 
	$\overline \pi\in \overline {\mathcal P}$ such that 
	$|\overline \pi|<\epsilon$ and all the moduli space $\mathcal M^{\overline \pi}_z(W;\alpha_0,\alpha_1)$ with 
	${\rm index}(z)<8$ consists of regular solutions, i.e., 
	any element of these spaces is cut down transversely.  
\end{prop}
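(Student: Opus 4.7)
The plan is to follow the standard Freed--Uhlenbeck/Sard--Smale parametrized transversality scheme, applied to the family of secondary holonomy perturbations $\overline{\mathcal P}$. First I would form, for each quadruple $(\alpha_0,\alpha_1,z)$ with $\ind(z)<8$, the \emph{universal moduli space}
\[
\mathcal M_z^{\mathrm{univ}}(W;\alpha_0,\alpha_1) \;:=\; \bigl\{(A,\overline\pi)\in \mathcal B_z(W;\alpha_0,\alpha_1)\times \overline{\mathcal P}\;\bigl|\; F^+(A)+G_0(A)+G_1^{\overline\pi}(A)=0 \bigr\},
\]
and show that at every irreducible solution $(A,\overline\pi)$ the linearization of the perturbed ASD equation \emph{jointly} in $A$ and $\overline\pi$ is surjective. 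Granted this, $\mathcal M_z^{\mathrm{univ}}$ is a smooth Banach manifold near irreducibles, the projection to $\overline{\mathcal P}$ is Fredholm, and Sard--Smale furnishes a residual set of regular values $\overline\pi$ inside the ball of radius $\epsilon$ about $0$ in $\overline{\mathcal P}$.

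The main analytic input is therefore the \emph{density} statement: the variations of $G_1$ at a fixed irreducible $A$ span, modulo the image of the linearized ASD operator, a dense subspace of the cokernel. This is the cobordism analogue of the cylindrical statement underlying Proposition \ref{regular-pert}, and it is proved exactly as in Donaldson's \cite{Don:YM-Floer} and in Kronheimer--Mrowka \cite{KM:yaft}: because $A$ is irreducible, unique continuation implies that at almost every point $x$ in the interior of $W$ the evaluation of the curvature along a suitable family of loops based at $x$ separates cokernel elements, and holonomy perturbations supported in small disjoint balls realize all such variations. I would also record that $G_1^{\overline\pi}$ can be chosen so that its $C^0$ norm is controlled linearly by $|\overline\pi|$, so the smallness condition $|\overline\pi|<\epsilon$ is preserved.

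To pass from ``generic for one $z$'' to ``generic for all relevant $z$ simultaneously'' I would invoke Baire category. For fixed energy bound $E$, Uhlenbeck compactness together with \eqref{energy-CS-cob} shows there are only finitely many homotopy classes $z$ and pairs $(\alpha_0,\alpha_1)$ of critical points giving a non-empty moduli space with energy below $E$. The index constraint $\ind(z)<8$ combined with \eqref{dim-mod-8} and the weak compactness (broken trajectories have non-negative energy drop under the perturbation once $|\overline\pi|$ is small) bounds the energy of all \emph{strata} that can appear in the closure of an index${}<8$ stratum, so only countably many strata are relevant. The countable intersection of the residual sets produced by Sard--Smale for each stratum is again residual, hence non-empty in the $\epsilon$-ball, and any element of it gives the desired $\overline\pi$.

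The main obstacle is handling reducibles on $W^+$. Such solutions can only arise when $\alpha_0=\alpha_1=\Theta$ and correspond to $U(1)$ ASD connections parametrized by classes $c\in H^2(W;\Z)$ with $c^2\leq 0$; since $W$ is negative definite with $H_1(W)=0$, these form a discrete set. At a reducible, a holonomy perturbation supported along loops in $W$ commutes with the reduction and therefore cannot kill the $H^0$-component of the cokernel, so transversality in the naive sense fails there. I would treat these reducibles separately, following \cite{Don:YM-Floer,Fro:h-inv}: observe that every such reducible is cut out cleanly in the $U(1)$ locus, compute its contribution to the virtual dimension via the standard index formula, and note that the condition $\ind(z)<8$ together with $b^+(W)=0$, $H_1(W)=0$ forces at most finitely many reducibles, each rigid in the $U(1)$ slice. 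Perturbing $\overline\pi$ further within the residual set from the previous step makes every irreducible solution regular while leaving the (already isolated, transverse in the $U(1)$ sense) reducibles undisturbed, which is what one means by ``cut down transversely'' in this context.
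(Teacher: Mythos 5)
The paper offers no proof of this proposition: it is cited directly from \cite{Don:YM-Floer}, so there is no in-paper argument for your sketch to be compared against. Your proposal reconstructs the standard parametrized Sard--Smale scheme that is indeed the approach taken in the cited reference (and in \cite{KM:yaft} for the cylindrical analogue): form the universal moduli space, prove surjectivity of the joint linearization at irreducibles via the density of holonomy-perturbation variations (using unique continuation), invoke Sard--Smale on each Fredholm projection, and take a Baire intersection over the countably many relevant strata of bounded index and energy. That outline is correct and is what one would find in Chapters 5--7 of Donaldson's book.

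The one place where your sketch is weaker than the careful treatment is the final paragraph on reducibles, and it is worth being precise about why. Under the standing hypotheses $H_1(W;\Z)=0$, $b^+(W)=0$, reducible ASD connections on $W^+$ asymptotic to $\Theta$ on both ends do exist for every $c\in H^2(W;\Z)$, and (for instance) the $c^2=-1$ reducible sits in an index-$2$ moduli space, so reducibles genuinely appear in the $\ind(z)<8$ range. At such a connection the full $\SU(2)$ linearization always has cokernel in the off-diagonal directions, and no equivariant holonomy perturbation can remove it; so the literal reading of \emph{``any element of these spaces is cut down transversely''} cannot be achieved at reducibles. Your phrase \emph{``leaving the reducibles undisturbed\ldots{} is what one means by `cut down transversely' in this context''} glosses over the substantive point: the correct statement is that one chooses $\overline\pi$ so that (a) every irreducible solution of index $<8$ is regular, and (b) the reducibles are cut out transversely \emph{within the reducible locus} with a cokernel of the expected dimension (one-dimensional for the $S^1$-type reducibles, as this paper itself notes a few paragraphs later in the $b_1(W)=0$ discussion), and the contribution of these reducibles to boundary counts is then accounted for explicitly, not excluded by genericity. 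Also, contrary to your claim, a holonomy perturbation need not leave the abelian ASD solutions \emph{undisturbed}; it commutes with the reduction and hence deforms the $U(1)$ moduli problem, and that deformation is exactly what one exploits to obtain regularity in the $U(1)$ slice. With that correction the argument is complete and matches the cited source.
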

\noindent 
Any secondary perturbation $\overline \pi$ that satisfies the properties of Proposition \ref{sec-per} is called an {\it $\epsilon$-admissible secondary perturbation}. We say $\overline \pi$ is admissible if it is $\epsilon$-admissible for some choice of $\epsilon$. For an $\epsilon$-admissible perturbation, $\mathcal M^{\overline \pi}_z(W;\alpha,\alpha')$ is a smooth manifold whose dimension is equal to $\ind(z)$.

We start with the definition of the map $\varphi:C_*^{\pi}(Y) \to C_*^{\pi'}(Y')$. We firstly fix an an $\epsilon$-admissible secondary perturbation $\overline \pi$. Let $\alpha$ be a generator of $C_*^{\pi}(Y)$ and define:
\begin{equation} \label{varphi}
	\varphi(\alpha):=\sum_{z:\alpha\to\alpha' } \#{\mathcal M^{\overline \pi}_z}(W;\alpha,\alpha') 
	\cdot \lambda^{\mathcal E(z)}\alpha'
\end{equation}
where the sum is over all paths $z$ that the moduli space ${\mathcal M_z^{\overline \pi}}(W;\alpha,\alpha')$ is 0-dimensional. In fact, for each $\alpha'$ there is at most one path $z$ from $\alpha$ to $\alpha'$ such that the moduli space $\mathcal M^{\overline \pi}_z(W;\alpha,\alpha')$ is 0-dimensional and for this choice of $z$, the moduli space is compact. Therefore, it consists of finitely many points. As in the case of differentials, there is a canonical choice of orientation for this moduli space \cite{Don:YM-Floer} and for this choice $\varphi$ is a chain map. 

The other terms in \eqref{total-cob-map} are defined in an analogous way. For example, $\mu:C_*^{\pi}(Y) \to C_{*-3}^{\pi}(Y)$ is defined similar to the map $\varphi$ in \eqref{varphi} by replacing $\#{\mathcal M^{\overline \pi}_z}(W;\alpha,\alpha')$ with $-\frac{1}{2}\#{\mathcal N^{\overline \pi}_z}(W;\alpha,\alpha')$ where $\mathcal N^{\overline \pi}_z(W;\alpha,\alpha')$ is a codimension $3$ submanifold of $\mathcal M^{\overline \pi}_z(W;\alpha,\alpha')$. The definition of $\mathcal N^{\overline \pi}_z(W;\alpha,\alpha')$ is similar to $\mathcal N_z(\alpha,\beta)$ and uses the path between base points. We refer the reader to \cite[Theorem 6]{Fro:h-inv} for the definition of $\mu$. The maps $\Delta_1(W):C_*^{\pi}(Y) \to \Lambda$ and $\Delta_2(W): \Lambda \to C_*^{\pi'}(Y')$ are also defined by considering the moduli spaces  ${\mathcal M^{\overline \pi}_z}(W;\alpha,\alpha')$ where either $\alpha$ or $\alpha'$ is equal to the trivial connection. A standard argument using 1-dimensional moduli spaces over $W^+$ verifies the following proposition:
\begin{prop} \label{CW-chain-map}
	$\widetilde C_W:\widetilde C_*^{\pi}(Y) \to \widetilde C_*^{\pi'}(Y')$ is a chain map.
\end{prop}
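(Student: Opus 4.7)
The plan is to expand the desired equality $\widetilde d \circ \widetilde C_W = \widetilde C_W \circ \widetilde d$ into its component identities and verify each by studying the boundary of an appropriate one-dimensional moduli space on $W^+$. Multiplying the two matrices and comparing entries (the $(1,1)$ and $(3,3)$ positions give the same equation), the proposition reduces to the four relations
\begin{align*}
    \text{(I)}\quad & d'\varphi = \varphi d, \\
    \text{(II)}\quad & D_1'\varphi = \Delta_1 d + D_1, \\
    \text{(III)}\quad & \varphi D_2 = D_2' - d'\Delta_2, \\
    \text{(IV)}\quad & U'\varphi + D_2'\Delta_1 - d'\mu = \varphi U + \Delta_2 D_1 + \mu d.
\end{align*}

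For each identity I would fix an $\epsilon$-admissible secondary perturbation $\overline\pi$ via Proposition~\ref{sec-per} and then consider the relevant one-dimensional moduli space on $W^+$: $\mathcal M^{\overline\pi}_z(W;\alpha,\alpha')$ with both endpoints irreducible for (I); the analogues with $\alpha'=\Theta$ for (II) and $\alpha=\Theta$ for (III); and a cut-down moduli space $\mathcal N^{\overline\pi}_z(W;\alpha,\alpha')$ inside a four-dimensional ambient space for (IV). Each of these is a smooth oriented 1-manifold whose Uhlenbeck--Floer compactification is a compact 1-manifold with boundary, so its signed boundary count is zero. Boundary points of generic type are broken trajectories: the flow on $W^+$ can break along the incoming cylinder through an intermediate critical point $\beta$ on $Y$, giving a factor of the form $\breve{\mathcal M}^\pi_{z_1}(\alpha,\beta) \times \mathcal M^{\overline\pi}_{z_2}(W;\beta,\alpha')$, or along the outgoing cylinder through $\beta'$ on $Y'$, giving $\mathcal M^{\overline\pi}_{z_1}(W;\alpha,\beta') \times \breve{\mathcal M}^{\pi'}_{z_2}(\beta',\alpha')$. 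Since topological energy is additive under concatenation, $\mathcal E(z_1)+\mathcal E(z_2)=\mathcal E(z)$, so the weights $\lambda^{\mathcal E(z)}$ factor compatibly and the signed count translates directly into the desired $\Lambda$-linear identity.

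The main technical obstacle is accounting correctly for boundary points at which the intermediate critical point is the trivial connection $\Theta$. Because $\Theta$ is reducible, near such a broken configuration the moduli space is not cut out transversely by the naive linearization, and a dedicated gluing and obstruction analysis is required. This analysis is carried out in \cite{Don:YM-Floer,Fro:h-inv}; it identifies the additional boundary contributions with compositions involving $D_1,D_2,D_1',D_2'$ glued to the trivial product connection on $W^+$, and it is precisely these contributions that produce both the ``$1$'' entry in the middle of the matrix \eqref{total-cob-map} and the extra summands $D_1$, $D_2'$, and $\Delta_2 D_1$ appearing in (II)--(IV). I would import this local analysis verbatim; the only novelty in our setting is the $\Lambda$-coefficient weights $\lambda^{\mathcal E(z)}$, and these behave correctly by the additivity of energy already noted. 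Modulo this analytic input, the proof reduces to the matrix bookkeeping sketched above.
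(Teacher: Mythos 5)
Your decomposition into the four identities (I)–(IV) via matrix multiplication is exactly right (with (1,1) and (3,3) coinciding), and the boundary-counting argument on compactified one-dimensional moduli spaces over $W^+$, including the special gluing analysis at the reducible trivial connection imported from \cite{Don:YM-Floer,Fro:h-inv} and the observation that $\lambda$-weights factor correctly by additivity of topological energy under concatenation, is precisely the "standard argument using 1-dimensional moduli spaces over $W^+$" that the paper invokes without further elaboration. Your write-up is essentially the same proof, just with the bookkeeping made explicit.
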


The chain map $\widetilde C_W$ behaves well with respect to the Floer grading and $\mdeg$. Identity \eqref{dim-mod-8} implies that the map $\widetilde C_W$ preserves the Floer grading. In order to study the behavior with respect to $\mdeg$, let $\mathcal M^{\overline \pi}_z(W;\alpha,\alpha')$ be a non-empty moduli space that contains the class represented by a connection $A$. Then $\mathcal E(z)$ can be written as the sum of three terms $\mathcal E_0(z)$, $\mathcal E_1(z)$ and $\mathcal E_2(z)$, which are defined in the following way:
\begin{equation*}
	\mathcal E_0(z):=\frac{1}{8\pi^2}\int_{(-\infty,-T]\times Y_0} \tr(F(A)\wedge F(A))\hspace{1cm}
	\mathcal E_1(z):=\frac{1}{8\pi^2}\int_{[T,\infty)\times Y_1} \tr(F(A)\wedge F(A))
\end{equation*}
\begin{equation*}
	\mathcal E_3(z):=\frac{1}{8\pi^2}\int_{W^c} \tr(F(A)\wedge F(A))
\end{equation*}
where $W^c$ is the complement of $(-\infty,-T]\times Y_0$ and $[T,\infty)\times Y_1$ in $W^+$, and $T$ is chosen such that the secondary perturbation term is supported on $W^c$. We can argue as in the previous subsection that for any positive constant $\delta$ there is $\epsilon$ such that $\mathcal E_0(z), \mathcal E_1(z)\geq -2\delta$. We also have:
\begin{equation*}
	\mathcal E_3(z)=\frac{1}{8\pi^2}\int_{W^c} |F^-(A)|^2-|F^+(A)|^2\geq -\frac{1}{8\pi^2}\int_{W^c} |G_0(A)+G_1(A)|^2
\end{equation*}
Therefore, if $\overline \pi$ is given by Proposition \ref{sec-per} and $\epsilon$ is small enough, then we can conclude that $\mathcal E_3(z)\geq -\delta$. Consequently, we can ensure that $\widetilde C_W$ does not decrease $\mdeg$ by more than a given positive number once $\epsilon$ is small enough.

We can relax the condition $H_1(W,\Z)$ to $b_1(W)=0$ using the ideas of \cite{Don:ori}. In the case that $b_1(W)=0$, we have two additional types of reducible flat connections on $W^+$:
\vspace{-5pt}
\begin{enumerate}
	\item[(i)] flat connections induced by representations of $\pi_1(W)$ into $\Z/2\Z$. The stabilizer of any such reducible 		
	connection is a copy of $\SO(3)$;
	\item[(ii)] flat connections induced by representations $\rho$ of $\pi_1(W)$ such that there are elements
	in the image of $\rho$ which does not have order $2$. The stabilizer of any such reducible connection is a copy of 
	$S^1$. The flat connections obtained from $\rho$ and $\rho^{-1}$ are equivalent to each other.
\end{enumerate}
Using holonomy perturbations we may assume that any reducible flat connection of type (i) is regular \cite{Don:ori}. Moreover, we may assume that the linearization of the perturbed ASD equation at any reducible connection of type (ii) has 1-dimensional co-kernel \cite{Don:ori}. 

The presence of flat connections of types (i) and (ii) requires us to modify the proof of Proposition \ref{CW-chain-map}. More specifically, we have the following new equations: 
\begin{equation}\label{two-equations-cW}
	D_1'\circ \lambda= c(W)\cdot D_1+\Delta_1 \circ d,\hspace{1cm}\lambda\circ D_2= c(W)\cdot D_2'-d'\circ \Delta_2,
\end{equation}	
where $c(W)$ is the number of the elements of $H_1(W,\Z)$, which is a finite positive integer. The first equation in \eqref{two-equations-cW} is  obtained by looking at the 1-dimensional moduli space associated to a path $z$ along $W$ from a generator of $C_*^{\pi}(Y)$ to the trivial connection $\Theta'$ on $Y'$. The first term on the right hand side of the first equation in \eqref{two-equations-cW} is obtained by counting the ends associated to gluing an element of the 0-dimensional moduli space $\breve{\mathcal M}^{\pi}_z(Y\times \R,\alpha,\Theta)$ to flat connections of type (i) and (ii). Using the arguments of \cite{Don:ori}, any flat connection of type (i) contributes one end to the glued up moduli space and any flat connection of type (ii) contributes two ends. Therefore, in total we have the coefficient $c(W)$ in our formula. A similar explanation applies to the second equation in \eqref{two-equations-cW}. In summary, Proposition \ref{CW-chain-map} holds if we replace \eqref{total-cob-map} with \eqref{total-cob-map-mod}:
\begin{equation}\label{total-cob-map-mod}
	\left[
	\begin{array}{ccc}
		\varphi(W)&0&0\\
		\Delta_1(W)&{ c(W)}&0\\
		\mu(W)&\Delta_2(W)&\varphi(W)
	\end{array}
	\right]
\end{equation}

\subsection{Equivariant Instanton Floer Homology Groups}\label{equiv}
In this section, we review the definition of three different {\it equivariant} Floer homology groups $\hrI_*(Y)$, $\crI_*(Y)$ and $\brI_*(Y)$ for any integral homology sphere $Y$. These Floer homology groups are $\Lambda[x]$-modules and are constructed algebraically from the Floer chain complexes of Subsection \ref{C-tilde} without any other geometrical input. The Floer homology groups $\hrI_*(Y)$ and $\crI_*(Y)$ are essentially the same as the Floer homology groups $\overline{\overline{\rm HF}}(Y)$, $\underline{\underline{\rm HF}}(Y)$ introduced in \cite{Don:YM-Floer}. The notation here is motivated by the notations used in the context of monopole Floer homology \cite{KM:monopoles-3-man}. Similar to the three flavors of monopole Floer homology, the Floer homology groups $\hrI(Y)$, $\crI(Y)$ and $\brI(Y)$ are modules over $\Lambda[x]$ and they fit into an exact sequence of the form:
\begin{equation}\label{ftb-exact}
	\xymatrix{
	\crI_*(Y) \ar[rr]^{j_*}& &
	 \hrI_*(Y) \ar[dl]^{p_*}\\
	& \brI_*(Y) \ar[ul]^{i_*} &
	}
\end{equation}

Following Subsection \ref{C-tilde}, we fix an $\epsilon$-admissible perturbation $\pi$ for the Chern-Simons functional of $Y$ and form the chain complex  $(C_*^{\pi}(Y),d)$ and the maps $U$, $D_1$ and $D_2$. The Floer homology group $\hrI_*(Y)$ (pronounced as ``I-from'') is the homology of the following chain complex:
\[
  \hrC_*^{\pi}(Y):=C_*^{\pi}(Y)\oplus \Lambda[x]\hspace{1cm}\widehat d(\alpha,\sum_{i=0}^{N}a_ix^i)=(d\alpha-\sum_{i=0}^NU^iD_2(a_i),0)
\]
The $\Lambda[x]$-module structure is also induced by the map:
\[
  x\cdot(\alpha,\sum_{i=0}^{N}a_ix^i):=(U\alpha,D_1(\alpha)+\sum_{i=0}^{N}a_ix^{i+1})
\]
It is helpful to think about the summand $\Lambda[x]$ in $\hrC_*^{\pi}(Y)$ as a free $\Lambda[x]$-module generated by the trivial connection. Similar comments apply to $\crI_*(Y)$ and $\brI_*(Y)$ described below.

The Floer homology group $\crI_*(Y)$ (pronounced as ``I-to'') is the homology of the following chain complex:
\[
  \crC_*^{\pi}(Y):=C_*^{\pi}(Y)\oplus \Lambda[\![x^{-1},x]/\Lambda[x]\hspace{1cm}\widecheck d(\alpha,\sum_{i=-\infty}^{-1}a_ix^{i})=(d\alpha,\sum_{i=-\infty}^{-1}D_1U^{-i-1}(\alpha)x^{i})
\]
Here $\Lambda[\![x^{-1},x]$ is the ring of Laurent power series in $x^{-1}$. The action of $x\in \Lambda[x]$ on this vector space over $\Lambda$ is given by multiplication by $x$. The quotient module $\Lambda[\![x^{-1},x]/\Lambda[x]$ can be identified in an obvious way with the set of power series in $x^{-1}$ with vanishing constant terms. The $\Lambda[x]$-module structure on $\crC_*^{\pi}(Y)$ is given by the map:
\[
  x\cdot(\alpha,\sum_{i=-\infty}^{-1}a_ix^{i}):=(U\alpha+D_2(a_{-1}),\sum_{i=-\infty}^{-2}a_ix^{i+1})
\]
Similar to the monopole Floer homology group $\overline{HM}(Y)$, the Floer homology group $\brI_*(Y)$ (pronounced as ``I-bar'') has a very simple form. This $\Lambda[x]$-module is defined to be $\Lambda[\![x^{-1},x]$. 

Next, we define chain maps:
\begin{equation}\label{surg-triangle-top}
	\xymatrix{
	\crC_*^{\pi}(Y) \ar[rr]^{j}& &
	 \hrC_*^{\pi}(Y) \ar[dl]^{p}\\
	& \brC_*(Y) \ar[ul]^{i} &
	}
\end{equation}
by the formulas:
\begin{equation}\label{i}
  i(\sum_{i=-\infty}^{N}a_ix^i)=(\sum_{i=0}^{N}U^{i}D_2(a_i),\sum_{i=-\infty}^{-1}a_ix^{i}),
\end{equation}
\begin{equation}\label{j}
 j(\alpha,\sum_{i=-\infty}^{-1}a_ix^{i})=(\alpha,0),
\end{equation}
\begin{equation}\label{p}
  p(\alpha,\sum_{i=0}^{N}a_ix^i)=\sum_{i=-\infty}^{-1}D_1U^{-i-1}(\alpha)x^{i}+\sum_{i=0}^{N}a_ix^i.
\end{equation}
\begin{lemma}
	The maps $i$ and $p$ are $\Lambda[x]$-module homomorphisms which are also chain maps, and $j$ is a chain map which commutes with the action
	of $x$ up to a chain homotopy. 
	That is to say, there is a map $h:\crC_*^{\pi}(Y) \to \hrC_*^{\pi}(Y)$ such that:
	\begin{equation}\label{commute-with-x}
	  i\circ x=x\circ i,\hspace{1cm}j\circ x-x\circ j=\widehat  d \circ h+h\circ  \widecheck d,\hspace{1cm}p\circ x=x\circ p.
	\end{equation}
	In particular, if $i_*$, $j_*$ and $p_*$ are the maps induced by $i$, $j$ and $p$ at the level of homology, then
	they are $\Lambda[x]$-module homomorphisms.
	Moreover, $i_*$, $j_*$ and $p_*$ form an exact triangle as in \eqref{ftb-exact}.
\end{lemma}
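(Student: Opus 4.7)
The argument has three parts. The algebraic input is a list of identities among $d$, $U$, $D_1$, $D_2$: from $\widetilde d^{\,2} = 0$ in \eqref{shape-SO-com} one reads off $d^2 = 0$, $D_1 \circ d = 0$, $d \circ D_2 = 0$, and the commutator $dU - Ud = D_2 D_1$. Combined with the Floer-grading constraint that $D_1$ vanishes outside $C_1$ while $U^n D_2$ has image in $C_{4-4n}$, one also obtains $D_1 U^n D_2 = 0$ for every $n \geq 0$; the commutator relation then yields $d U^n D_2 = 0$ and $D_1 U^n d = 0$ inductively. These are the only algebraic facts that will be used.

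With these identities in hand, the chain-map and $\Lambda[x]$-module statements for $i$ and $p$ are direct: since $\brC$ carries the zero differential, $\widecheck d \circ i = 0$ reduces to $d U^n D_2 = 0$ and $D_1 U^n D_2 = 0$, while $p \circ \widehat d = 0$ reduces to $D_1 U^n d = 0$ and $D_1 U^n D_2 = 0$. The $\Lambda[x]$-linearity is a term-by-term match using only the definitions of the $x$-actions on each module. For $j$ the commutator computes to
\[
  (j \circ x - x \circ j)(\alpha, \textstyle\sum_{i<0} a_i x^i) \;=\; (D_2(a_{-1}),\, -D_1(\alpha)),
\]
and the explicit homotopy $h(\alpha, \sum_{i<0} a_i x^i) := (0, -a_{-1}) \in \hrC$ realizes this commutator as $\widehat d \circ h + h \circ \widecheck d$; in particular $j_*$ is $\Lambda[x]$-linear on homology.

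For the exact triangle I would form the mapping cone $\mathrm{Cone}(j) = \hrC \oplus \crC[1]$ with the standard cone differential, yielding a short exact sequence $0 \to \hrC \to \mathrm{Cone}(j) \to \crC[1] \to 0$ of chain complexes. The plan is to identify $H_*(\mathrm{Cone}(j))$ with $\brI$ via the explicit map
\[
  \Phi\bigl((\alpha, p(x)), (\beta, q(x^{-1}))\bigr) \;:=\; \textstyle\sum_{j<0} D_1 U^{-j-1}(\alpha)\, x^j \,+\, p(x) \,+\, q(x^{-1}) \;\in\; \brC,
\]
which is seen to be a chain map using $D_1 U^n d = 0$ and $D_1 U^n D_2 = 0$. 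Its kernel
\[
  \ker\Phi \;=\; \bigl\{ \bigl((\alpha, 0),\, (\beta, -\textstyle\sum_{j<0} D_1 U^{-j-1}(\alpha)\, x^j)\bigr) \,:\, \alpha, \beta \in C \bigr\}
\]
is acyclic via the explicit contracting homotopy $K((\alpha, 0), (\beta, \cdots)) := ((0, 0), (\alpha, 0))$, which one checks satisfies $DK + KD = \mathrm{id}$ on $\ker\Phi$; hence $\Phi$ is a quasi-isomorphism. The canonical inclusion $\hrC \hookrightarrow \mathrm{Cone}(j)$ composes with $\Phi$ to give $p$ on the nose, so the long exact sequence has the form $\crI \xrightarrow{j_*} \hrI \xrightarrow{p_*} \brI \xrightarrow{\partial} \crI$.

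The main obstacle is the final identification $\partial = i_*$. A zig-zag chase on a cycle $s(x) \in \brC$ using the naive lift $((0, s_+(x)), (0, s_-(x)))$ (where $s_\pm$ denote the polynomial and singular parts) produces the class $[(0, s_-(x))] \in \crI$, which differs from $i(s) = (\sum_{k\geq 0} U^k D_2(s_k),\, s_-(x))$ by the cycle $(\sum_{k\geq 0} U^k D_2(s_k),\, 0)$. Showing that this discrepancy is a $\widecheck d$-boundary in $\crC$ — equivalently, choosing the lift in $\mathrm{Cone}(j)$ more carefully so that its projection already lands on $i(s)$ — is the single step that requires genuine work beyond the formal manipulations above. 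Once it is carried out, the exact triangle of the statement follows.
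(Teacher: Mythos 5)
Your verification of the chain-map identities, the $\Lambda[x]$-module structure of $i$ and $p$, the commutator computation $(j\circ x - x\circ j)(\alpha,\sum_{i<0}a_ix^i)=(D_2(a_{-1}),-D_1(\alpha))$, and the homotopy $h(\alpha,\sum_{i<0}a_ix^i)=(0,-a_{-1})$ coincide exactly with the paper's treatment of the first half of the lemma. The algebraic identities $d^2=0$, $D_1d=0$, $dD_2=0$, $dU-Ud=D_2D_1$, together with $D_1U^nD_2=0$ by $\Z/8$-grading and the induced $dU^nD_2=0$, $D_1U^nd=0$, are exactly what is needed and your derivation of them is correct.

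For the exact triangle you take a genuinely different route. The paper instead constructs ``backward'' maps $\frak k:\crC_*^\pi(Y)\to\brC_*(Y)$, $\frak l:\hrC_*^\pi(Y)\to\crC_*^\pi(Y)$, $\frak r:\brC_*(Y)\to\hrC_*^\pi(Y)$ that null-homotope each adjacent composite $p\circ j$, $i\circ p$, $j\circ i$, and checks that the three ``round trips'' $\frak l\circ j+i\circ\frak k$, $\frak r\circ p+j\circ\frak l$, $\frak k\circ i+p\circ\frak r$ are chain isomorphisms, at which point it invokes the exact-triangle detection lemma of Ozsv\'ath--Szab\'o and Kronheimer--Mrowka. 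Your mapping-cone argument is an equally standard alternative and is closer to the usual derivation of the surgery exact triangle from a short exact sequence of complexes; it has the advantage of not needing an external detection lemma, at the modest cost of verifying that $\Phi$ is a quasi-isomorphism and running the zig-zag.

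The step you flag as the only remaining ``genuine work'' is in fact a one-line fix, and it sidesteps the zig-zag entirely: instead of the naive lift $((0,s_+(x)),(0,s_-(x)))$ of a cycle $s=\sum_{i\le N}s_ix^i\in\brC$, use the lift
\[
  c:=\Bigl((0,s_+(x)),\ i(s)\Bigr)=\Bigl((0,s_+(x)),\ (\textstyle\sum_{k\ge 0}U^kD_2(s_k),\,s_-(x^{-1}))\Bigr).
\]
One has $\Phi(c)=s$ directly from the definition of $\Phi$, and $c$ is already a cycle in $\mathrm{Cone}(j)$: the first component of $Dc$ is $\widehat d(0,s_+)+j(i(s))=(-\sum_{k\ge 0}U^kD_2(s_k),0)+(\sum_{k\ge 0}U^kD_2(s_k),0)=0$, and $\widecheck d(i(s))=0$ using $dU^kD_2=0$ and $D_1U^nD_2=0$. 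Hence the projection of $c$ to $\crC[1]$ is $i(s)$ on the nose, which gives $\pi_*\circ\Phi_*^{-1}=i_*$ immediately, and the usual mapping-cone identification of the connecting homomorphism supplies $\delta=\pm j_*$. With this choice your argument is complete and correct.
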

\begin{proof}
	It is straightforward to check that the first and the third identifies in \eqref{commute-with-x} hold. If we define:
	\[
	  h(\alpha,\sum_{i=-\infty}^{-1}a_ix^{i}):=(0,-a_{-1}).
	\]
	then we obtain the second identity in \eqref{commute-with-x}.	
	We define maps $\frak k$, $\frak l$ and $\frak r$ as in the diagram:
        \begin{equation}
        	\label{surg-triangle}
        	\xymatrix{
        	\crC_*^{\pi}(Y)\ar[dr]_{\frak k} & &
        	 \hrC_*^{\pi}(Y) \ar[ll]_{\frak l}\\
        	& \brC_*(Y)  \ar[ur]^{\frak r} &
        	}
        \end{equation}
	using the formulas:
	\begin{equation}\label{frak-k-l}
 		\frak k(\alpha,\sum_{i=-\infty}^{-1}a_ix^{i})=-\sum_{i=-\infty}^{-1}a_ix^{i},\hspace{1cm}
		\frak l(\alpha,\sum_{i=0}^{N}a_ix^i)=((-1)^{|\alpha|}\alpha,0),
	\end{equation}
	and:
	\begin{equation}\label{frak-l}
 		\frak r(\sum_{i=-\infty}^{N}a_ix^i)=(0,\sum_{i=0}^{N}a_ix^i).
	\end{equation}
	In the definition of $\frak l$, the term $|\alpha|$ denotes the $\Z/8$-grading of $\alpha$.
	In particular, we assume that $\alpha$ is a homogenous element.
	These maps satisfy the identities:
	\begin{equation}
		p\circ j+\frak k \circ \widecheck d=0\hspace{1cm}
		i\circ p+\frak l\circ \widehat d+\widecheck d\circ \frak l=0\hspace{1cm}
		j\circ i+ \widehat d \circ \frak r=0
	\end{equation}
	It is also straightforward to check that the following maps are respectively isomorphisms of the chain 
	complexes $\crC_*^{\pi}(Y)$, $\hrC_*^{\pi}(Y)$ and $\brC_*(Y)$:
	\[
	  \frak l\circ j+i\circ \frak k\hspace{1cm}\frak r\circ p+j\circ \frak l\hspace{1cm}\frak k\circ i+p\circ \frak r
	\]
	This implies that \eqref{surg-triangle-top} determines an exact triangle at the level of homology groups. 
	(See \cite[Lemma 4.2]{OzSz:HF-branch} and \cite[Lemma 7.1]{KM:Kh-unknot}.)
\end{proof}

Equivariant instanton Floer homology groups and the exact triangle in \eqref{ftb-exact} are functorial with respect to cobordisms. Suppose $Y$ and $Y'$ are two integral homology spheres and $W:Y\to Y'$ is a cobordism with { $b_1(W)=b^+(W)=0$}. Suppose $\epsilon$-admissible perturbations $\pi$ and $\pi'$ of the Chern-Simons functionals of $Y$ and $Y'$ are fixed, and these perturbations are extended to an $\epsilon$-admissible perturbation of the ASD equation on $W^+$. As in the previous subsection, we can associate the maps $\varphi$, $\mu$, $\Delta_1$ and $\Delta_2$ to $W$ by the chosen perturbations. We use these maps as the only geometrical input to obtain the functoriality of the exact triangle in \eqref{ftb-exact}. We firstly define a homomorphism $\hrC_W:\hrC_*^{\pi}(Y) \to \hrC_*^{\pi'}(Y')$ as follows:
\begingroup
\small
\begin{align*}
  \hrC_W(\alpha,&\sum_{i=0}^{N}a_ix^i)=\left(\varphi(\alpha)+\sum_{i=0}^N(U')^i\Delta_2(a_i)+
  \sum_{i=0}^{N}\sum_{k=0}^{i-1}(U')^k\mu U^{i-1-k}D_2(a_i),{ c(W)\cdot}\sum_{i=0}^Na_ix^i\right.\\
  &+\sum_{i=0}^{N-1}(\sum_{k=i+1}^{N}D_1'(U')^{k-i-1}\Delta_2(a_k))x^i+\sum_{i=0}^{N-1}(\sum_{k=i+1}^{N}\Delta_1 U^{k-i-1}D_2(a_k))x^i\\
  &\left.+\sum_{i=0}^{N-2}(\sum_{j=i+1}^{N-1}\sum_{k=j+1}^{N}
  D_1'(U')^{j-i-1}\mu U^{k-j-1}D_2(a_k))x^i\right)
\end{align*}
\endgroup
{ Here $c(W)$ denotes the number of the elements of $H_1(W,\Z)$.} Similarly, $\crC_W:\crC_*^{\pi}(Y) \to \crC_*^{\pi'}(Y')$ is defined to be:
\begingroup
\small
\begin{align*}
  \crC_W(\alpha&,\sum_{i=-\infty}^{-1}a_ix^i)=\left(\varphi(\alpha), \sum_{i=-\infty}^{-1}\Delta_1U^{-i-1}(\alpha)x^i+
  \sum_{i=-\infty}^{-2}\sum_{j=i+1}^{-1}D_1'(U')^{-j-1}\mu U^{j-i-1}(\alpha)x^{i}\right.\\
  &+(\sum_{i=-\infty}^{-1}a_ix^i)\cdot({ c(W)}+\sum_{j=-\infty}^{-1}\Delta_1U^{-j-1}D_2(1)x^{j}+\sum_{j=-\infty}^{-1}D_1'(U')^{-j-1}\Delta_2(1)x^{j}\\
 & \left.+\sum_{j=-\infty}^{-1}\sum_{k=-\infty}^{-1}D_1'(U')^{-j-1}\mu U^{-k-1}D_2(1)x^{j+k})\right)
\end{align*}
\endgroup
Finally, we define $\brC_W:\brC_*^{\pi}(Y) \to \brC_*^{\pi'}(Y')$:
\begingroup
\small
\begin{align*}
  \brC_W(\sum_{i=-\infty}^{N}a_ix^i)=&(\sum_{i=-\infty}^{N}a_ix^i)\left({ c(W)}+\sum_{i=-\infty}^{-1}\Delta_1U^{-i-1}D_2(1)x^{i}
  +\sum_{i=-\infty}^{-1}D_1'(U')^{-i-1}\Delta_2(1)x^{i}\right.\\
  &\left. +\sum_{k=-\infty}^{-1}\sum_{j=-\infty}^{-1}D_1'(U')^{-j-1}\mu U^{-k-1}D_2(1)x^{k+j}\right)
\end{align*}
\endgroup
\begin{prop}\label{func-chain-map}
	The maps $\hrC_W$, $\crC_W$ and $\brC_W$	 are chain maps.
	The map $\brC_W$ is a $\Lambda[x]$-module homomorphism, and $\hrC_W$ and $\crC_W$ commute 
	with the action of $x$ up to chain homotopies. That is to say, there are maps $\fK:$ and $\fL$ such that:
	\[
	  x\circ\hrC_W-\hrC_W\circ x=\fK\circ \widehat d+\widehat d' \circ \fK \hspace{1cm}
	  x\circ \crC_W-\crC_W\circ x=\fL\circ \widecheck d+\widecheck d' \circ \fL
	\]
	Moreover, there are maps
	$K:\hrC_*^{\pi}(Y)\to \brC_*^{\pi}(Y')$ and $L:\brC_*^{\pi}(Y)\to \crC_*^{\pi}(Y')$ such that:
	\[
	  p'\circ \hrC_W-\brC_W\circ p=K \circ \widehat d\hspace{1cm}j'\circ\crC_W=\hrC_W\circ j\hspace{1cm}
	  i'\circ\brC_W-\crC_W\circ i=\widecheck d' \circ L
	\]
	In particular, $\hrC_W$, $\crC_W$ and $\brC_W$ induce a $\Lambda[x]$-module homomorphism of 
	exact triangles at the level of homology.
\end{prop}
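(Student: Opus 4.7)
The plan is purely algebraic, with Proposition \ref{CW-chain-map} providing the sole geometric input. From the matrix identity $\widetilde d' \circ \widetilde C_W = \widetilde C_W \circ \widetilde d$ (using \eqref{shape-SO-com} and \eqref{total-cob-map-mod}) I extract the entrywise relations
\begin{align*}
  d'\varphi &= \varphi d,\\
  D_1'\varphi &= c(W) D_1 + \Delta_1 d,\\
  c(W) D_2' &= \varphi D_2 + d'\Delta_2,\\
  U'\varphi - \varphi U + D_2'\Delta_1 &= d'\mu + \mu d + \Delta_2 D_1.
\end{align*}
To these I adjoin the relations produced by $\widetilde d^2 = 0$ (in particular $dD_2 = 0$, $D_1 d = 0$, and $Ud - dU = -D_2 D_1$), the analogous identities with primes, and the degree-vanishing identities $D_1 U^j D_2 = 0$ for $j \geq 0$ forced by the $\Z/8$-grading (since $U^j D_2$ lands in $C_0$ or $C_4$ while $D_1$ vanishes outside $C_1$). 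Together these form the complete toolkit.

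With the toolkit in hand, each of $\hrC_W$, $\crC_W$, $\brC_W$ is checked to be a chain map by expanding $\widehat d' \circ \hrC_W - \hrC_W \circ \widehat d$ (and the analogues for $\crC_W$ and $\brC_W$) and organizing terms by the coefficient of each $x^i$ and by the number of times the composition passes through the reducible via a factor of $D_1$, $\Delta_1$, $D_2$, or $\Delta_2$. Within each such block the relations above produce a telescoping cancellation, and the $c(W)$ coefficient built into the formulas absorbs the $c(W)$ appearing in the identity for $D_1'\varphi$. The case of $\brC_W$ is immediate, since $\brC_*^{\pi'}(Y')$ has zero differential and $\brC_W$ is by construction multiplication by a fixed element of $\Lambda[\![x^{-1}, x]$; this simultaneously verifies the $\Lambda[x]$-module property.

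For the homotopy statements, the maps $\fK$, $\fL$, $K$, $L$ are structural variants of the coefficient-extraction map $h$ from \eqref{commute-with-x}: each records the asymmetry between multiplying by $x$ in the source and multiplying in the target of $\hrC_W$ or $\crC_W$ (equivalently, the single coefficient that escapes from the telescope when one index is shifted), and the required identities reduce to the same telescoping verification. The equality $j' \circ \crC_W = \hrC_W \circ j$ holds on the nose, because $j$ kills the $\Lambda[\![x^{-1},x]/\Lambda[x]$ summand on which any correction would live. Passing to homology then yields the desired $\Lambda[x]$-module morphism of exact triangles.

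The main obstacle is bookkeeping. The double and triple multi-index sums in the formulas for $\hrC_W$, $\crC_W$, $\brC_W$ produce an unwieldy expansion under naive manipulation. The right organizing principle is to index every term by its ordered sequence of ``bounces'' off the trivial connection, that is, by the string of $D$/$\Delta$-factors it contains; once terms are sorted this way, every identity reduces to one of the four cobordism relations or to a square-zero relation applied in a single slot, and the cancellations become transparent. Additional care is required for the signs entering from the $-d$ entry in \eqref{shape-SO-com} and the $(-1)^{|\alpha|}$ in the definition of $\frak l$, which propagate into the verification of the triangle-compatibility identities.
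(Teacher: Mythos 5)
Your proposal matches the paper's approach exactly: the only geometric input is Proposition \ref{CW-chain-map}, the relations you extract from $\widetilde d'\,\widetilde C_W=\widetilde C_W\,\widetilde d$ and $\widetilde d^2=0$ together with the grading fact $D_1 U^j D_2=0$ form the correct toolkit, the observation that $j'\circ\crC_W=\hrC_W\circ j$ holds exactly is right, and the telescoping mechanism you describe is indeed how every verification goes. The one place where you are looser than the paper is the homotopy maps: calling $\fK$, $\fL$, $K$, $L$ ``structural variants of the coefficient-extraction map $h$'' undersells them, since unlike $h$ they are built from the geometric cobordism operators $\mu$, $\Delta_1$, $\Delta_2$, $D_1'$, $D_2$, $U'$, and the substantive content of the paper's proof is precisely writing down the explicit formulas $\fK(\alpha,\textstyle\sum a_ix^i)=(\mu(\alpha),\Delta_1(\alpha))$, $\fL(\alpha,\textstyle\sum a_ix^i)=(\mu(\alpha)+\Delta_2(a_{-1}),0)$, and the more elaborate $K$ and $L$ of \eqref{K} and \eqref{L}. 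That said, your characterization of these maps as recording ``the single coefficient that escapes the telescope when one index is shifted'' is the right heuristic and does produce the correct formulas when carried through, so this is a genuine sketch of the paper's argument rather than a different route.
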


\begin{proof}
	The first part is easy to verify. For the remaining parts, we can define $\fK$, $\fL$, $K$ and $L$ as follows:
	\begin{equation}\label{fKfL}
		\fK(\alpha,\sum_{i=0}^Na_ix^i)=(\mu(\alpha),\Delta_1(\alpha)),\hspace{1cm}
		\fL(\alpha,\sum_{i=-\infty}^{-1}a_ix^i)=(\mu(\alpha)+\Delta_2(a_{-1}),0),
	\end{equation}	
	\begin{equation}\label{K}
	K(\alpha,\sum_{i=0}^Na_ix^i)=\sum_{i=-\infty}^{-1}\Delta_1U^{-i-1}(\alpha)x^{i}
	+\sum_{k=-\infty}^{-1}\sum_{j=-\infty}^{-1}D_1'(U')^{-j-1}\mu U^{-k-1}(\alpha)x^{k+j},
	\end{equation}
	\begin{equation}\label{L}
		L(\sum_{i=-\infty}^{N}a_ix^i)=(\sum_{i=0}^{N}(U')^{i}\Delta_2(a_i)+\sum_{i=0}^{N}\sum_{j=0}^{i-1}(U')^{j}\mu U^{i-1-j} D_2(a_i),0).
	\end{equation}	
\end{proof}

\begin{remark} \label{aux-choice-comp}
	Suppose $W:Y\to Y'$ is a { negative definite cobordism with trivial 
	$b_1$}. Standard continuation maps can be used to show that if one changes the choices of the perturbation
	term $\overline \pi$ and the Riemannian metric on $Y$, then $\widetilde C_W$ changes by a chain homotopy 
	of the following form:
	 \begin{equation}\label{total-cob-hom}
            	\left[
            	\begin{array}{ccc}
            		\psi&0&0\\
            		K_1&0&0\\
            		\nu&K_2&-\psi
            	\end{array}
            	\right].
	\end{equation}
	This input can be used to show that the chain homotopy type of the maps $\hrC_W$, $\crC_W$ and 
	$\brC_W$ are also independent of the perturbation term and the metric. 
	
	Suppose $W':Y'\to Y''$ is another { negative definite cobordism with trivial 
	$b_1$.} Then we can form the composite cobordism $W\#W':Y\to Y''$.
	Standard neck stretching argument shows that the map $\widetilde C_{W\#W'}$ is chain homotopic to 
	$\widetilde C_{W'} \circ \widetilde C_W$. Using this input, it is straightforward (but slightly cumbersome) 
	to show that $\widehat C_{W\#W'}$ 
	is chain homotopic to $\widehat C_{W'} \circ \widehat C_W$. Similar results hold for  
	$\widecheck C_{W\#W'}$ and $\widebar C_{W\#W'}$. We do not attempt to write down these chain 
	homotopies here partly because we do not need them. In fact, there are larger {\it models} for 
	the homology groups $\hrI_*(Y)$, $\crI_*(Y)$ and 
	$\brI_*(Y)$ where these results are easier to verify. In particular, these alternative models will 
	behave better with respect to connected sum of integral homology spheres \cite{D:conn-Gamma}. 
\end{remark}

We can extend the definition of the grading $\mdeg$ to the non-zero elements of the complexes $\hrC_*^{\pi}(Y)$, $\crC_*^{\pi}(Y)$ and $\brC_*^{\pi}(Y)$:
\[
  \mdeg(\alpha,\sum_{i=0}^{N}a_ix^i)=\left\{
  \begin{array}{ll}
	 \mdeg(a_0,a_1,\dots,a_N)& \text{if $\sum_{i=0}^{N}a_ix^i\neq 0$},\\
	  \mdeg(\alpha)& \text{if $\sum_{i=0}^{N}a_ix^i=0$,}
  \end{array}
  \right.
\]
\[
    \mdeg(\alpha,\sum_{i=-\infty}^{-N}a_ix^i)=\left\{
  \begin{array}{ll}
	 \mdeg(\alpha)& \text{if $\alpha\neq 0$,}\\
	 \mdeg(a_{-N})& \text{if $\alpha=0$ and $a_{-N}\neq 0$,}
  \end{array}
  \right.  
\]
and
\[
  \mdeg(\sum_{i=-\infty}^{N}a_ix^i)=\left\{
  \begin{array}{ll}
	 \mdeg(a_0,a_1,\dots,a_N)& \text{if $\sum_{i=0}^{N}a_ix^i\neq 0$},\\
	  \mdeg(a_N)& \text{if $N<0$.}
  \end{array}
  \right.
\]
We define $\mdeg$ of the trivial element in these three complexes to be $\infty$. It is also useful to pick a notation for the standard notion of degree for non-zero elements of $\brC_*^{\pi}(Y)=\Lambda[\![x^{-1},x]$:
\[
  \hspace{2.5cm}{\rm Deg}(\sum_{i=-\infty}^Na_ix^i)=N\hspace{1cm}\text{if $a_N\neq 0$.}
\]
The following lemma is a straightforward consequence of our analysis of the previous subsection on the behavior of cobordism maps with respect to $\mdeg$:
\begin{lemma}\label{mdeg-func}
	For any positive real number $\delta$, there is a positive constant $\epsilon$ such 
	that if the perturbations $\pi$, $\pi'$ and the secondary perturbation on $W$ are $\epsilon$-admissible, then 
	the maps $\hrC_W$ and $\crC_W$ do not decrease $\mdeg$ by more than $\delta$. 
	Moreover, for any $z\in \brC_*^{\pi}(Y)$,
	the difference $|\mdeg(\brC_W(z))-\mdeg(z)|$ is at most $\delta$ and ${\rm Deg}(\brC_W(z))={\rm Deg}(z)$.
\end{lemma}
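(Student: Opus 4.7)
The plan is to bound the $\mdeg$-behavior of $\hrC_W$, $\crC_W$, and $\brC_W$ by extending the $\mdeg$ analysis at the end of Subsection \ref{tilde-func} from the elementary maps $\varphi$, $\mu$, $\Delta_1$, $\Delta_2$ (from the cobordism) and $U$, $U'$, $D_1$, $D_1'$, $D_2$, $D_2'$ (from the cylindrical ends) to the compositions appearing in the definitions of $\hrC_W$, $\crC_W$, $\brC_W$. The key input is that for $\epsilon$-admissible perturbations, each elementary map is a moduli-space count weighted by $\lambda^{\mathcal{E}(z)}$ with every contributing path $z$ satisfying $\mathcal{E}(z)\geq -C_0\epsilon$ for a fixed $C_0$.

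The decisive uniform estimate is a telescoping bound for concatenated paths. Every summand in $\hrC_W(z)$ or $\crC_W(z)$ --- and every coefficient $b_j$ of the Laurent series $S:=c(W)+\sum_{j\leq -1}b_j x^j$ through which $\brC_W$ acts by multiplication --- corresponds to a concatenation $z=z_1\#\cdots\#z_n$ through intermediate connections $\alpha_0,\alpha_1,\ldots,\alpha_n$ across $(-\infty,0]\times Y$, $W^+$, and $[0,\infty)\times Y'$. Summing the per-piece gradient-flow inequalities $\mathcal{E}(z_i)+f(\alpha_{i-1})-f(\alpha_i)\geq 0$ telescopes the intermediate perturbation values, giving $\mathcal{E}(z)\geq f(\alpha_n)-f(\alpha_0)-C_1\epsilon\geq -C_2\epsilon$ with $C_2$ independent of $n$. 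In particular, arbitrarily long chains $U^k$ or $(U')^k$ appearing in the formulas cost no additional $\mdeg$, and every $b_j$ satisfies $\mdeg(b_j)\geq -C_2\epsilon$ uniformly in $j$.

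With $\epsilon$ chosen so that $C_2\epsilon<\delta$, applying this bound summand by summand yields $\mdeg(\hrC_W(z))$, $\mdeg(\crC_W(z))\geq \mdeg(z)-\delta$, settling the first assertion. For $\brC_W$, writing $\brC_W(z)=zS$ and using $c(W)\neq 0$ together with the strict negativity of the $x$-powers in the correction immediately yields ${\rm Deg}(\brC_W(z))={\rm Deg}(z)$, since the top-$x$-power coefficient is the non-vanishing element $c(W) a_N$. The coefficient of $x^k$ for $0\leq k\leq N$ takes the form $c(W)a_k+\sum_{l=1}^{N-k}a_{k+l}b_{-l}$, a \emph{finite} sum involving only the uniformly controlled $b_{-l}$'s, and this directly gives the lower estimate $\mdeg(\brC_W(z))\geq \mdeg(z)-\delta$. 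The matching upper bound follows by selecting $k^*$ with $\mdeg(a_{k^*})=\mdeg(z)$: the coefficient $c(W)a_{k^*}+r_{k^*}$ has dominant term at $\mdeg(z)$ exactly, correction at $\mdeg\geq \mdeg(z)-C_2\epsilon$, and comparing coefficients across $k\in[0,N]$ rules out a simultaneous leading-order cancellation forcing $\mdeg(\brC_W(z))$ above $\mdeg(z)+C_2\epsilon$.

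The principal technical obstacle is the telescoping step: without it, a composition of $k$ applications of $U$ would lose up to $kC_0\epsilon$ in $\mdeg$, and the infinite families $\{b_j\}_{j\leq -1}$ would not admit a uniform bound. The telescoping argument converts the per-piece energy bounds into a uniform bound on the total energy of any concatenation, and this is what makes the analysis robust enough to handle the compositions of Subsection \ref{equiv} as both the length of a composition and the index range grow.
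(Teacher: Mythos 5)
Your telescoping observation is exactly the right mechanism for the first assertion and the lower bound for $\brC_W$, and it fills in what the paper leaves implicit when it says the lemma ``is a straightforward consequence'' of the analysis at the end of Subsection 2.2. However, the upper bound $\mdeg(\brC_W(z))\leq\mdeg(z)+\delta$ does \emph{not} follow from the bound $\mdeg(b_j)\geq -C_2\epsilon$ alone, and the phrase ``comparing coefficients across $k\in[0,N]$ rules out a simultaneous leading-order cancellation'' hides a genuine gap. Here is a concrete failure: take $S=c+bx^{-1}$ with $\mdeg(b)=-C_2\epsilon<0$, and for a large integer $N$ set $z=\sum_{i=0}^{N}(-b/c)^{N-i}x^{i}$. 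Then $zS$ has coefficient $c$ at $x^{N}$ and coefficient $0$ at $x^{k}$ for $0\leq k<N$ (a perfect telescoping cancellation), so $\mdeg(zS)=0$, while $\mdeg(z)=\mdeg(a_0)=-N\,C_2\epsilon$. Thus $\mdeg(zS)-\mdeg(z)=N\,C_2\epsilon$, which exceeds any fixed $\delta$ once $N$ is large, even for arbitrarily small but nonzero $C_2\epsilon$. The obstruction is precisely that, for $z$ of degree $N$, a chain of $N$ cancellations can stack up a drift of order $N\,C_2\epsilon$, and $N$ is not bounded before $\epsilon$ is chosen.

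The missing ingredient is a strengthening of the telescoping estimate specific to $\brC_W$. Every coefficient $b_j$ ($j\leq -1$) of $S$ is a weighted count over a concatenated path that begins at the trivial connection $\Theta$ on $Y$ (via $D_2(1)$) and terminates at the trivial connection $\Theta'$ on $Y'$ (via $\Delta_1$, or $D_1'$ after $\Delta_2$). By \eqref{energy-CS-cob} and additivity of topological energy under concatenation, this total energy satisfies $\mathcal E\equiv \CS(\Theta)-\CS(\Theta')=0\pmod{\Z}$, i.e.\ it is an integer. Your telescoping bound gives $\mathcal E\geq -C_2\epsilon$; once $C_2\epsilon<1$ the integrality forces $\mathcal E\geq 0$, so in fact $\mdeg(b_j)\geq 0$ uniformly in $j$. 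With that improved bound the argument closes cleanly: take $k^{*}$ to be the \emph{largest} index in $[0,N]$ with $\mdeg(a_{k^{*}})=\mdeg(z)$; then $\mdeg(a_{k^{*}+l})>\mdeg(z)$ for $l\geq 1$, so every correction term $a_{k^{*}+l}b_{-l}$ has $\mdeg>\mdeg(z)$ and the leading term of $c(W)a_{k^{*}}$ survives, giving $\mdeg(\brC_W(z))\leq\mdeg(z)$, and combined with the lower bound $\mdeg(\brC_W(z))\geq\mdeg(z)$ one actually gets $\mdeg(\brC_W(z))=\mdeg(z)$. So you should replace the vague ``rules out a simultaneous leading-order cancellation'' clause with the integrality-of-energy argument; without it the two-sided bound in the lemma is false for your stated hypotheses.
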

\noindent
Note that the claim ${\rm Deg}(\brC_W(z))={\rm Deg}(z)$ holds for any choices of admissible perturbations and we do not need any assumption on $\epsilon$.

We end this subsection with some speculations about equivariant instanton Floer homology groups. An immediate consequence of the claimed properties in Remark \ref{aux-choice-comp} is that the exact triangle \eqref{ftb-exact} is a topological invariant of $Y$ and does not depend on the choices of the perturbation term and the Riemannian metric on $Y$. By dropping the powers of $\lambda$ in our definitions, we can similarly define the analogue of the exact triangle \eqref{ftb-exact} with rational coefficients.
	
The three flavors of monopole Floer homology $\widehat {\rm HM}(Y)$, $\widecheck {\rm HM}(Y)$ and $\overline {\rm HM}(Y)$ are also 3-manifold invariants which fit into an exact triangle of $\Q[x]$-modules\footnote{In the context of monopole Floer homology, $\Q[x]$ should be regarded as the cohomology ring of ${\rm BS^1}$. On the other hand, equivariant instanton Floer homologies (with rational coefficients) are modules over the cohomology ring of ${\rm BSO}(3)$, which is again isomorphic to $\Q[x]$.}\cite{KM:monopoles-3-man}. Moreover, for integral homology spheres, the invariant $\overline {\rm HM}(Y)$ is always isomorphic to $\Q[x^{-1},x]\!]={\rm Hom}(\Q[\![x^{-1},x],\Q)$. These similarities motivate the following question. An affirmative answer to this question would be in the sprit of Witten's conjecture relating Donaldson invariants and Seiberg-Witten invariants \cite{Wit:SW,GNY:D=W,FL:D=W}. 
\begin{question}\label{Witten-3man}
	Is there any relation between the instanton invariants $\hrI(Y)$, $\crI(Y)$ 
	and the monopole invariants $\widehat {\rm HM}(Y)$, $\widecheck {\rm HM}(Y)$?
	How about the exact triangle \eqref{ftb-exact} and the corresponding one for 
	monopole Floer homology groups 
	$\widehat {\rm HM}(Y)$, $\widecheck {\rm HM}(Y)$ and $\overline {\rm HM}(Y)$?
\end{question}
	
As it is pointed out in Remark \ref{ins-h-mon-h}, Fr\'oyshov's instanton $h$-invariant can be reformulated using the exact triangle \eqref{ftb-exact}. This definition of $h$-invariant is similar to the definition of Monopole $h$-invariant \cite{Fr:SW-4-man-bdry,KM:monopoles-3-man,Fro:mon-h-inv}. As a follow up to Question \ref{Witten-3man}, one can ask:
\begin{question}\label{Witten-3man}
	Is there any relation between instanton and monopole $h$-invariants?	
\end{question}

\section{Homology Cobordism Invariants}
\subsection{Definition of $\Gamma_Y$} \label{GammaY}

We are ready to give the definition of $\Gamma_Y$. For any $k$, define:
\begin{equation} \label{Gamma0}
	\Gamma_Y(k):=\max(\left( \lim_{|\pi|_{\mathcal P}\to 0} \inf_{\substack{z\in\brC_*^{\pi}(Y),\,w\in\crC_*^{\pi}(Y), \\\widecheck d(w)=i(z), \,	{\rm Deg}(z)=-k}}(\mdeg(z)-\mdeg(w)) \right),0)
\end{equation}
Here the limit is taken over a sequence of perturbations $\{\pi_i\}$ where $\pi_i$ is $\epsilon_i$-admissible and $\epsilon_i$ converges to $0$. We use the convention that the infimum of an empty set is equal to $\infty$. In particular, if there is no $z\in \brC_*^{\pi}(Y)$ with $i(z)=0$ and ${\rm Deg} (z)=-k$, then the infimum in the above expression is equal to $\infty$. In the following proposition, we show that the definition of $\Gamma_Y(k)$ is independent of the choice of the sequence of perturbations $\{\pi_i\}$.

\begin{prop}\label{hom-inv-Gamma}
	For any integer $k$, $\Gamma_Y(k)$ depends only on $k$ and the homology cobordism class of $Y$. 
\end{prop}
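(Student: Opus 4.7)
The plan is to establish that any homology cobordism $W\colon Y\to Y'$ induces the inequality $\Gamma_{Y'}(k)\leq \Gamma_Y(k)$; reversing $W$ (which remains a homology cobordism, since $b^{+}(-W)=b^{-}(W)=0$) then yields the opposite inequality and hence equality. Well-definedness of $\Gamma_Y(k)$, i.e.\ independence of the sequence of perturbations used in \eqref{Gamma0}, is obtained by running the very same argument on the trivial cobordism $Y\times[0,1]$ equipped with two different perturbations on its boundary components.

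Since $W$ is a homology cobordism, $b^{+}(W)=b_1(W)=0$ and $c(W)=|H_1(W;\Z)|=1$, so the construction of Subsection \ref{equiv} applies and supplies chain maps $\hrC_W,\crC_W,\brC_W$ together with the auxiliary map $L$ of Proposition \ref{func-chain-map}, satisfying the key identity
\[
  i'\circ \brC_W - \crC_W\circ i \;=\; \widecheck d'\circ L.
\]
Given a pair $(z,w)\in\brC_*^{\pi}(Y)\times\crC_*^{\pi}(Y)$ with $\widecheck d(w)=i(z)$ and ${\rm Deg}(z)=-k$, set $z':=\brC_W(z)$ and $w':=\crC_W(w)+L(z)$. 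Using the chain map property of $\crC_W$ and the identity above one computes
\[
  i'(z') \;=\; \crC_W(i(z))+\widecheck d'(L(z)) \;=\; \widecheck d'(\crC_W(w))+\widecheck d'(L(z)) \;=\; \widecheck d'(w'),
\]
and Lemma \ref{mdeg-func} gives ${\rm Deg}(z')=-k$, so $(z',w')$ is admissible for the infimum defining $\Gamma_{Y'}(k)$.

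For any $\delta>0$, choose the primary and secondary perturbations to be $\epsilon$-admissible with $\epsilon$ sufficiently small. Lemma \ref{mdeg-func} then yields $|\mdeg(\brC_W(z))-\mdeg(z)|\leq\delta$ and $\mdeg(\crC_W(w))\geq\mdeg(w)-\delta$. The map $L$ in \eqref{L} is a fixed finite composition of the moduli-space operators $U,U',\mu,\Delta_2,D_2$, each of whose weightings $\lambda^{\mathcal E(\cdot)}$ satisfies $\mathcal E\geq -O(\epsilon)$ by the energy estimates of Subsection \ref{tilde-func}; consequently $\mdeg(L(z))\geq \mdeg(z)-C_k\delta$ for a constant $C_k$ depending only on $k$. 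In fact, for $k>0$ the condition ${\rm Deg}(z)=-k<0$ forces the nonnegative-power coefficients $a_i$ ($i\geq 0$) of $z$ to vanish, and the explicit formula for $L$ collapses to $L(z)=0$. Combining these estimates,
\[
  \mdeg(z')-\mdeg(w') \;\leq\; \max\bigl(\mdeg(z)-\mdeg(w),\,0\bigr)+(C_k+1)\delta.
\]
Taking the infimum over admissible pairs and passing to $\epsilon\to 0$ (hence $\delta\to 0$), the inner limit-infimum $v_{Y'}(k)$ in \eqref{Gamma0} is bounded above by $\max(v_Y(k),0)=\Gamma_Y(k)$, and applying $\max(\cdot,0)$ gives $\Gamma_{Y'}(k)\leq\Gamma_Y(k)$.

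The main technical point is controlling $\mdeg(L(z))$ uniformly in the case $k\leq 0$; this is routine from the energy-Chern-Simons relation but is the step that actually uses the finite bound on the number of operators composed in $L$. For the topologically interesting range $k>0$ the vanishing $L(z)=0$ reduces the argument to a direct transfer of the pair via $\brC_W$ and $\crC_W$, which is standard for Floer-theoretic cobordism invariance.
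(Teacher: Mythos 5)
Your proposal is correct and follows essentially the same route as the paper's proof: transfer an admissible pair $(z,w)$ across the cobordism via $\brC_W$ and $\crC_W$, repair the relation $\widecheck d' w' = i'(z')$ using the auxiliary map $L$ from Proposition~\ref{func-chain-map}, control $\mdeg$ via Lemma~\ref{mdeg-func} with vanishing error as $\epsilon\to 0$, and reverse the cobordism for the opposite inequality. The only cosmetic difference is that you fold $L(z)$ into the definition of $w'$ rather than writing $w'+L(z)$, and you spell out the observation that $L(z)=0$ when $k>0$, which the paper defers to the discussion following the proposition.
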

\begin{proof}
	Suppose $Y$ and $Y'$ are integral homology spheres and $W:Y\to Y'$ is a homology cobordism. 
	Suppose $\pi$ and $\pi'$ are $\epsilon$-admissible perturbations for the Chern-Simons functional of  
	$Y$ and $Y'$. We extend these perturbations to $W^+$ using a secondary $\epsilon$-admissible perturbations. 
	Thus we can associate the 
	chain complexes $\hrC^{\pi}_*(Y)$, $\crC^{\pi}_*(Y)$, $\brC^{\pi}_*(Y)$ to $Y$, 
	the chain complexes $\hrC^{\pi'}_*(Y')$, $\crC^{\pi'}_*(Y')$, 
	$\brC^{\pi'}_*(Y')$ to $Y'$ and the chain maps $\hrC_W$, $\crC_W$ and $\brC_W$ to $W$. 
	
	Let $z\in \brC_*^{\pi}(Y)$ and $w\in\crC_*^{\pi}(Y)$ be chosen such that ${\rm Deg}(w)=-k$ and 
	$\widecheck d(w)=i(z)$. Let $w'=\crC_W(w)$ and $z'=\brC_W(z)$. 
	Proposition \ref{func-chain-map} and Lemma \ref{mdeg-func} assert that:
	\[
	  {\rm Deg}(z')=-k,\hspace{1cm} i(z')=\widecheck d(w'+L(z)).
	\]
	where $L$ is defined in \eqref{L}. We fix a positive constant $\delta$. 
	Using Lemma \ref{mdeg-func}, we can conclude that there is a positive constant
	$\epsilon_0$ such that if $\epsilon\leq \epsilon_0$, then:
	\begin{equation}\label{1st-ineq}
	  \mdeg(w')\geq \mdeg(w)-\delta,\hspace{1cm} |\mdeg(z')-\mdeg(z)|\leq \delta.
	\end{equation}
	We can apply a similar argument  to show that if $\epsilon_0$ is small enough, then:
	\begin{equation}\label{2nd-ineq}
	  \mdeg(L(z))\geq \mdeg(z)-\delta.
	\end{equation}
		
	Identities \eqref{1st-ineq} and \eqref{2nd-ineq} imply that:
	\begin{align*}
	  \mdeg(w'+L(z))&\geq \min (\mdeg(w'), \mdeg(L(z)))\\
	  &\geq \min (\mdeg(w), \mdeg(z))-\delta
	\end{align*}
	Therefore, we have:
	\begin{align*}
	  \mdeg(z')-\mdeg(w'+L(z))&\leq \mdeg(z)- \min (\mdeg(w), \mdeg(z))+2\delta\\
	  &\leq \max(\mdeg(z)-\mdeg(w),0)+2\delta
	\end{align*}
	By taking infimum over all pairs of $(w,z)$ as above we have:
	\begin{align}
	  \inf_{\substack{z'\in\brC_*^{\pi'}(Y'),\,w'\in\crC_*^{\pi'}(Y'), \\
	  \widecheck d(w')=i(z'), \,{\rm Deg}(z')=-k}}(\mdeg(z')-&\mdeg(w'))
	  \leq\nonumber \\ 
	  &\max( \inf_{\substack{z\in\brC_*^{\pi}(Y),\,w\in\crC_*^{\pi}(Y), \\\widecheck d(w)=i(z), 
	  \,{\rm Deg}(z)=-k}}(\mdeg(z)-\mdeg(w)) ,0)+2\delta \label{ineq-cob}
	\end{align}
	By reversing the cobordism $W$, we obtain a similar inequality where the roles of $Y$ and $Y'$ are reversed. 
	Thus the limit in \eqref{Gamma0} 
	converges to a finite number for any sequence of holonomy perturbations 
	$\{\pi_i\}$ with $|\pi_i|_{\mathcal P}$ being convergent to zero. 
	Moreover, this limit is independent of the chosen sequence. 
	In fact, the limit only depends on the homology cobordism class of $Y$ and the integer $k$.
\end{proof}

Next, we attempt to unravel the definition of $\Gamma_Y$. Fix a positive integer $k$, and let $\alpha \in C_*^\pi(Y)$ be chosen such that:
\begin{equation}\label{alpha-pos}
  d \alpha=0, \hspace{1cm}D_1U^{k-1}(\alpha)\neq 0, \hspace{1cm} D_1U^{j}(\alpha)=0  \hspace{.3cm}\text{ for any $j< k-1$.}
\end{equation}
We can form a pair:
\begin{equation} \label{pair}
  z=\sum_{i=-\infty}^{-k}U^{-i-1} D_1(\alpha)x^i\in\brC_*^{\pi}(Y) \hspace{1cm} w=(\alpha,\sum_{i=-\infty}^{-1}a_ix_i)\in \crC_*^{\pi}(Y)
\end{equation}
where the constants $a_i\in \Lambda$ are chosen arbitrarily. Then $z$ has degree $-k$ and $i(z)=\widecheck d(w)$. In fact, any pair of $z$ with degree $-k$ and $w$, satisfying $\widecheck d(w)=i(z)$, are given as in \eqref{pair} for an appropriate choice of $\alpha$ and $\{a_i\}$. In particular, the definition of $\Gamma_Y$ at a positive integer can be rewritten as:
\begin{equation} \label{Gamma1}
	\Gamma_Y(k)=\lim_{|\pi|_{\mathcal P}\to 0}\inf_{\alpha}(\mdeg(D^1U^{(k-1)}(\alpha))-\mdeg(\alpha)))
\end{equation}
where the infimum is taken over all $\alpha \in C_*^\pi(Y)$ that satisfy \eqref{alpha-pos}. Given any positive constant $\delta$, there is $\epsilon$ such that if $\pi$ is an $\epsilon$-admissible perturbation, then the difference $\mdeg(D^1U^{(k-1)}(\alpha))-\mdeg(\alpha))$ is greater than $-\delta$. This is the reason that we do not need to take the maximum of the expression in \eqref{Gamma1} and $0$. The possibility to drop the maximum in \eqref{Gamma0} for positive values of $k$ can be also explained using the fact that for any $z$ with negative degree, we have $L(z)=0$.

We can show that this infimum can be taken over an even smaller set. We firstly need to introduce a new terminology:

\begin{definition}\label{homog-elt}
	Let $i$ be an integer and $r$ be a real number. Let $\pi$ be an admissible perturbation of the Chern-Simons 
	functional of an integral homology sphere $Y$. Let $\alpha_1$, $\dots$, $\alpha_k$ be the critical points 
	of the perturbed Chern-Simons functional whose Floer gradings are equal to $i$ mod $8$. 
	For each $\alpha_i$, there is a unique path $z_i$ from the trivial connection $\Theta$ to $\alpha_i$ 
	such that $\ind(z)$ is equal to $-i-3$. 
	Let also $r_i$ denote the topological energy of $z_i$, which is an integer lift of $-\CS(\alpha_i)$. (See the identity in 
	\eqref{energy-CS}.) A {\it homogenous} element of $C_*^\pi(Y)$ with {\it weight} $(r,i)$
	is defined to be an element of the following form:
	\[
	  \lambda^r(s_1\lambda^{r_1}\alpha_1+s_2\lambda^{r_2}\alpha_2+\dots+s_k\lambda^{r_k}\alpha_k)
	\] 
	where $s_i$ are rational numbers. The set of all homogenous elements of $C_*^\pi(Y)$ with weight $(r,i)$
	is denoted by $C_{(r,i)}^\pi(Y)$. 
\end{definition}	
\begin{definition}	
	 Given: 
	\[
	  \alpha=\sum_{l=1}^k \sum_{j}s_{l,j}\lambda^{r_{l,j}}\alpha_l \in C_i^\pi(Y)
	\]  
	define:
	\[
	  {\rm P}_{r,i}(\alpha):=\sum_{l=1}^k \sum_{j}s_{l,j}{\rm P}_{r,i}(\lambda^{r_{l,j}}\alpha_l) \in C_i^\pi(Y)
	\]  	
	where ${\rm P}_{r,i}(\lambda^{r_{l,j}}\alpha_l)$ is equal to $\lambda^{r_{l,j}}\alpha_l$ if $r_{l,j}=r+r_l$, and is 
	equal to $0$ otherwise. We extend ${\rm P}_{r,i}$ to a map on $C_*^\pi(Y)$ by requiring ${\rm P}_{r,i}(\beta)=0$
	if $\beta$ is an element with Floer grading $j$ such that $j\nequiv i$ mod $8$.
	The map ${\rm P}_{r,i}$ is called {\it projection to homogenous elements of weight $(r,i)$}.
	Similarly, we define a projection map ${\rm P}_r:\Lambda\to \Lambda$ which maps an element $a$ of 
	$\Lambda$ to $s\lambda^r$ where $s$ is the rational coefficient of $\lambda^r$ in $a$.
\end{definition}

The following lemma is a straightforward consequence of additivity of indices and topological energies:
\begin{lemma}\label{proj-homog}
	Any homogenous element of weight $(r,i)$ of $C_*^\pi(Y)$ is also a homogenous element of 
	$(r+1,i+8)$. We also have:
	\begin{itemize}
		\item[(i)] $d$ maps a homogenous element of weight $(r,i)$ to a 
		homogenous element of weight $(r,i-1)$, and we have 
		$d \circ {\rm P}_{r,i}={\rm P}_{r,i-1}\circ d$;
		\item[(ii)] $U$ maps a homogenous element of weight $(r,i)$ to a 
		homogenous element of weight $(r,i-4)$, and we have 
		$U \circ {\rm P}_{r,i}={\rm P}_{r,i-4}\circ U$;
		\item[(iii)] $D_1$ maps a homogenous element of weight $(r,1)$ to a rational multiple of $\lambda^r$, 
		and we have $D_1\circ {\rm P}_{r,1}={\rm P}_{r} \circ D_1$;
		\item[(iv)] $D_2$ maps $\lambda^r$ to a homogenous element of weight $(r,-4)$, and we have
		$D_2\circ {\rm P}_{r}={\rm P}_{r,-4}\circ D_2$.
	\end{itemize}
\end{lemma}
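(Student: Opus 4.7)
The plan is a bookkeeping argument resting on two facts. First, both $\mathcal E$ and $\ind$ are additive under concatenation of paths in $\mathcal B(Y)$. Second, $\pi_1(\mathcal B(Y)) \cong \Z$ is generated by a loop of index $8$ and energy $1$; consequently, for any connection $\beta$ there is at most one path class from $\Theta$ to $\beta$ of a given index $N$, and its topological energy is the integer lift of $-\CS(\beta)$ uniquely determined by $N$.

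The preliminary claim that weight $(r,i)$ coincides with weight $(r+1, i+8)$ follows because the canonical path of index $-(i+8)-3$ from $\Theta$ to $\alpha_l$ differs from the one of index $-i-3$ by the inverse of the generator of $\pi_1(\mathcal B(Y))$, so its topological energy is $r_l - 1$. Thus $\lambda^r \cdot \lambda^{r_l}\alpha_l = \lambda^{r+1}\cdot \lambda^{r_l - 1}\alpha_l$, exhibiting the same element under both conventions.

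Assertions (i), (ii), and (iv) follow from the same mechanism. If $z$ is a contributing path from $\alpha_l$ (of grading $i$) to $\beta$ (of grading $i-1$ for $d$, respectively $i-4$ for $U$), then $\ind(z)$ equals $1$ (respectively $4$), so $z_l \ast z$ has index $-(i-1)-3$ (respectively $-(i-4)-3$) and by uniqueness is the canonical path to $\beta$ in the appropriate grading. Hence $r_l + \mathcal E(z) = r_\beta$, and each contribution $\lambda^{r_l + \mathcal E(z)}\beta$ is $\lambda^{r_\beta}\beta$. For (iv), a path $z:\Theta \to \beta$ contributing to $D_2$ satisfies $\ind(z) = 1 = -(-4)-3$ with $\beta$ of grading $-4$, so $z$ is the canonical path and $\mathcal E(z) = r_\beta$. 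Commutation with ${\rm P}_{r,i}$ is then automatic, since each homogeneous summand of the input maps to a homogeneous element of the predicted weight.

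The one case requiring extra care is (iii). For $\alpha_l$ of grading $1$, a contributing path $z:\alpha_l \to \Theta$ has $\ind(z) = 1$, so $z_l \ast z$ is a loop at $\Theta$ of index $-4+1=-3$. Using $\deg^+(\Theta) - \deg^-(\Theta) = -3$ together with the structure of loops at the reducible trivial connection, any such loop has instanton number $0$ and hence vanishing energy. Therefore $\mathcal E(z) = -r_l$, and $D_1(\lambda^{r_l}\alpha_l)$ is a rational multiple of $\lambda^0$; summing over $l$ shows $D_1$ maps $C^\pi_{(r,1)}(Y)$ into $\Q \cdot \lambda^r$, and $D_1 \circ {\rm P}_{r,1} = {\rm P}_r \circ D_1$. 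This reducible-correction step at $\Theta$ is the only subtlety; everything else is additivity of indices and energies.
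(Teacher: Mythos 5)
Your proof is correct and follows exactly the route the paper indicates: the paper simply asserts the lemma is ``a straightforward consequence of additivity of indices and topological energies,'' and your argument is a careful unwinding of that additivity, using the index conventions $\deg^{\pm}(\Theta)$ and the identification $\pi_1(\mathcal B(Y))\cong\Z$ (index $8$, energy $1$) to pin down each relevant path class. Your treatment of case (iii) — recognizing that $z_l\ast z$ is a loop at $\Theta$ of index $-3$, hence of instanton number zero and energy zero — isolates the one place where the reducibility of $\Theta$ matters, and is exactly the right observation; everything else is the bookkeeping the paper leaves implicit.
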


Suppose $\alpha\in C_*^\pi(Y)$ satisfies \eqref{alpha-pos}, and $\mdeg(D_1U^{k-1}(\alpha))=r_0$. We define $\alpha_0$ to be $\lambda^{-r_0}\cdot{\rm P}_{(r_0,4(k-1)+1)}(\alpha)$, which is a homogenous element of weight $(0,4(k-1)+1)$. By Lemma \ref{proj-homog}, $\alpha_0$ satisfies the identities in \eqref{alpha-pos} and $\mdeg(D_1U^{k-1}(\alpha_0))=0$. Since the projection maps ${\rm P}_{r,i}$ do not decrease $\mdeg$, we can conclude:
\[
  \mdeg(D_1U^{k-1}(\alpha_0))- \mdeg(\alpha_0)\leq \mdeg(D_1U^{k-1}(\alpha))-\mdeg(\alpha)
\]
This inequality implies that to find the value of $\Gamma_Y(k)$, it suffices to take the infimum in \eqref{Gamma1} over the following set:
\begin{equation}\label{Lpik-pos}
	\mathcal L^\pi_k:=\{\alpha\in C_*^\pi(Y) \mid \alpha \in C_{(0,4k-3)}^\pi(Y),\,
	d \alpha=0,\,\, (k-1)=\min \{j\mid D^1U^{j}(\alpha)\neq 0\}\}
\end{equation}

We can give a similar description for $\Gamma_Y(k)$ in the case that $k$ is non-positive. Let:
\begin{equation} \label{pair-2-0}
	\alpha\in C_*^\pi(Y) \hspace{1cm}\fA=\{a_0,a_1,\dots,a_{-k}\}\subset \Lambda
\end{equation}
be given such that:
\begin{equation} \label{cond-K}
  d \alpha=\sum_{i=0}^{-k}U^{i}D_2(a_i).
\end{equation}
Then for any arbitrary sequence $\{b_i\}_{i=-\infty}^{-1}$ the pair: 
\begin{equation} \label{pair-2}
  z=\sum_{i=0}^{-k} a_ix^i + \sum_{i=-\infty}^{-1}U^{-i-1} D_1(\alpha)x^i\in\brC_*^{\pi}(Y) \hspace{1cm} 
  w=(\alpha,\sum_{i=-\infty}^{-1}b_ix^i)\in \crC_*^{\pi}(Y)
\end{equation}
satisfies the identity $i(z)=\widecheck d(w)$, and any such pair with $z$ being of degree $-k$ is given as in \eqref{pair-2}. Then we have:
\begin{equation} \label{Gamma1-neg}
	\Gamma_Y(k):=\max \left(0, \lim_{|\pi|_{\mathcal P}\to 0}\inf_{(\alpha,\{a_0,a_1,\dots,a_{-k}\})}
	(\mdeg(a_0,a_1,\dots,a_{-k})-\mdeg(\alpha))\right)
\end{equation}
where the infimum is taken over all pairs $(\alpha,\fA)$ which satisfy \eqref{cond-K}.

We can work with a smaller set of pairs $(\alpha,\fA)$ in \eqref{Gamma1-neg}. Let $(\alpha,\fA)$ be as in \eqref{pair-2-0} satisfying \eqref{cond-K}. We also assume that $a_{i_0}$ has the minimum $\mdeg$ among the elements of $\fA$, which is equal to $r_0$. We define:
\[
  \alpha_0:=\lambda^{-r_0}{\rm P}_{r_0,-3-4i_0}(\alpha)\hspace{1cm}b_i=\left\{
  \begin{array}{ll}
 	\lambda^{-r_0}{\rm P}_{r_0+\frac{i_0-i}{2}}(a_i)&i\equiv i_0 \mod 2\\
	0&i\nequiv i_0 \mod 2
  \end{array}
  \right.
\]
Our assumption on $i_0$ implies that $b_i=0$ if $i>i_0$. Projection of \eqref{cond-K} to homogenous elements of weight $(r_0,-4-4i_0)$ shows that:
\begin{equation} \label{cond-K-0}
  d \alpha_0=\sum_{i=0}^{-k}U^{i}D_2(b_i).
\end{equation}
We also have $\mdeg(b_0,b_1,\dots,b_{-k})=0$ and $\mdeg(\alpha_0)$ is not less than $\mdeg(\alpha)-r_0$ . Consequently, we have:
\[
  (\mdeg(b_0,b_1,\dots,b_{-k})-\mdeg(\alpha_0))\leq (\mdeg(a_0,a_1,\dots,a_{-k})-\mdeg(\alpha))
\]
This analysis shows that it suffices to take the infimum in \eqref{Gamma1} over the following set:
\begin{align}
	\mathcal L^\pi_k:=\{(\alpha,\{a_0,a_1,\dots,a_{-k}\}) \mid &\alpha \in C_{(0,4k-3)}^\pi(Y),\,
	d \alpha=\sum_{i=0}^{-k}U^{i}D_2(a_i),\text{ $a_i$ is a rational multiple} \nonumber\\
	&\text {of $\lambda^{\frac{-k-i}{2}}$ if $i\equiv k$ mod $2$, and is zero otherwise.} \} \label{Lpik-neg}
\end{align}

\begin{remark}
	The author does not know any example where the value of $\Gamma_Y$ at a non-positive integer is a finite positive
	number. However, one can easily construct chain complexes over $\Lambda$ with the similar formal properties 
	as $\widetilde C_*^\pi(Y)$ such that the analogue of $\Gamma_Y$ takes non-trivial values at non-positive integers.
	For instance, let $(C_*,d)$ be a $\Z/8\Z$-graded complex generated by two generators $\alpha$ and $\beta$ in 
	degrees $5$ and $4$ such that:
	\[
	  d\alpha=\lambda^{r_1}\beta\hspace{1cm} d\beta=0
	\]
	We also define $U:C_*\to C_{*-4}$ and $D_1:C_*\to \Lambda$ to be trivial maps. Let also $D_2:\Lambda \to C_*$
	be defined as follows:
	\[
	  D_2(1)=\lambda^{r_2}\beta
	\]
	where $r_2$ is a real number smaller than $r_1$. As in \eqref{Gamma0}, we can associated a map 
	$\Gamma:\Z\to \overline \R^{\geq 0}$. The value of this function at $0$ is equal to the positive number $r_1-r_2$
	(and this is the only non-trivial value of $\Gamma$). We hope that the understanding the behavior of $\Gamma_Y$ with 
	respect to various topological constructions, such as surgery along knots, would be helpful to study whether there are 
	complexes as above with non-trivial values of $\Gamma_Y$ at non-positive integers.
\end{remark}
Motivated by the definition of $\mathcal L^\pi_k$ in \eqref{Lpik-pos} and \eqref{Lpik-neg}, we also give the following definition:
\begin{definition}
	Let $\pi$ be an admissible perturbation for the Chern-Simons functional of an integral homology sphere
	$Y$. We say $w\in \crC_*^\pi(Y)$ and $z\in \brC_*^\pi(Y)$ form a {\it special pair 
	of degree $N$} if the following conditions are satisfied:
	\begin{itemize}
		\item[(i)] ${\rm Deg}(z)=N$;
		\item[(ii)] $w=(\alpha,0)$ and $z=\sum_{j=-\infty}^{N}a_jx^j$;
		\item[(iii)] if $N<0$, then $\alpha\in \mathcal L^\pi_{-N}$ and if $N\geq 0$, then 
		$(\alpha,\{a_0,\dots,a_N\})\in \mathcal L^\pi_{-N}$.
		\item[(iv)] For $j\leq -1$, we have $a_j=D_1U^{-j-1}(\alpha)$.
	\end{itemize}
\end{definition}
In particular, any special pair of degree $N$ satisfies the identity $i(z)=\widecheck d(w)$. The above discussion shows that to compute the value of $\Gamma_Y$ at $-N$, it suffices to take the infimum in \eqref{Gamma0} over all special pairs $(z,w) $ of degree $N$.

\begin{example} \label{simple-examples}
	In the case that $Y=S^3$, $\Sigma(2,3,5)$ or $-\Sigma(2,3,5)$, we fix the metrics on $Y$ which are induced by the 
	standard metric on the 3-dimensional sphere, the universal cover of $Y$. 
	Then all critical points are non-degenerate and all moduli spaces are regular.
	Therefore, we can use the trivial perturbation $\pi_0$ to compute $\Gamma_Y$. In the case that $Y=S^3$, the set 
	$\mathcal L_k^{\pi_0}$ is empty for any positive $k$ and consists of elements of the form $(0,\{a_0,\dots,a_{-k}\})$ for negative values of
	$k$. This implies that: 
	\[
	  \Gamma_{S^3}(k)=\left\{
	  \begin{array}{ll}
		  \infty&k>0\\
		  0&k\leq 0
	  \end{array}		  
	  \right.
	\]
	
	The complex $C_*^{\pi_0}(\Sigma(2,3,5))$ is generated by two flat connections $\alpha$ and $\beta$ with Floer gradings 
	$1$ and $5$. Then we have:
	\[
	  D_1(\alpha)=\lambda^{\frac{1}{120}} \hspace{1cm}U(\beta)=8\lambda^{\frac{2}{5}}\alpha\hspace{1cm}D_2=0
	\]  
	The above identities after evaluating $\lambda$ at $1$ are verified in \cite{Fro:h-inv}. 
	The calculation of the index and the Chern-Simons
	functional which determines the powers of $\lambda$ in the above identities can be found, for example, in \cite{FS:HFSF}.
	For an exposition of the method of \cite{FS:HFSF}, we refer the reader to \cite[Subsection 3.4]{AX:suture-higher} 
	where the same conventions as here 
	for the definition of the Chern-Simons functional and Floer grading is used.
	These calculations show that 
	The complex $C_*^{\pi_0}(-\Sigma(2,3,5))$ is generated by two flat connections $\alpha^*$ and $\beta^*$ with Floer gradings
	$4$ and $0$. We also have: 
	\[
	  D_2(1)=\lambda^{\frac{1}{120}}\alpha^* \hspace{1cm}U(\alpha^*)=8\lambda^{\frac{2}{5}}\beta^*\hspace{1cm}D_1=0
	\]	
	These identities show that:
	\[
	  \Gamma_{\Sigma(2,3,5)}(k)=\left\{
	  \begin{array}{ll}
		  \infty&k>2\\
		  \frac{49}{120}&k=2\\
		  \frac{1}{120}&k=1\\
		  0&k\leq 0
	  \end{array}		  
	  \right.	  
	\]
	and $\Gamma_{-\Sigma(2,3,5)}=\Gamma_{S^3}$.
\end{example}

\subsection{Properties of $\Gamma_Y$} \label{prop-GammaY}
In this subsection, we review some of the basic properties of $\Gamma_Y$. 
\begin{prop}
	$\Gamma_Y$ is an increasing function with values in $\overline \R^{\geq 0}$.
\end{prop}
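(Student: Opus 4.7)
The claim $\Gamma_Y(k) \in \overline{\R}^{\geq 0}$ is immediate from \eqref{Gamma0}: the definition is a maximum with $0$ (so the value is non-negative), and an infimum over the empty set is $+\infty$ by convention. For monotonicity, it suffices to show $\Gamma_Y(k) \leq \Gamma_Y(k+1)$ for every $k \in \Z$, and I plan to do this by passing from a special pair of degree $-(k+1)$ to a valid pair of degree $-k$ via multiplication by $x$.

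The construction is the simplest possible. Let $(w,z)$ be a special pair of degree $-(k+1)$, so $w = (\alpha, 0) \in \crC_*^{\pi}(Y)$. Set $w' := xw$ and $z' := xz$. Since $i$ is a $\Lambda[x]$-module homomorphism (by the lemma in Subsection~\ref{equiv}), $i(z') = x \cdot i(z) = x \cdot \widecheck d(w)$. A direct calculation shows that on pairs of the form $(\alpha, 0)$ the operator $\widecheck d$ commutes with the $x$-action: the only obstruction comes from a term of the shape $D_1 U^{k} D_2(a_{-1})$, which vanishes because $a_{-1}=0$, while the needed identities $[d, U] = D_2 D_1$, $dD_2 = 0$, and $D_1 d = 0$ are all encoded in $\widetilde d^2 = 0$ on $\widetilde C_*$. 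Consequently $\widecheck d(w') = x \widecheck d(w) = i(z')$, and $xw = (U\alpha, 0)$, so $(w', z')$ is again a valid pair, with $\mathrm{Deg}(z') = \mathrm{Deg}(z) + 1 = -k$.

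The $\mdeg$ estimate is the technical heart. Since $U$ is an entry of the matrix differential $\widetilde d$ on $\widetilde C_*^\pi(Y)$, and that differential decreases $\mdeg$ by at most $2\delta$ for $\epsilon$-admissible $\pi$, with $\delta \to 0$ as $\epsilon \to 0$ (see the discussion in Subsection~\ref{C-tilde}), one has $\mdeg(w') = \mdeg(U\alpha) \geq \mdeg(w) - 2\delta$. Multiplication by $x$ in $\brC = \Lambda[\![x^{-1}, x]$ is a pure monomial shift, so the Novikov coefficients of $z$ are preserved; the only subtle effect is the possible promotion of the coefficient $D_1\alpha$ into the $x^0$ position of $z'$ when $\mathrm{Deg}(z)$ crosses $-1$. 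In every case a short analysis shows $\mdeg(z') \leq \mdeg(z)$, because this promotion can only add a new coefficient to the minimum. Combining, $\mdeg(z') - \mdeg(w') \leq \mdeg(z) - \mdeg(w) + 2\delta$.

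Taking the infimum over special pairs of degree $-(k+1)$ on the right, which by the discussion at the end of Subsection~\ref{GammaY} computes the same quantity as the infimum in \eqref{Gamma0}, then sending $\epsilon \to 0$ and applying $\max(\cdot, 0)$, yields $\Gamma_Y(k) \leq \Gamma_Y(k+1)$, completing the proof. The main technical obstacle is the careful bookkeeping in the case split in the definition of $\mdeg$ on $\brC$; once the commutativity of $\widecheck d$ with $x$ is secured on pairs of the form $(\alpha, 0)$, the remaining steps are formal.
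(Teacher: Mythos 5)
Your proof is correct and follows essentially the same route as the paper's: start with a special pair, multiply both $w$ and $z$ by $x$, and combine $\mdeg(xz)\leq\mdeg(z)$ with $\mdeg(xw)=\mdeg(U\alpha)\geq\mdeg(w)-2\delta$ to get the monotonicity inequality after taking infima and letting $\epsilon\to 0$. The only (cosmetic) difference is that you verify $\widecheck d\circ x = x\circ \widecheck d$ explicitly on elements of the form $(\alpha,0)$ by noting the potential obstruction $D_1U^kD_2(a_{-1})$ vanishes when $a_{-1}=0$, while the paper asserts outright that $\widecheck d$ is a $\Lambda[x]$-module homomorphism (which one can also see in general: $D_1U^kD_2$ vanishes for degree reasons since $D_2$ lands in Floer grading $4$, $U$ has degree $-4$, and $D_1$ is supported in grading $1$, so $4-4k\neq 1\bmod 8$).
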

\begin{proof}
	It is obvious from the definition that $\Gamma_Y$ takes values in $\overline \R^{\geq 0}$.
	To show monotonicity of $\Gamma_Y$, fix an admissible perturbation $\pi$ of the Chern-Simons 
	functional and let $(z_0,w_0)$ be a special pair of degree $-k$. In particular, $z_0$ has degree $-k$, $w_0=(\alpha,0)$
	for an appropriate $\alpha\in C_*^\pi(Y)$ and $\widecheck d(w_0)=i(z_0)$. We define $z_1:=x\cdot z_0$
	and $w_1:=x\circ w_0$. Then $z_1$ is an element of $\brC_*^\pi(Y)$ with degree $-k+1$.
	We also have $i(z_1)= \widecheck d(w_1)$ because $i$ and $\widecheck d$ are 
	$\Lambda[x]$-module homomorphisms. 
	
	It is clear that $\mdeg(z_1)\leq \mdeg(z_0)$. Moreover, for a given $\delta$, there is $\epsilon$ such that for any 
	$\epsilon$-admissible perturbation:
	\[
	  \mdeg(w_1)\geq \mdeg(w_0)-\delta.
	\]	
	These inequalities imply that:
	\[
	  \mdeg(z_1)- \mdeg(w_1)\leq \mdeg(z_0)- \mdeg(w_0)+\delta
	\]
	Consequently, taking infimum over all choices of of $(z_0,w_0)$ and letting $|\pi|_{\mathcal P}$ 
	go to $0$ allow us to conclude that $\Gamma_{Y}(k-1)\leq \Gamma_{Y}(k)$.
\end{proof}

\begin{prop}\label{possible-values-Gamma}
	Suppose $h$ denotes the Fr\'oyshov's $h$-invariant of an integral homology sphere $Y$. 
	Then we have $\Gamma_Y(k)$ is a finite number if and only if $k\leq 2h(Y)$.
\end{prop}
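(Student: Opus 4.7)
The plan is to translate the finiteness of $\Gamma_Y(k)$ into a purely homological condition on the exact triangle \eqref{ftb-exact}, and then to identify that condition with the reformulation of Fr\o yshov's $h$-invariant discussed in Remark \ref{ins-h-mon-h}.

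First, since $\brC_*^{\pi}(Y)=\Lambda[\![x^{-1},x]$ carries the zero differential, every element of $\brC_*^{\pi}(Y)$ is automatically a cycle. Hence for a fixed admissible $\pi$, the existence of a pair $(z,w)$ with $\widecheck d(w)=i(z)$ and ${\rm Deg}(z)=-k$ is equivalent to the existence of a class $[z]\in \brI_*(Y)$ of degree $-k$ with $i_*([z])=0$, which by the exactness of \eqref{ftb-exact} is the same as $[z]\in \mathrm{Im}(p_*)$. The reduction to homogeneous representatives in Subsection \ref{GammaY} shows, moreover, that the infimum in \eqref{Gamma0} is a finite real number whenever the admissible set is non-empty, and equals $+\infty$ otherwise. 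Consequently,
\[
  \Gamma_Y(k)<\infty \quad\iff\quad \exists\,[z]\in \mathrm{Im}(p_*)\text{ with }{\rm Deg}(z)=-k.
\]

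Second, I would analyze $\mathrm{Im}(p_*)$ as a $\Lambda[x]$-submodule of $\brI_*(Y)=\Lambda[\![x^{-1},x]$. Since $p_*$ is $\Lambda[x]$-linear by Proposition \ref{func-chain-map} and the $x$-action on $\brI_*(Y)$ raises ${\rm Deg}$ by one, any nonzero $[z_0]\in \mathrm{Im}(p_*)$ of ${\rm Deg}=d_0$ produces classes $x^j\cdot[z_0]\in \mathrm{Im}(p_*)$ of ${\rm Deg}=d_0+j$ for every $j\geq 0$. Therefore either $\mathrm{Im}(p_*)=0$ (in which case $\Gamma_Y(k)=\infty$ for all $k$, and $2h(Y)=-\infty$ by convention), or
\[
  d_{\min}(Y):=\min\{{\rm Deg}(z)\,:\,[z]\in \mathrm{Im}(p_*),\,[z]\neq 0\}\in \Z
\]
is well-defined and $\mathrm{Im}(p_*)$ contains a class of every degree $\geq d_{\min}(Y)$. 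Combined with the first step, this yields the reformulation $\Gamma_Y(k)<\infty \iff k\leq -d_{\min}(Y)$.

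Finally, the proposition follows once one identifies $-d_{\min}(Y)=2h(Y)$, which is precisely the content of the reformulation of Fr\o yshov's $h$-invariant in Remark \ref{ins-h-mon-h}. The main obstacle is thus to verify this identification by matching the two definitions. Using \eqref{p}, $\mathrm{Im}(p_*)$ is generated as a $\Lambda[x]$-module by the classes $\sum_{i<0}D_1U^{-i-1}(\alpha)\,x^{i}$ arising from cycles $\alpha\in C_*^{\pi}(Y)$, together with the classes coming from cycles of the ``reducible tower'' $\{(\gamma,\sum_{i\geq 0}a_ix^i)\}\subset \hrC_*^{\pi}(Y)$. The lowest ${\rm Deg}$ attained by this submodule is controlled by the largest $N$ for which some cycle $\alpha\in C_*^{\pi}(Y)$ satisfies $D_1U^{N-1}(\alpha)\neq 0$ while all smaller iterates vanish; matching this quantity (up to the factor of $2$ relating ${\rm Deg}$ to the Floer grading) with Fr\o yshov's original numerical invariant from \cite{Fro:h-inv} completes the proof.
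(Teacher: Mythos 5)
The overall strategy here — reformulate the finiteness of $\Gamma_Y(k)$ as a homological condition involving $\mathrm{Im}(p_*)\subset\brI_*(Y)$, use the $\Lambda[x]$-module structure to show $\mathrm{Im}(p_*)$ is concentrated in degrees above a cutoff $d_{\min}(Y)$, and then match $-d_{\min}(Y)$ with $2h(Y)$ — is indeed the same skeleton as the paper's proof, with $\mathrm{Im}(p_*)$ playing the role of the non-emptiness of the sets $\mathcal L^\pi_k$. Steps one and two are fine: $\widecheck d(w)=i(z)$ does say $i_*([z])=0$, and multiplying by $x$ moves such a class up one unit in $\mathrm{Deg}$ while staying in $\ker(i_*)=\mathrm{Im}(p_*)$, which is precisely the monotonicity the paper gets from applying $U$ to elements of $\mathcal L^\pi_k$.

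However, the third step has a genuine gap. You assert that "$-d_{\min}(Y)=2h(Y)$ \ldots is precisely the content of the reformulation of Fr\o yshov's $h$-invariant in Remark \ref{ins-h-mon-h}," but it is not: the displayed formula in that remark is $h(Y)=\max\{k \mid \exists\, z,\ \mathrm{Deg}(z)=-k,\ i_*(z)\neq 0\}$, which involves the complementary condition $i_*(z)\neq 0$ (not $i_*(z)=0$) and has $h(Y)$ rather than $2h(Y)$ on the left. Your quantity $-d_{\min}(Y)=-\min\{N \mid \exists\, z\neq 0,\ \mathrm{Deg}(z)=N,\ i_*(z)=0\}$ is a different extremal quantity, and it cannot be read off from that remark without further argument. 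What actually supplies $-d_{\min}(Y)=2h(Y)$ is Lemma \ref{non-empty-special-set}: after identifying (as you do in your final paragraph) that $N\in -d_{\min}(Y)+\Z_{\geq 0}$ iff $\mathcal L^\pi_{-N}\neq\emptyset$, one needs the statement that $\max\{k\mid \mathcal L^\pi_k\neq\emptyset\}=2h(Y)$, and that lemma proves it by passing to rational coefficients and citing \cite[Proposition 4]{Fro:h-inv}. You flag exactly this matching as "the main obstacle" but do not carry it out; it is the substantive step, and the offhand parenthetical about "the factor of 2 relating $\mathrm{Deg}$ to the Floer grading" is also off (the $x$-action shifts $\mathrm{Deg}$ by $+1$ and Floer grading by $-4$; the factor of $2$ in $2h(Y)$ comes from Fr\o yshov's normalization, not from a grading comparison). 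Finally, you do not address why the limit over perturbations $\pi\to 0$ in \eqref{Gamma0} converges to a finite number when $\mathcal L^\pi_k\neq\emptyset$; the paper handles this by invoking the comparison argument from Proposition \ref{hom-inv-Gamma}, and that step is needed here too.
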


Before giving the proof of the above proposition, we give an interpretation for $h$ in terms of some of the terminology introduced in this paper:

\begin{lemma}\label{non-empty-special-set}
	For any integral homology $Y$ and any admissible perturbation $\pi$, the integer $2h(Y)$ is equal to
	the largest value of $k$ such that the set $\mathcal L^\pi_k$ is non-empty.
\end{lemma}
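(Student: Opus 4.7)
The plan is to recognize that membership in $\mathcal L^\pi_k$ encodes exactly the ``$U$-tower'' data measured by Fr\o yshov's $h$-invariant, and then match the chain-level description with the standard definition of $h(Y)$.

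First I would unpack the hypothesis. If $\alpha\in\mathcal L^\pi_k$, the identity $[d,U]=D_2D_1$ (read off from the relation $\widetilde d^{\,2}=0$ applied to the matrix \eqref{shape-SO-com}), together with $D_1U^j\alpha=0$ for $j<k-1$, shows inductively that each iterate $U^j\alpha$ for $0\le j\le k-1$ is again a cycle in $C^\pi_*(Y)$. Thus an element of $\mathcal L^\pi_k$ is a homogeneous representative of a length-$k$ $U$-tower of cycles on which $D_1$ vanishes until the top rung, where it is non-zero. The homogeneity condition built into $\mathcal L^\pi_k$ is harmless: given any cycle $\widetilde\alpha$ realizing such a tower, Lemma \ref{proj-homog} guarantees that the weight-projection ${\rm P}_{r,4k-3}$ commutes with $d$, $U$, and $D_1$, so some homogeneous summand of $\widetilde\alpha$ still satisfies $D_1U^{k-1}\neq 0$, and rescaling by a power of $\lambda$ produces a bona fide element of $\mathcal L^\pi_k$.

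Second I would invoke Fr\o yshov's characterization of $h(Y)$ from \cite{Fro:h-inv}: for integral homology spheres, $2h(Y)$ is precisely the length of the longest such $U$-tower. Equivalently, in the language of the exact triangle \eqref{ftb-exact} (cf.\ Remark \ref{ins-h-mon-h}), $2h(Y)$ is the largest integer $k$ for which there is a class $[(\alpha,0)]\in\hrI_*(Y)$ whose image $p_*[(\alpha,0)]\in\brI_*(Y)=\Lambda[\![x^{-1},x]$ has ${\rm Deg}$ equal to $-k$; by the explicit formula \eqref{p}, this coincides exactly with the vanishing/non-vanishing pattern defining $\mathcal L^\pi_k$. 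Since $h(Y)$ is already known to be independent of the admissible perturbation, so too is the maximum value of $k$ with $\mathcal L^\pi_k\neq\varnothing$.

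The main technical point I anticipate is the homological interpretation of the chain-level tower condition: the vanishing $D_1U^j\alpha=0$ is not preserved by arbitrary modifications by boundaries in $C^\pi_*(Y)$, since $D_1Ud\beta=-D_1D_2D_1\beta$ need not vanish. The clean bypass is to work inside the equivariant complex $\hrC_*^{\pi}(Y)$: multiplication by $x^k$ absorbs the lower-order terms $D_1U^j\alpha$ into the $\Lambda[x]$ summand and converts the tower hypothesis into the genuinely homological statement that $p_*[x^k\cdot(\alpha,0)]\in\brC_*^{\pi}(Y)$ has a non-zero constant term. This reformulation, already implicit in the surgery-style triangle of \eqref{surg-triangle-top}, is what makes the identification of the largest non-empty $\mathcal L^\pi_k$ with $2h(Y)$ natural.
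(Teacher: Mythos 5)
The core of your argument and the paper's proof coincide: both are essentially reductions to Fr\o yshov's result in \cite{Fro:h-inv}, and the $U$-tower picture you describe is exactly the data that the set $\mathcal L^\pi_k$ encodes. That said, your route differs in two noticeable ways, and one of them leaves a real gap.

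First, the paper's proof is a clean coefficient-change argument. It introduces the rational Floer complex $(\fC_*^\pi(Y),\fd)$ with operators $\fU$, $\fD_1$, $\fD_2$ obtained by simply dropping the powers of $\lambda$, defines the rational analogue $\mathcal K^\pi_k$ of $\mathcal L^\pi_k$, observes that the homogeneity condition built into $\mathcal L^\pi_k$ makes evaluation $\lambda\mapsto 1$ a \emph{bijection} between $\mathcal L^\pi_k$ and $\mathcal K^\pi_k$, and then cites \cite[Proposition 4]{Fro:h-inv}, which is stated precisely in terms of $\mathcal K^\pi_k$. Your proof never makes this $\Lambda\to\Q$ reduction explicit. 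Fr\o yshov's tower characterization is a statement over the rationals, not over the Novikov field, so to quote it directly for $\mathcal L^\pi_k$ you need exactly the bijection the paper establishes. Lemma \ref{proj-homog} controls the \emph{homogeneity} in the $\lambda$-grading, but by itself it does not give you the matching with the rational set $\mathcal K^\pi_k$; that requires the observation that for homogeneous elements the dictionary is invertible, which is the paper's key step. This is the genuine gap in your write-up.

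Second, your discussion of the ``main technical point'' is a red herring for this particular lemma. Whether a cycle $\alpha$ with the required $D_1U^j$ pattern can be modified by a boundary of $C_*^\pi(Y)$ is irrelevant here, because $\mathcal L^\pi_k$ is a chain-level condition, as is $\mathcal K^\pi_k$ in Fr\o yshov's statement. You are right that multiplication by $x$ in the equivariant complex and the map $p$ of formula \eqref{p} give a clean homological restatement, but that restatement is Remark \ref{ins-h-mon-h}, which the paper derives \emph{from} this lemma. Citing it as your route into the proof risks circularity; and in any case you only verify the identification for $k>0$, where $p(\alpha,0)$ has negative degree — the case $k\le 0$, where $\mathcal L^\pi_k$ has a different shape involving the data $\{a_0,\dots,a_{-k}\}$ and the condition $d\alpha=\sum_i U^i D_2(a_i)$, is not addressed. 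The paper's proof handles both ranges of $k$ uniformly through the $\mathcal K^\pi_k$ comparison.

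In short: the citation of Fr\o yshov is the right move and matches the paper, but you should add the explicit $\lambda\mapsto 1$ bijection between $\mathcal L^\pi_k$ and the rational set $\mathcal K^\pi_k$ (this is what makes the citation apply), drop the digression about boundary modifications, and either extend the equivariant reformulation to cover $k\le 0$ or remove it in favor of the direct chain-level argument.
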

\begin{proof}
	For any admissible perturbation $\pi$, let $(\fC_*^\pi(Y),\fd)$ be the standard 
	Floer chain complex defined over rational numbers.
	The definition of this complex is similar to $C_*^\pi(Y)$ with the difference that 
	we use rational numbers as the coefficient ring and we
	do not include any power of $\lambda$ in the definition of 
	the differential $\fd$. By dropping the powers of $\lambda$ from the definition 
	of $D_1$, $D_2$ and $U$, we can also define:
	\[
	  \fD_1:\fC_1^\pi(Y) \to \Q\hspace{1cm}\fD_2:\Q \to \fC_4^\pi(Y) \hspace{1cm}\fU:\fC_*^\pi(Y) \to \fC_{*-4}^\pi(Y)
	\]
	Using these operators, we define:
	\[
	  \mathcal K^\pi_k:=\{\alpha\mid\alpha\in \fC_{4k-3}^\pi,\,\,
	\fd \alpha=0,\,\, (k-1)=\min_{\fD^1\fU^{j}(\alpha)\neq 0}(j)\}
	\]
	for positive values of $k$, and: 
	\begin{align*}
		\mathcal K^\pi_k:=\{(\alpha,\{s_0,s_1,\dots,s_{-k}\}) \mid &\alpha \in \fC_{4k-3}^\pi(Y),\,\,, s_i\in \Q,\,\,
	\fd \alpha=\sum_{i=0}^{-k}\fU^{i}\fD_2(s_i), \nonumber\\
	&\text{ $s_i\neq 0$ only if $i\equiv k$ mod $2$.} \}
	\end{align*}	
	Evaluation of $\lambda$ at $1$ gives a bijective map from $ \mathcal L^\pi_k$ to $ \mathcal K^\pi_k$ for any integer $k$. 
	It is also shown in \cite[Proposition 4]{Fro:h-inv} that the set $ \mathcal K^\pi_k$ is non-empty only if 
	$k\leq 2h(Y)$.\footnote{In \cite{Fro:h-inv}, 
	the cohomological convention is used in the definition of Floer chain complexes. The reader should take that into account 
	for comparing \cite{Fro:h-inv} and the present article.} Thus 
	the largest $k$ with $\mathcal L^\pi_k$ being non-empty is equal to $2h(Y)$.
\end{proof}

\begin{proof}[Proof of Proposition \ref{possible-values-Gamma}]
	Lemma \ref{non-empty-special-set} shows that if $k>2h(Y)$, then $\mathcal L^\pi_k$ is empty for any admissible perturbation 
	$\pi$. This lemma also implies that if $k\leq 2h(Y)$, then $\mathcal L^\pi_k$ is non-empty for any given $\epsilon$-admissible 
	perturbation $\pi$. Therefore, the infimum in \eqref{Gamma1} or \eqref{Gamma1-neg} is finite for any given $\pi$.
	Now we can follow the argument in the proof of 
	Proposition \ref{hom-inv-Gamma} and obtain a uniform upper bound for the infimum of \eqref{Gamma1} or \eqref{Gamma1-neg} 
	for any other $\epsilon$-admissible perturbation. Thus $\Gamma_Y(k)$ is a finite number.
\end{proof}

\begin{remark}\label{ins-h-mon-h}
	Combining Lemma \ref{non-empty-special-set} and the discussion of the previous section shows that 
	$h(Y)$ is given by the following identity:
	\begin{align*}
	  h(Y)&=\max \{k\mid\exists z\in \brI_*(Y) \text{ with ${\rm Deg}(z)=-k$ and } i_*(z)\neq 0\}\\
	  &=\min \{k\mid\forall z\in \brI_*(Y) \text{ with ${\rm Deg}(z)=-k$ and $z\in {\rm image}(j_*)$}\}-1
	\end{align*}
	This definition of $h$ is similar to the standard definition of monopole $h$-invariant 
	\cite[Section 39]{KM:monopoles-3-man}
	 and the correction term in Heegaard Floer homology \cite{OzSz:d-inv}.
\end{remark}

\begin{prop}\label{CS-val}
	For any integral homology sphere $Y$ and any integer $k$, either $\Gamma_Y(k)=\infty$, $\Gamma_Y(k)=0$, or there is an irreducible 
	flat connection $\alpha$ such that $\Gamma_Y(k)$ is equal to $\CS(\alpha)$ mod $\Z$.
\end{prop}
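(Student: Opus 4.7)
Assume $\Gamma_Y(k)$ is finite and strictly positive; the cases $\Gamma_Y(k)\in\{0,\infty\}$ appear explicitly in the conclusion. The strategy is to track the residue of $\mdeg(z)-\mdeg(w)$ modulo $\Z$ along a nearly optimal sequence of special pairs. Pick a sequence of $\epsilon_i$-admissible perturbations $\pi_i$ with $\epsilon_i\to 0$ together with special pairs $(z_i,w_i)$ of degree $-k$ such that $\mdeg(z_i)-\mdeg(w_i)\to\Gamma_Y(k)$, and write $w_i=(\alpha_i,0)$. Because $\Gamma_Y(k)>0$ I may assume $\alpha_i\ne 0$; otherwise $\mdeg(w_i)=\infty$ would drive the infimum to $-\infty$. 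By Lemma \ref{proj-homog} the projection to the homogeneous component $C^{\pi_i}_{(0,4k-3)}(Y)$ does not increase the difference, so I may take $\alpha_i=\sum_l s_l^{(i)}\lambda^{r_l^{(i)}}\alpha_l^{(i)}$ with $r_l^{(i)}\equiv -\CS(\alpha_l^{(i)})\pmod{\Z}$, as in Definition \ref{homog-elt}.

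The central step is algebraic. By the energy identity \eqref{energy-CS}, every $\lambda$-exponent appearing in $D_1U^{m}(\alpha_l^{(i)})$ is congruent modulo $\Z$ to $\CS(\alpha_l^{(i)})-\CS(\Theta)=\CS(\alpha_l^{(i)})$; combined with the prefactor $\lambda^{r_l^{(i)}}$, every exponent in a term $D_1U^{-j-1}(\alpha_i)$ appearing as the $x^j$-coefficient of $z_i$ for $j\le -1$ lies in $\Z$. When $k\le 0$, the remaining coefficients $a_j$ with $0\le j\le -k$ are rational multiples of $\lambda^{(-k-j)/2}$, and $(-k-j)/2$ is an integer since $j\equiv k\pmod 2$. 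Hence $\mdeg(z_i)\in\Z$, while $\mdeg(w_i)=\mdeg(\alpha_i)=r_{l_0(i)}^{(i)}$ for whichever index $l_0(i)$ attains the minimum among indices $l$ with $s_l^{(i)}\ne 0$. Setting $\beta_i:=\alpha_{l_0(i)}^{(i)}$, which is an irreducible critical point of $\CS+f_{\pi_i}$, gives
\[
\mdeg(z_i)-\mdeg(w_i)\ \equiv\ -r_{l_0(i)}^{(i)}\ \equiv\ \CS(\beta_i)\pmod{\Z}.
\]

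Finally, pass to the limit. The perturbed flatness equation for $\beta_i$ yields $\|F(\beta_i)\|_{L^2}=O(\epsilon_i)\to 0$, so Uhlenbeck compactness, after passing to a subsequence and choosing gauges, produces an $L^2_l$-limit $\beta^\infty$ which is a flat $\SU(2)$-connection on $Y$. Using property (iv) of Proposition \ref{regular-pert}, I may assume that $f_{\pi_i}$ vanishes on a fixed neighborhood $U$ of $\Theta$; any critical point of $\CS+f_{\pi_i}$ lying in $U$ is then genuinely flat, hence equal to $\Theta$ by the non-degeneracy of the trivial connection. Since the $\beta_i$ are irreducible they lie outside $U$, so $\beta^\infty\ne\Theta$ and is therefore an irreducible flat connection. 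Continuity of $\CS:\mathcal B(Y)\to\R/\Z$ gives $\CS(\beta_i)\to\CS(\beta^\infty)\pmod{\Z}$, and together with $\mdeg(z_i)-\mdeg(w_i)\to\Gamma_Y(k)$ in $\R$ this yields $\Gamma_Y(k)\equiv\CS(\beta^\infty)\pmod{\Z}$, as required. The main analytic obstacle is the compactness step, specifically marrying Uhlenbeck compactness for low-curvature connections with the non-degeneracy of $\Theta$ in order to prevent the extracted critical points from collapsing onto the trivial connection; property (iv) of Proposition \ref{regular-pert} is the key input that makes this possible.
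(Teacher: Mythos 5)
Your argument is correct and follows essentially the same route as the paper's: reduce to special pairs and homogeneous representatives via Lemma \ref{proj-homog}, observe that after normalization $\mdeg(z)-\mdeg(w)$ is congruent mod $\Z$ to the (perturbed) Chern--Simons value of the lowest-$\mdeg$ generator appearing in $\alpha$, and then let the perturbations shrink to zero. The one place you diverge is the limiting step: the paper simply invokes \cite[Lemma 3.8]{KM:yaft} for the fact that $\CS$-values of critical points of $\CS+f_{\pi_i}$ accumulate only on $\CS$-values of genuine flat connections, whereas you unpack this via Uhlenbeck compactness for three-dimensional connections with curvature going to zero, together with Proposition \ref{regular-pert}(iv) and the non-degeneracy of $\Theta$ to prevent the extracted sequence from collapsing onto the trivial connection --- a legitimate and somewhat more self-contained version of the same idea.
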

\begin{proof}
	Suppose $\pi$ is an admissible perturbation of the Chern-Simons functional of $Y$. Firstly let $k$ be a positive number and 
	$\alpha\in \mathcal L_k^\pi$. The condition $\alpha\in C_{(0,4k-3)}^\pi(Y)$ implies that:
	\[
	  \alpha=s_1\lambda^{r_1}\alpha_1+s_2\lambda^{r_2}\alpha_2+\dots+s_j\lambda^{r_j}\alpha_j
	\]
	where $\alpha_1$, $\dots$, $\alpha_k$ are the critical points of $\CS+f_\pi$ with Floer grading $4k-3$, $s_1$, $\dots$, $s_k$ 
	are rational numbers. The exponent $r_i$ is introduced in Definition \ref{homog-elt} and is equal to $-\CS(\alpha_i)$ mod $\Z$.
	Since $\alpha\in C_{(0,4k-3)}^\pi(Y)$, we have $\mdeg(D_1U^{k-1}(\alpha))=0$. We also have:
	\[
	  \mdeg(\alpha)=\min_{s_i\neq 0}(r_i).
	\]
	In particular, we have:
	\[
	  \inf_{\alpha\in\mathcal L_k^\pi}\left(\mdeg(D_1U^{k-1}(\alpha))-\mdeg(\alpha)\right)\in \{-r_1,\dots,-r_k\}.
	\]
	Therefore, there is an irreducible critical point $\alpha_i$ of $\CS+f_\pi$ with index $4k-3$ such that the above infimum is 
	equal to $\CS(\alpha_i)$ mod $\Z$. As we let $\pi\to 0$, the values of the Chern-Simons functional on the critical points 
	of $\CS+f_\pi$ converge to the finite set of values of the Chern-Simons functional on irreducible $\SU(2)$ flat connections 
	\cite[Lemma 3.8]{KM:yaft}.
\end{proof}

\begin{remark}
	It is natural to ask whether there is an integral homology sphere $Y$ such that $\Gamma_Y$ takes irrational values. 
	Proposition \ref{CS-val} implies that if $\Gamma_Y$ takes an irrational value, then the value of the Chern-Simons functional 
	of an $\SU(2)$-flat connection is irrational. Currently, it is unknown whether there is such a flat connection on a 3-manifold $Y$.
	For example, the values of the Chern-Simons functional for any plumbed 3-manifold
	takes rational values on $\SU(2)$-flat connections. 
	Consequently, if $\Gamma_Y$ is not rational valued, then $Y$ has to be linearly independent of plumbed 3-manifolds.
\end{remark}

\begin{definition}\label{tau}
	For any integral homology sphere $Y$, $\tau(Y)$ is defined to be:
	\begin{equation}\label{z-tau}
		\tau(Y):=\inf_{z}\{\mathcal E(z)\}
	\end{equation}
	where the infimum is taken over all paths $z$ from an irreducible flat connection $\alpha$ to the trivial 
	connection $\Theta$ such that the moduli space $\mathcal M_z(\alpha,\Theta)$ is non-empty.
\end{definition}	

Given a path $z$ from $\alpha$ to $\Theta$ as in \eqref{z-tau}, $\mathcal E(z)$ is equal to $\CS(\alpha)$ mod $\Z$. Thus $\tau(Y)$ takes values in a discrete set because the Chern-Simons functional takes only finitely many values on the set of flat $\SU(2)$-connections. This implies that $\tau(Y)$ can be realized by the topological energy of an ASD connection from an irreducible flat connection $\alpha$ to the trivial connection. In particular, this constant is positive and we have:
\begin{equation} \label{tau-gr}
 	\tau(Y)\geq \min\{r\mid r\in \R^{>0},\,r\equiv\CS(\alpha)\,\,\text{ mod $\Z$\,\, for a flat connection }\alpha\}
\end{equation}
We can use $\tau(Y)$ to give a constraint for the values of $\Gamma_Y$ at positive integers:
\begin{prop} \label{lower-bound-tau}
	For any integral homology sphere $Y$, we have $\Gamma_Y(1)\geq \tau(Y)$. Consequently, for any positive 
	integer $k$, we have $\Gamma_Y(k)\geq \tau(Y)$.
\end{prop}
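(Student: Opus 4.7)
The plan is to use the monotonicity of $\Gamma_Y$ established in the preceding proposition to reduce the second assertion to the first: once $\Gamma_Y(1)\geq \tau(Y)$ is known, the inequality $\Gamma_Y(k)\geq \Gamma_Y(1)\geq \tau(Y)$ for every positive $k$ is immediate. I will therefore concentrate on the case $k=1$.

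By \eqref{Gamma1} together with \eqref{Lpik-pos}, it suffices to show that for every $\delta>0$ there is an $\epsilon>0$ such that for every $\epsilon$-admissible perturbation $\pi$ and every $\alpha\in \mathcal L_1^\pi$ we have $\mdeg(D_1(\alpha))-\mdeg(\alpha)\geq \tau(Y)-\delta$. Writing $\alpha=\sum_i s_i \lambda^{r_i}\alpha_i$ as in Definition~\ref{homog-elt}, where $\alpha_i$ runs over the Floer-grading-$1$ critical points of $\CS+f_\pi$ and $r_i=\mathcal E(z_i)$, Lemma~\ref{proj-homog}(iii) combined with the assumption $D_1(\alpha)\neq 0$ tells us that $D_1(\alpha)$ is a nonzero rational multiple of $\lambda^0$, so $\mdeg(D_1(\alpha))=0$ and $\mdeg(\alpha)=\min\{r_i\mid s_i\neq 0\}$. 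Expanding
\[
D_1(\alpha)=\sum_i s_i \lambda^{r_i}\sum_{z:\alpha_i\to\Theta}\#\breve{\mathcal M}_z^\pi(\alpha_i,\Theta)\,\lambda^{\mathcal E(z)},
\]
the vanishing of every exponent except possibly $0$ (by homogeneity), together with non-triviality of the total sum, forces the existence of a pair $(i_0,z_0)$ with $s_{i_0}\neq 0$, $\mathcal E(z_0)=-r_{i_0}$, and $\breve{\mathcal M}_{z_0}^\pi(\alpha_{i_0},\Theta)$ non-empty.

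The crux will be a compactness argument giving a uniform lower bound $\mathcal E(z_0)\geq \tau(Y)-\delta$ for such a pair whenever $\pi$ is sufficiently small. I will argue by contradiction: were this not the case, there would be a sequence $\pi_n$ of $\epsilon_n$-admissible perturbations with $\epsilon_n\to 0$, index-$1$ critical points $\alpha^{\pi_n}$ of $\CS+f_{\pi_n}$, and paths $z_n$ whose $0$-dimensional moduli spaces $\breve{\mathcal M}_{z_n}^{\pi_n}(\alpha^{\pi_n},\Theta)$ are non-empty but with $\mathcal E(z_n)< \tau(Y)-\delta$. Standard Uhlenbeck compactness for the perturbed ASD equation, combined with the convergence of critical points of $\CS+f_{\pi_n}$ to unperturbed flat connections as $|\pi_n|_{\mathcal P}\to 0$ (as in \cite[Lemma~3.8]{KM:yaft}), yields a subsequential limit which is a broken unperturbed ASD trajectory from an irreducible $\SU(2)$ flat connection to $\Theta$ with total topological energy at most $\tau(Y)-\delta$, in contradiction with Definition~\ref{tau}. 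Granting this bound, $r_{i_0}=-\mathcal E(z_0)\leq -\tau(Y)+\delta$, and since $\mdeg(\alpha)\leq r_{i_0}$ we deduce $\mdeg(D_1(\alpha))-\mdeg(\alpha)\geq \tau(Y)-\delta$.

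The main obstacle is making the compactness step precise when the holonomy perturbations, the critical points, and the perturbed ASD trajectories all vary simultaneously, and ruling out escape of energy along the cylindrical ends; this is a routine adaptation of the perturbation theory of \cite{KM:yaft,Don:YM-Floer} to the $\Lambda$-coefficient setting, and no new geometric input is required. Taking $\delta\to 0$ and passing to the limit $|\pi|_{\mathcal P}\to 0$ in the definition of $\Gamma_Y(1)$ then completes the proof of the first inequality, and monotonicity finishes the argument for $k\geq 1$.
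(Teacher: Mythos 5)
Your proposal is correct and takes essentially the same route as the paper: monotonicity reduces to $k=1$, and for $\alpha\in\mathcal L_1^\pi$ the homogeneity structure (Lemma~\ref{proj-homog}(iii)) gives $\mdeg(D_1(\alpha))=0$ and exhibits a nonempty index-one perturbed trajectory space to $\Theta$ whose energy is then controlled as $\pi\to 0$. The compactness step you identify as the crux is precisely the paper's Lemma~\ref{tau-pos}; it is a bit more delicate than a naive Uhlenbeck limit, since a careless limit could slide entirely into the trivial stratum --- the paper normalizes by translating so that a fixed amount of energy sits on the outgoing end (using the exponential-decay estimate of Lemma~\ref{exp-decay}) and iterates when the broken limit passes through $\Theta$ --- and in the end Lemma~\ref{tau-pos} produces a single \emph{unbroken} $A_0\in\mathcal M_{z_0}(\alpha_0,\Theta)$ with $\mathcal E(A_0)\le\limsup_i\mathcal E(A_i)$, which plugs directly into Definition~\ref{tau} (an infimum over unbroken trajectories from an irreducible flat connection to $\Theta$). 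Your broken-trajectory phrasing still suffices, since such a chain from an irreducible flat connection to $\Theta$ must contain at least one unbroken segment of exactly the required type, whose energy is dominated by the total; but you should be aware that what you call a ``routine adaptation'' is the substance of Lemma~\ref{tau-pos}, and the $\Lambda$-coefficients play no role in it --- only the geometric compactness does.
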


The proof of the above proposition needs some preparation. Firstly we start with a standard exponential decay result about instantons on tubes:
\begin{lemma}\label{exp-decay}
	Let $\alpha$ be a non-degenerate $\SU(2)$-flat connection on an integral homology sphere $Y$.
	There are constants $\epsilon_0$, $\epsilon_1$, $C_l$ and $\delta$ such that the following holds:
	\begin{itemize}
		\item[(i)] Suppose $A$ is an ASD connection on $(0,\infty)\times Y$ such that the $L^2$-distance
		between $A|_{\{t\}\times Y}$, for $t\in (0,1)$, and $\alpha$ is less than $\epsilon_0$ and 
		$|\!|F(A)|\!|_{L^2((0,\infty)\times Y)}<\epsilon_1$.
		Then $A$ is gauge equivalent to a connection of the form $\alpha+a$ where $a$ is a 1-form on 
		$(0,\infty)\times Y$ with values in $\su(2)$ and:
		\begin{equation}\label{exp-decay-ineq-1}
		  |\nabla^la|(t,y)\leq C_le^{-\delta\cdot t }|\!|F(A)|\!|_{L^2((0,1)\times Y)}.
		\end{equation}
		for $t\in (\frac{1}{2},\infty)$.
		\item[(ii)] Suppose $A$ is an ASD connection on $(-T,T)\times Y$, for $T>1$, such that the $L^2$-distance
		between $A|_{\{t\}\times Y}$, for $t\in (-T,-T+1)$, and $\alpha_0$ is less than $\epsilon_0$ and 
		$|\!|F(A)|\!|_{L^2((-T,T)\times Y)}<\epsilon_1$.
		Then $A$ is gauge equivalent to a connection of the form $\alpha+a$ where $a$ is a 1-form on 
		$(-T,T)\times Y$ with values in $\su(2)$ and:
		\begin{equation}\label{exp-decay-ineq-2}
		  |\nabla^la|(t,y)\leq C_le^{-\delta (T-|t|) }\(|\!|F(A)|\!|_{L^2((-T,-T+1)\times Y)}+|\!|F(A)|\!|_{L^2((T-1,T)\times Y)}\).
		\end{equation}
		for $t\in (-T+\frac{1}{2},T-\frac{1}{2})$.
	\end{itemize}
\end{lemma}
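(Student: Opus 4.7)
The plan is the standard exponential-decay-near-a-critical-point argument for the Chern--Simons gradient flow. First, put $A$ in temporal gauge on the tube, so that $A$ restricted to $\{t\}\times Y$ becomes a path of connections $B(t)$ on $Y$, and the ASD equation becomes the downward gradient-flow equation $\partial_t B = -*F(B)$ for the Chern--Simons functional on $Y$. A standard small-energy patching argument (Uhlenbeck-type gauge fixing applied slab by slab, then glued), together with the hypothesis $\|F(A)\|_{L^2}<\epsilon_1$ and the fact that flat connections near $\alpha$ are isolated by the non-degeneracy assumption $H^1(Y;ad_\alpha)=0$, lets us further gauge-transform so that on the entire tube one has $B(t)=\alpha+a(t)$ with $a(t)$ small in $C^0$ and lying in the Coulomb slice $d_\alpha^* a(t)=0$.

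Second, linearize the gradient vector field at $\alpha$: on $\ker d_\alpha^*\subset \Omega^1(Y,ad_\alpha)$ it is given by the self-adjoint operator $L:=*d_\alpha$. The non-degeneracy hypothesis means $L$ is invertible, with spectrum bounded away from $0$; let $2\delta>0$ be its smallest absolute eigenvalue. In part (i) the ASD equation takes the form $\partial_t a = -La+Q(a)$ where $Q$ is quadratic in $a$. Smallness of $\|a\|_{C^0}$ lets the nonlinear term be absorbed into the spectral estimate, so the energy $E(t):=\tfrac{1}{2}\|a(t)\|_{L^2}^2$ satisfies a differential inequality of the form $\ddot E \geq 4\delta^2 E$. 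Together with the finiteness of the total energy $\|F(A)\|_{L^2((0,\infty)\times Y)}^2$ — which forces $E(t)\to 0$ — a classical ODE comparison yields $\|a(t)\|_{L^2(Y)}\leq C e^{-\delta t}\|a(0)\|_{L^2(Y)}$. The initial $L^2$ norm is in turn controlled by $\|F(A)\|_{L^2((0,1)\times Y)}$ via the Coulomb gauge condition and the Bianchi identity.

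Third, upgrade this $L^2$-decay to all derivatives. On each unit slab $[t-\tfrac14,t+\tfrac14]\times Y$ with $t\geq\tfrac12$, the ASD equation together with the Coulomb slice condition is an elliptic system for $a$ with small-norm coefficients, and interior elliptic regularity bounds $\|a\|_{C^l}$ on the slab by its $L^2$-norm on a slightly larger slab. Substituting the exponential $L^2$-estimate yields the pointwise bound \eqref{exp-decay-ineq-1}.

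For part (ii), the tube is finite, so we are solving a boundary-value problem rather than an initial-value problem; the argument is the same except that one must track both the contracting and expanding directions. The self-adjoint operator $L$ splits $\ker d_\alpha^*$ into its positive and negative eigenspaces; write $a(t)=a^s(t)+a^u(t)$ accordingly. The component $a^s$ satisfies an equation of gradient-descent type and decays forward in time: $\|a^s(t)\|\leq Ce^{-\delta(t+T)}\|a^s(-T)\|$. The component $a^u$ satisfies an equation of gradient-ascent type and decays \emph{backward} in time: $\|a^u(t)\|\leq Ce^{-\delta(T-t)}\|a^u(T)\|$. Adding these and bounding $\|a(\pm T)\|_{L^2}$ by $\|F(A)\|_{L^2}$ on the two unit-length end-slabs (as in part (i)) gives the $L^2$ version of \eqref{exp-decay-ineq-2}; the same elliptic bootstrap then gives the $C^l$ estimate. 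The only additional point is that closeness to $\alpha$ at $t\in(-T,-T+1)$ is assumed but closeness at the right-hand end is not; however, the small global $L^2$-norm of $F(A)$ together with the isolation of $\alpha$ among flat connections forces the trajectory to remain inside a fixed small neighborhood of $\alpha$ throughout, so the above splitting is valid at every $t$.

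The main obstacle is not the linear analysis, which is essentially an ODE computation with the spectral gap of $L$, but the preliminary gauge-fixing step: one needs a single gauge on the whole tube in which $B(t)-\alpha$ is simultaneously small in $C^0$ and in the Coulomb slice. Handling this requires careful use of Uhlenbeck's theorem on annular neighborhoods together with the non-degeneracy hypothesis to prevent the trajectory from drifting toward any other flat connection; once this is done, the rest of the argument proceeds in the standard fashion.
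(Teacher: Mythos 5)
Your proposal is correct and takes essentially the same approach as the paper, which simply cites Donaldson's book (Theorem 4.2, Propositions 4.3--4.4 of \emph{Floer Homology Groups in Yang--Mills Theory}) for the core spectral-gap/exponential-decay estimate and then supplies exactly the additional observation you flag at the end: that the hypotheses assume only $\alpha$ itself is non-degenerate (not all flat connections on $Y$), so one must show the trajectory cannot drift toward another flat connection, which the paper does by an Uhlenbeck-compactness argument by contradiction that keeps $A|_{\{t\}\times Y}$ in a fixed small $L^2_k$-neighborhood of $\alpha$ for all $t$ once $\epsilon_1$ is small. The one place your write-up is looser than the paper is this drift-control step: you assert that small total energy together with isolation of $\alpha$ "forces the trajectory to remain inside a fixed small neighborhood," whereas the paper proves it — pick the putative escape time $t_i$, translate to re-center, apply Uhlenbeck compactness on an annulus to produce a limiting flat connection at $L^2_k$-distance exactly $\eta$, a contradiction; continuity and the $\kappa$-separation of $\alpha$ from other flat connections then pin the nearby flat connection to be $\alpha$ itself — so you should spell this out rather than leave it as an assertion.
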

\begin{proof}
	This lemma is essentially proved in \cite[Theorem 4.2, Proposition 4.3 and Proposition 4.4]{Don:YM-Floer}. 
	In \cite{Don:YM-Floer}, it is assumed that $Y$ is $\SU(2)$-non-degenerate. We weaken 
	this assumption by requiring that only the given flat connection $\alpha$ is non-degenerate and make the additional 
	assumption that the $L^2$ distance of $A|_{\{t\}\times Y}$ and $\alpha$ for appropriate values of $t$ is bounded by 
	$\epsilon_0$. Since $\alpha$ is non-degenerate, the $L^2$-distance of $\alpha$ and any other flat connection is 
	greater than a positive number $\kappa$. Let $\epsilon_0$ be equal to $\frac{\kappa}{2}$. 
	We claim that for any positive constant $\eta$ smaller than $\epsilon_0$ and any positive integer $k$, 
	there is a constant $\epsilon_1$ such that for any ASD 
	connection $A$ as in part (i) (resp. part (ii)) of the lemma, the $L^2_k$-distance 
	between the connections 
	$A|_{\{t\}\times Y}$ and $\alpha$ is less than $\eta$ for any $t\in (\frac{1}{2},\infty)$ 
	(resp. $t\in (-T+\frac{1}{2},T-\frac{1}{2})$).
	
	We prove this claim for part (i). The proof for the other case is similar. Suppose there is a sequence of ASD connections $A_i$
	on $(0,\infty)\times Y$ such that $|\!|F(A_i)|\!|_{L^2((0,\infty)\times Y)}\to 0$, the $L^2$-distance of $A_i|_{\{t\}\times Y}$
	for $t\in (0,1)$ is less than $\epsilon_0$, and there is $t_i\in [1,\infty)$ such that the $L^2_k$-distance between 
	$A_i|_{\{t\}\times Y}$ and the space of flat connections is equal to $\eta$. 
	By Uhlenbeck compactness theorem, the connections 
	$A_i|_{(t_i-1,t_i-1)\times Y}$, after passing to a subsequence and changing gauge, are $C^\infty_{\loc}$-convergent to a 
	flat connection on $(-1,1)\times Y$. However, it is a contradiction because the $L^2_k$ distance between 
	$A_i|_{\{t\}\times Y}$ and any flat connection is at least $\epsilon_0$. This verifies the claim.
	Given this claim, the arguments of \cite{Don:YM-Floer} can be applied to prove the lemma without any change.
\end{proof}

\begin{lemma}\label{tau-pos}
	Suppose $\{\pi_i\}_i$ is a sequence of admissible perturbations of the Chern-Simons functional of $Y$ such that 
	$|\pi_i|_{\mathcal P}\to 0$. Suppose $A_i\in \mathcal M_{z_i}^{\pi_i}(\alpha_i,\Theta)$ where $\alpha_i$ 
	is an irreducible critical point of $\CS+f_{\pi_i}$ and $z_i$ is a path with index $1$. 
	We also assume that there is a constant $\nu$ such that $f_{\pi_i}$ 
	vanishes for connections whose $L^2$-distance to $\Theta$ is less than $\nu$.
	Then there exists 
	an irreducible flat connection $\alpha_0$, a path $z_0$ from $\alpha_0$ to $\Theta$, and 
	a (non-perturbed) ASD connection $A_0\in\mathcal M_{z_0}(\alpha_0,\Theta)$ such that:
	\begin{equation}\label{energy-bd}
	  \mathcal E(A_0)\leq \limsup_{i} \mathcal E(A_i)
	\end{equation}
\end{lemma}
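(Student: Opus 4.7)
The plan is to extract a bubble--tree limit of the sequence $\{A_i\}$, to observe that the pieces of the limit are genuine (non-perturbed) ASD connections because $|\pi_i|_{\mathcal P} \to 0$, and to isolate the piece that connects an irreducible flat connection to $\Theta$. Passing to subsequences without further comment, I may assume $E := \limsup_i \mathcal E(A_i) < \infty$ (else the statement is trivial) and in fact $\mathcal E(A_i) \to E$.

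First, I would apply the Uhlenbeck compactness theorem together with the standard Floer bubble--tree analysis to $\{A_i\}$ on $\R \times Y$. Since $|F(A_i)|_{L^2}$ is uniformly bounded and the perturbation data $\pi_i$ are uniformly small, after gauge transformations and passing to a subsequence $A_i$ converges in $C^\infty_{\loc}$ away from finitely many bubble points to a broken trajectory $(A_0^{(1)}, \ldots, A_0^{(k)})$ joined at flat connections $\alpha_0^{(1)} \to \alpha_0^{(2)} \to \cdots \to \alpha_0^{(k+1)}$. Because the perturbation term $G_0(A_i)$ in the perturbed ASD equation has $C^0$-norm tending to $0$ as $|\pi_i|_{\mathcal P} \to 0$, each piece $A_0^{(j)}$ satisfies the genuine ASD equation. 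The exponential decay statement of Lemma \ref{exp-decay}, applied uniformly in $i$ for $i$ large enough that the perturbation is small, guarantees that each piece has well-defined flat asymptotic limits, that the right end of the broken trajectory equals $\Theta$, and that the left end $\alpha_0^{(1)}$ is a limit modulo gauge of $\{\alpha_i\}$.

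Next, I would verify that $\alpha_0^{(1)}$ is irreducible. Let $\kappa > 0$ denote the minimum $L^2$-distance modulo gauge between $\Theta$ and any other flat connection on $Y$; this is positive because $\Theta$ is a non-degenerate critical point of $\CS$ on $\mathcal B(Y)$ (the relevant cohomology group $H^1(Y;\su(2))$ vanishes since $Y$ is an integral homology sphere). If the $L^2$-distance from $\alpha_i$ to $\Theta$ modulo gauge were strictly less than $\min(\nu,\kappa)$, then by hypothesis $f_{\pi_i}$ would vanish near $\alpha_i$, forcing $\alpha_i$ to be a genuine flat connection and hence gauge-equivalent to $\Theta$, contradicting irreducibility of $\alpha_i$. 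Hence each $\alpha_i$ lies at distance at least $\min(\nu,\kappa)$ from $\Theta$ modulo gauge, and so does the limit $\alpha_0^{(1)}$. Because the only reducible flat connection on $Y$ is $\Theta$ (since $H_1(Y;\Z) = 0$ kills every $\U(1)$-representation of $\pi_1(Y)$), the limit $\alpha_0^{(1)}$ is irreducible.

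Finally, to single out the desired piece, let $j^*$ be the largest index with $\alpha_0^{(j^*)}$ irreducible; this exists since $\alpha_0^{(1)}$ already qualifies. By maximality $\alpha_0^{(j^*+1)}$ is reducible, hence equal to $\Theta$. Set $\alpha_0 := \alpha_0^{(j^*)}$, $A_0 := A_0^{(j^*)}$, and let $z_0$ be the path from $\alpha_0$ to $\Theta$ determined by $A_0$. Then $A_0 \in \mathcal M_{z_0}(\alpha_0,\Theta)$ is a genuine (non-perturbed) ASD connection, and conservation of energy in the bubble--tree limit gives
\[
  \mathcal E(A_0) \;\leq\; \sum_{j=1}^{k}\mathcal E\bigl(A_0^{(j)}\bigr) + (\text{total bubble energy}) \;=\; E \;=\; \limsup_i \mathcal E(A_i),
\]
as required. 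The main technical obstacle is the bubble--tree convergence in the presence of the holonomy perturbations; the key input is that $|\pi_i|_{\mathcal P}\to 0$ implies the perturbation term $G_0(A_i)$ tends to $0$ in $C^0$, so that the classical Uhlenbeck compactness and the exponential decay of Lemma \ref{exp-decay} both apply uniformly in $i$, the vanishing of $f_{\pi_i}$ near $\Theta$ being used only in the previous paragraph to ensure irreducibility of the limit.
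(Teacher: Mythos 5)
Your proposal takes a genuinely different route from the paper's, and it has a real gap.

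\textbf{What the paper does.} The paper's proof never invokes a general Floer--Gromov/bubble-tree compactness theorem. Instead, it proceeds by a ``peel-from-the-$\Theta$-end'' argument. In Step~2 it chooses, for each $i$, a translation parameter $T_i$ so that $A_i$ is genuinely ASD on $(T_i,\infty)\times Y$ (because $f_{\pi_i}$ vanishes near $\Theta$) and carries a fixed small amount of curvature $\epsilon$ there; the exponential decay of Lemma~\ref{exp-decay} is applied only near $\Theta$, which is non-degenerate for the \emph{unperturbed} Chern--Simons functional, so those constants are uniform in $i$. After translating, a weak limit $A_0'$ is extracted which is asymptotic to $\Theta$ at $+\infty$ and to some flat connection at $-\infty$ (by Donaldson's Theorem 4.18, which requires no non-degeneracy). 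If the $-\infty$ limit is irreducible, the proof ends; if it is $\Theta$, the piece has energy at least one instanton's worth, curvature is shown to escape at $-\infty$ (otherwise $\alpha_i \to \Theta$, contradicting that $\alpha_i$ are irreducible and stay $\ge \nu$ from $\Theta$), and the argument peels again. Termination follows because the total energy is bounded while each trivial-to-trivial piece costs at least $8\pi^2$.

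\textbf{Where your argument has a gap.} You invoke ``the standard Floer bubble-tree analysis'' to produce a broken trajectory $(A_0^{(1)},\ldots,A_0^{(k)})$ joined at flat connections with $\alpha_0^{(1)}$ equal to a limit of the $\alpha_i$. That analysis is standard \emph{under a non-degeneracy hypothesis} on the critical points; the present lemma is precisely the technical tool the paper needs in the setting where the irreducible flat connections on $Y$ may be degenerate, and the whole point of the peeling argument is to avoid assuming that compactness. Concretely, Lemma~\ref{exp-decay} is stated only near a fixed non-degenerate flat connection; the $\alpha_i$ are non-degenerate critical points of $\CS+f_{\pi_i}$ by admissibility, but the relevant spectral gaps may tend to zero as $i\to\infty$, so the decay estimate near the $\alpha_i$-end is not uniform in $i$. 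Without a uniform estimate at the incoming end you cannot conclude that the $-\infty$ limit of the bubble-tree trajectory is a subsequential limit of the $\alpha_i$: energy can drift off to $-\infty$ and the first joint $\alpha_0^{(1)}$ could a priori be $\Theta$, collapsing your choice of $j^*$. This is exactly the scenario the paper rules out by an explicit peeling argument combined with the observation that a nontrivial $\Theta$-to-$\Theta$ piece costs definite energy. Your step ``Hence each $\alpha_i$ lies at distance at least $\min(\nu,\kappa)$ from $\Theta$ modulo gauge, and so does the limit $\alpha_0^{(1)}$'' silently assumes the limiting procedure keeps track of the $-\infty$ end; that is precisely the assertion that needs proof.

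If you want to rescue the bubble-tree route you would need to supply the missing compactness statement in the degenerate setting, essentially reproducing the paper's peeling argument anyway; it is cleaner to argue as the paper does, extracting the $\Theta$-end first and using only the non-degeneracy of $\Theta$.
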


\begin{proof}
	The connection $A$ can be constructed as a limit of a sequence of connections associated to the connections 
	$\{A_i\}$. The argument is an adaptation of \cite[Theorem 5.4]{Don:YM-Floer}. We divide the proof into several steps:
	
	{\bf Step 1:} {\it The sequence $|\!|F(A_i)|\!|_{L^2(\R\times Y)}$ is bounded.} 
	
	It suffices to show that the topological energies of paths between critical points of $\CS+f_{\pi_i}$ with index less than a 
	fixed integer $N$ are uniformly bounded by a constant $K$ which does not depend on $i$. In the case that we are 
	concerned only with the trivial perturbation, this is standard.
	The case of non-trivial perturbations $\{\pi_i\}$ can be reduced to the case of the trivial perturbation using the following trick.
	
	Each $\SU(2)$-flat connection $\alpha$ has a path-connected open neighborhood such that the index and the topological 
	energy of paths in this neighborhood are uniformly bounded. We cover the space of flat connections by finitely many 
	such open neighborhoods. If $|\pi_i|_{\mathcal P}$ is small enough, then each end point of the path $z_i$ belong to
	one of the chosen open sets. Now we can use additivity of indices and topological energies 
	with respect to concatenation of paths to verify the claim.	
	
	{\bf Step 2:} {\it There is a positive constant $\epsilon$ and for any connection $A_i$ in the above sequence, 
	there is a constant $T_i$
	such that $A_i|_{(T_i,\infty)\times Y}$ is an ASD connection and:
	\begin{equation}\label{eq}
	  \int_{(T_i,\infty)\times Y}|F(A_i)|^2\dvol=\epsilon
	\end{equation}
	} 
	
	Suppose $\epsilon_0$ and $\epsilon_1$ are given as in Lemma \ref{exp-decay}. As in the proof of Lemma \ref{exp-decay},
	we assume that $\epsilon_0$ is smaller than half of the $L^2$-distance of the trivial connection and the space of irreducible
	flat $\SU(2)$-connections of $Y$. Let $\epsilon':=\min(\epsilon_0,\epsilon_1,\frac{\nu}{2})$ and $T_i'$ be 
	the largest real number 
	such that the $L^2$ distance between $A|_{\{T_i'\}\times Y}$ and 
	$\Theta$ is at least $\epsilon'$. Then the connection $A_i|_{(T_i',\infty)\times Y}$ is an ASD connection. 
	We claim that there is a constant $N_0$, independent of $i$, such that:
	\begin{equation}\label{N0}
	  \int_{(T_i',\infty)\times Y}|F(A_i)|^2\dvol\geq \frac{\epsilon'}{N_0}.
	\end{equation}
	Given $i$, if the above inequality does not hold for $N_0\geq 1$, then part (i) of Lemma \ref{exp-decay} 
	implies that there is $a_i$ such that $A_i$ is gauge equivalent to $\alpha+a_i$ and:
	\[
	  |\nabla^la_i|(t,y)\leq C_le^{-\delta\cdot t }|\!|F(A_i)|\!|_{L^2((0,1)\times Y)}.
	\] 
	In particular, we have:
	\[
	  |a_i|(T_i',y)\leq C_0 \frac{\epsilon'}{N_0}
	\] 	
	If $N_0$ is large enough in compare to $C_0$, then the $L^2$ norm of $a_i$ is less than $\frac{\kappa}{2}$,
	which is a contradiction. Thus the inequality holds for an appropriate value of $N_0$ and we define 
	$\epsilon:=\frac{\epsilon'}{N_0}$. There is also a unique value of $T_i$ greater than $T_i'$ such that \eqref{eq} holds.
	
	{\bf Step 3:} {\it There is a connection $A_0\in\mathcal M_{z_0}(\alpha_0,\Theta)$ satisfying \eqref{energy-bd}.} 
	
	Let $A_i'$ be given by translating $A_i$ in the $\R$ direction by the parameter $T_i$. Then we have:
	\begin{equation}\label{eq-translate}
	  \int_{(0,\infty)\times Y}|F(A_i')|^2\dvol=\epsilon
	\end{equation}
	Using Lemma \ref{exp-decay}, the connection $A_i'$ is gauge equivalent to a connection of the form $\alpha+a_i'$
	where $a_i'$ satisfies the inequalities in \eqref{exp-decay-ineq-1}. Thus the Arzela-Ascoli theorem implies that
	there is a subsequence\footnote{Here and in what follows, we always denote a subsequence with the same notation
	as the original sequence.} of the connections which is $C^\infty$-convergent on $(0,\infty)\times Y$. 
	We can also employ the Uhlenbeck compactness theorem 
	to show that there is an ASD connection $A_0'$ and a finite subset $S$ of $\R\times Y$
	such that the sequence $A_i'$, after passing to a subsequence and changing gauge, is $L^p_1$-convergent\footnote{
	The weaker $L^p_1$-convergence instead of $C^{\infty}$-convergence is due to non-locality of holonomy perturbations.
	For a discussion related to adapting the Uhlenbeck compactness theorem to the ASD equation perturbed by 
	holonomy perturbations, see \cite{K:higher}.} 
	on compact subsets of 
	$\R\times Y\backslash S$ to an ASD connection $A_0'$.
	Moreover, we have:
	\[
	  \mathcal E(A_0')\leq \limsup_{i}\mathcal E(A_i)
	\]
	
	These two observations show that $A_i'$ is strongly convergent (resp. weakly convergent) on $(0,\infty)\times Y$ 
	(resp. $\R\times Y$) to $A_0'$. In particular, $A_0'$ is asymptotic to $\Theta$ on the outgoing end of $\R\times Y$.
	Since $A_0'$ is an ASD connection, it is convergent to a flat connection 
	on the incoming end \cite[Theorem 4.18]{Don:YM-Floer}. If this 
	flat connection is irreducible, then we are done. In the case that $A_0'\in \mathcal M_{z_0'}(\Theta,\Theta)$, 
	the term $|\!|F(A_0')|\!|_{L^2((-\infty,-h)\times Y)}$ is strictly less than $\liminf_i|\!|F(A_i')|\!|_{L^2((-\infty,-h)\times Y)}$
	for any value of $h$. Otherwise, the incoming flat connection of $A_i'$ 
	is convergent to the trivial connection which is 
	a contradiction. The connection $A_0'$ is non-trivial and hence its energy is at least $8\pi^2$, 
	the energy of a single instanton.
	
	Let $h$ be chosen large enough such that $S$ is disjoint from $(-\infty,-h)\times Y$ and the distance between 
	$A_i|_{\{t\}\times Y}$ and $\Theta$ is less than $\epsilon_0$ for $t\in (-h-1,-h)$. Define:
	\[
	  \eta =\min(\epsilon_1,
	  \frac{\liminf_i|\!|F(A_i')|\!|_{L^2((-\infty,-h)\times Y)}-|\!|F(A_0')|\!|_{L^2((-\infty,-h)\times Y)}}{2})
	\]
	where $\epsilon_1$ is given by Lemma \ref{exp-decay}. For large values of $i$, we may pick a constant $S_i$ such that:
	\[
	  \int_{(S_i,-h)\times Y} |F(A_i')|^2=\eta+\int_{(-\infty,-h)\times Y} |F(A_0')|^2
	\]
	The constants $S_i$ are convergent to $-\infty$ and we define a new sequence of connections $A_i''$ by translating $A_i'$
	in the $\R$ direction by the parameter $S_i$. Another application of Uhlenbeck compactness implies that $A_i''$,
	after passing to a subsequence and changing gauge, is convergent to an ASD equation which satisfies:
	\[
	  \mathcal E(A_0')+\mathcal E(A_0'')\leq \limsup_{i}\mathcal E(A_i)
	\]
	Moreover, part (ii) of Lemma \ref{exp-decay} implies that $A_0''$ is asymptotic to the trivial connection on the outgoing end.
	If $A_0''$ is asymptotic to an irreducible flat connection on the incoming end, then we are done. Otherwise, we repeat the  
	above process. This process terminates because the energy of a non-trivial ASD connection asymptotic to 
	the trivial connection on both ends is at least $8\pi^2$.
\end{proof}

\begin{proof}[Proof of Proposition \ref{lower-bound-tau}]
	Let $\{\pi_i\}$ be a sequence of admissible perturbations such that $|\pi_i|_{\mathcal P}\to 0$.
	Since the trivial connection is a non-degenerate critical point of the (non-perturbed) Chern-Simons functional of
	$Y$, we may also assume that $f_{\pi_i}$ is trivial for connections whose $L^2$-distance to the trivial connection is less than a 
	fixed constant $\kappa$.
	If $\alpha_i\in \mathcal L_1^{\pi_i}(Y)$, then the difference $\mdeg(D_1(\alpha_i))-\mdeg(\alpha_i)$ 
	is equal to the topological energy of an element $A_i$ of 
	$\mathcal M_{z_i}^{\pi_i}(\alpha_i',\Theta)$ for an irreducible flat connection $\alpha_i'$. 
	Therefore, we can pick the connections $A_i$ such that $\mathcal E(A_i)\to \Gamma_Y(1)$. Applying
	Lemma \ref{tau-pos} to this sequence of connections implies that there is an irreducible flat connection $\alpha_0$
	and an ASD connection $A_0\in\mathcal M_{z_0}(\alpha_0,\Theta)$ such that $\Gamma_Y(1)\geq \mathcal E(A_0)$
	which verifies our claim.
\end{proof}

\begin{definition}\label{tau'}
	Suppose $Y$ is an $\SU(2)$-non-degenerate integral homology sphere.
	We define:
	\begin{equation}\label{z-tau'}
		\tau'(Y):=\inf_{z'}\{\mathcal E(z')\}
	\end{equation}
	where the infimum is taken over all paths $z'$ such that $z'$ is obtained by concatenation of paths 
	$z_1$, $\dots$, $z_k$ satisfying the following properties. There are flat connections 
	$\alpha_0$, $\dots$, $\alpha_k$ such that $\alpha_0$ and $\alpha_k$ are irreducible, $z_i$ is a path from $\alpha_{i-1}$ to 
	$\alpha_i$, the sum of the indices of paths $z_i$ is equal to $4-8n$ for a non-negative integer $n$ 
	and there is an ASD connection 
	$A_i\in\mathcal M_{z_i}(\alpha_{i-1},\alpha_i)$.
\end{definition}

Given a path $z'$ from $\alpha_-$ to $\alpha_+$ as in \eqref{z-tau'}, $\mathcal E(z')$ is equal to $\CS(\alpha_0)-\CS(\alpha_k)$ mod $\Z$. Therefore, $\tau'(Y)$ takes values in a discrete set. This implies that $\tau'(Y)$ can be realized by a tuple $(A_1,\dots,A_k)$ of ASD connections as in the above definition. We also have:
\begin{equation*} \label{tau'-gr}
 	\tau'(Y)\geq \min\{r\mid r\in \R^{>0},\,r\equiv\CS(\alpha_-)-\CS(\alpha_+)\,\,\text{ mod $\Z$\,\, for irreducible flat connections }\alpha_{\pm}\}
\end{equation*}

\begin{lemma}\label{tau'-pos}
	Suppose $Y$ is an $\SU(2)$-non-degenerate integral homology sphere.
	Suppose $\{\pi_i\}_i$ is a sequence of perturbations of the Chern-Simons functional of $Y$ such that 
	$|\pi_i|_{\mathcal P}\to 0$, the critical points of $\CS+f_{\pi_i}$
	are the same as the critical points of the unperturbed Chern-Simons functional and $f_{\pi_i}=0$ 
	in a neighborhood\footnote{This neighborhood is defined using the $L^2$-distance.} of flat connections. 
	Suppose $A_i\in \mathcal M_{z_i}^{\pi_i}(\alpha_{i,-},\alpha_{i,+})$ 
	where $\alpha_{i,-}$ and $\alpha_{i,+}$ are irreducible flat connections and 
	$z_i$ is a path from $\alpha_{i,-}$ to $\alpha_{i,+}$ with index $4$. 
	Then there are distinct flat connections $\alpha_{0}$, $\dots$, $\alpha_{k}$, 
	a path $z_{0,i}$ from $\alpha_{i-1}$ to $\alpha_{i}$ , and 
	a (non-perturbed) ASD connection $A_i\in\mathcal M_{z_{0,i}}(\alpha_{i-1},\alpha_i)$ such that $\alpha_0$ and $\alpha_k$
	are irreducible, the sum of the indices 
	of $z_{0,1}$, $\dots$, $z_{0,k}$ is equal to $4-8n$ for a non-negative integer $n$ and:
	\begin{equation}\label{energy-bound-tau'}
	  \mathcal E(z_{0,1})+\dots +\mathcal E(z_{0,k})\leq \limsup_{i} \mathcal E(z_i).
	\end{equation}
\end{lemma}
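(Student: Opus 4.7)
The plan is to closely follow the three-step strategy used in the proof of Lemma \ref{tau-pos}, adapting it for a two-sided cylinder between two irreducible flat connections. Since the set of $\SU(2)$-flat connections on $Y$ is finite and $\CS+f_{\pi_i}$ has the same critical points as $\CS$, I would first pass to a subsequence so that $\alpha_{i,-}=\alpha_-$ and $\alpha_{i,+}=\alpha_+$ are independent of $i$. The $\SU(2)$-non-degeneracy hypothesis is crucial here: it guarantees that part (ii) of Lemma \ref{exp-decay} applies near \emph{every} flat connection, making the argument structurally identical to that of Lemma \ref{tau-pos} (where one end was pinned at $\Theta$).

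Next I would bound $\mathcal{E}(z_i)$ uniformly. Using the same neighborhood-covering trick as in Step 1 of the proof of Lemma \ref{tau-pos}, each flat connection has a small open neighborhood in which the indices and energies of paths are uniformly bounded; combined with additivity of indices and energies under concatenation and with the hypothesis $\ind(z_i)=4$, this yields $\mathcal{E}(z_i)\leq K$ for some constant $K$. I would then apply Uhlenbeck compactness with bubbling: after passing to a further subsequence and choosing gauge, $A_i$ converges in $L^p_1$ on compact subsets of $(\R\times Y)\setminus S$, for a finite bubble set $S$, to an unperturbed ASD connection $B_1$. Part (ii) of Lemma \ref{exp-decay} then gives asymptotic flat connections $\beta_-^{(1)},\beta_+^{(1)}$ for $B_1$.

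I then iterate as in Step 3 of the proof of Lemma \ref{tau-pos}. If $\beta_-^{(1)}\neq\alpha_-$, a positive amount of energy escapes to $-\infty$ relative to the window on which $B_1$ is captured; I would choose translation parameters $S_i\to-\infty$ so that the translated connections capture a definite fraction $\eta>0$ of the escaping energy in a bounded window, apply Uhlenbeck compactness again, and extract an unperturbed ASD limit $B_2$ asymptotic to flat connections $\beta_-^{(2)},\beta_+^{(2)}$ with $\beta_+^{(2)}=\beta_-^{(1)}$. A symmetric procedure handles the right end. The process terminates because each new piece contributes energy at least $\tau'(Y)>0$ or at least $1$ from a bubble, while the total energy is bounded by $K$. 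After collapsing any consecutive repetitions of the same asymptotic flat connection, concatenation produces the desired sequence of distinct flat connections $\alpha_0=\alpha_-,\dots,\alpha_k=\alpha_+$ together with unperturbed ASD connections $A_j\in\mathcal{M}_{z_{0,j}}(\alpha_{j-1},\alpha_j)$. Additivity of indices under gluing gives $\sum_j\ind(z_{0,j})=4-8n$, where $n$ is the number of bubbles (each absorbing $8$ units of index for an $\SU(2)$-bundle), and the energy inequality \eqref{energy-bound-tau'} follows since each bubble contributes at least $1$ to $\limsup_i\mathcal{E}(z_i)$.

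The main obstacle is the neck analysis: choosing the translation parameters at each stage so that the rescaled sequence captures a nonvanishing piece of the bubbling profile and no energy is ultimately lost when the limits are concatenated, while verifying that the intermediate flat connections produced are well-defined and (after collapsing repetitions) distinct. The $\SU(2)$-non-degeneracy reduces this to an iterated application of Lemma \ref{exp-decay}, whose uniform estimates near every flat connection eliminate the analytic difficulties that would otherwise arise at intermediate reducible asymptotic limits.
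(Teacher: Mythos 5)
Your proposal reproduces the paper's approach, which simply points to the argument of Lemma \ref{tau-pos}: pass to a subsequence on which $\alpha_{i,\pm}$ (and, since the index and endpoints pin down the path up to homotopy, $z_i$ itself) are fixed, then extract an unperturbed broken trajectory via Uhlenbeck compactness, exponential decay (Lemma \ref{exp-decay}, which $\SU(2)$-non-degeneracy makes applicable near every flat connection), and iterated recentering, counting bubbles in the index bookkeeping. Two small points. First, your energy bound $\mathcal E(z_i)\leq K$ is automatic once $\alpha_\pm$ and the index are fixed, because the index formula determines $\mathcal E(z_i)$ up to the choice of the homotopy class of the path; the neighborhood-covering trick from Step~1 of Lemma \ref{tau-pos} was needed there only because the endpoints of the perturbed paths moved with $\pi_i$, which your hypotheses rule out. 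Second, your termination argument invokes $\tau'(Y)>0$ as a per-piece lower bound, but the individual pieces of the broken trajectory need not satisfy the constraints of Definition \ref{tau'} (intermediate asymptotics may be $\Theta$, and the indices of the pieces are unconstrained), so $\tau'(Y)$ does not a priori bound their energy. The correct reason the process terminates, as in the proof of Lemma \ref{tau-pos}, is that each non-trivial ASD piece on $\R\times Y$ has positive energy lying in the discrete set $\{\CS(\alpha)-\CS(\beta)\bmod\Z\}$ (finite, since there are finitely many flat connections), giving a uniform positive lower bound, and each bubble costs at least one unit of energy.
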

\begin{proof}
	The proof is similar to that of Proposition \ref{tau-pos}. Since $Y$ is $\SU(2)$-non-degenerate, there are only finitely many 
	$\SU(2)$-flat connections. Using this observation and the fact that the index of connections $A_i$ are at most $4$,
	we may assume that $A_i\in \mathcal M_{z}^{\pi_i}(\alpha_-,\alpha_+)$
	for a fixed path $z$ and irreducible flat connections $\alpha_-$, $\alpha_+$. Following the argument of 
	Proposition \ref{tau-pos}, we can construct nontrivial ASD connections $A_i\in \mathcal M_{z_{0,i}}(\alpha_{i-1},\alpha_i)$
	where $\alpha_i$ is a flat connection, $\alpha_0=\alpha_-$, $\alpha_k=\alpha_+$, $z_{0,i}$ is a path from 
	$\alpha_{i-1}$ to $\alpha_i$ and the inequality \eqref{energy-bound-tau'} is satisfied.	
\end{proof}

\begin{prop}\label{tau'-str}
	If $Y$ is an $\SU(2)$-non-degenerate integral homology sphere, 
	then for any positive integer $i$ we have $\Gamma_Y(i+1)\geq \Gamma_Y(i)+\tau'(Y)$.
\end{prop}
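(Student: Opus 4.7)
The plan is to follow the template of Proposition~\ref{lower-bound-tau} but with an additional $U$-step that reduces from level $i+1$ to level $i$. Given a homogeneous element $\alpha\in\mathcal{L}_{i+1}^\pi$, the first step will be to verify that $U\alpha\in\mathcal{L}_i^\pi$. By Lemma~\ref{proj-homog}(ii), $U\alpha$ is homogeneous of weight $(0,4i-3)$. Because $i\geq 1$, the defining condition on $\alpha$ forces $D_1(\alpha)=D_1U^0(\alpha)=0$, so the identity $dU-Ud=D_2D_1$ coming from $\widetilde d^2=0$, combined with $d\alpha=0$, yields $d(U\alpha)=0$. Finally $\min\{j:D_1U^j(U\alpha)\neq 0\}=i-1$, and $U\alpha\neq 0$, since otherwise $D_1U^i(\alpha)=0$ would contradict $\alpha\in\mathcal{L}_{i+1}^\pi$.

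Given this, I decompose
\[
\mdeg(D_1U^i(\alpha))-\mdeg(\alpha)=\bigl[\mdeg(D_1U^{i-1}(U\alpha))-\mdeg(U\alpha)\bigr]+\bigl[\mdeg(U\alpha)-\mdeg(\alpha)\bigr].
\]
Since $U\alpha\in\mathcal{L}_i^\pi$, the first bracket is bounded below by $\inf_{\gamma\in\mathcal{L}_i^\pi}\bigl(\mdeg(D_1U^{i-1}(\gamma))-\mdeg(\gamma)\bigr)$, which converges to $\Gamma_Y(i)$ as $|\pi|_{\mathcal P}\to 0$. The main analytic content lies in showing the second bracket is at least $\tau'(Y)$ in the limit.

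For the second bracket, write $\alpha=\sum_j s_j\lambda^{r_j}\alpha_j$ summed over critical points of Floer grading $4i+1$. For each pair $(\alpha_j,\beta)$ with $\beta$ of grading $4i-3$, there is at most one homotopy class of paths from $\alpha_j$ to $\beta$ of index $4$, and concatenation with the standard path from $\Theta$ to $\alpha_j$ gives the standard path from $\Theta$ to $\beta$; hence $\mathcal{E}(z_{j,\beta})=r_\beta-r_j$ whenever the index-$4$ path exists. Thus $U\alpha=\sum_\beta\bigl(\sum_j s_j c_{j,\beta}\bigr)\lambda^{r_\beta}\beta$ with $c_{j,\beta}=-\tfrac12\#\mathcal{N}_{z_{j,\beta}}^\pi(\alpha_j,\beta)$. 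Selecting $\beta^*$ achieving $\mdeg(U\alpha)$, nonvanishing of its coefficient provides some $j_0$ with $s_{j_0}\neq 0$ and $\mathcal{N}_{z_{j_0,\beta^*}}^\pi(\alpha_{j_0},\beta^*)\neq\emptyset$, so
\[
\mdeg(U\alpha)-\mdeg(\alpha)\geq r_{\beta^*}-r_{j_0}=\mathcal{E}(z_{j_0,\beta^*}).
\]
Any element of this moduli space is a perturbed ASD connection of index $4$ between two irreducible flat connections. Feeding such a sequence, arising from a minimizing sequence of perturbations $\pi_n$ with $|\pi_n|_{\mathcal P}\to 0$, into Lemma~\ref{tau'-pos} produces broken trajectories of the type described in Definition~\ref{tau'}, whose total energy is at least $\tau'(Y)$. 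A standard contradiction argument then yields $\mathcal{E}(z_{j_0,\beta^*})\geq\tau'(Y)-\delta$ once $|\pi_n|_{\mathcal P}$ is small enough, and combining the two bracket estimates gives $\Gamma_Y(i+1)\geq\Gamma_Y(i)+\tau'(Y)$.

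The main technical obstacle will be arranging the perturbations so that Lemma~\ref{tau'-pos} genuinely applies: its hypothesis requires the critical points of $\CS+f_{\pi_n}$ to coincide with the flat connections and $f_{\pi_n}$ to vanish in a neighborhood of each flat connection. Since $Y$ is $\SU(2)$-non-degenerate, Proposition~\ref{regular-pert}(iv) can be invoked at every flat connection simultaneously, and a limit argument shows that restricting to perturbations of this special type does not change the value of $\Gamma_Y(i+1)$ computed by \eqref{Gamma1}.
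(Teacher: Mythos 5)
Your proof is correct and takes essentially the same route as the paper: pass to perturbations whose critical set coincides with the flat connections (possible thanks to $\SU(2)$-non-degeneracy and Proposition~\ref{regular-pert}(iv)), show $U\alpha\in\mathcal L_i^\pi$, split off the $U$-step from $\mdeg(D_1U^i\alpha)-\mdeg(\alpha)$, bound that step by the energy of an index-$4$ perturbed ASD connection between irreducible flat connections, and invoke Lemma~\ref{tau'-pos} in the limit. The only thing you add is an explicit verification that $U\alpha\in\mathcal L_i^\pi$ (via $dU-Ud=D_2D_1$ together with $d\alpha=0$ and $D_1\alpha=0$), which the paper states without proof.
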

\begin{proof}
	We fix a sequence of admissible perturbations $\pi_i$ such that $|\pi_i|_{\mathcal P}\to 0$,
	the critical points of $\CS+f_{\pi_i}$ are the same as the critical points of the unperturbed 
	Chern-Simons functional and $f_{\pi_i}=0$ in a neighborhood of flat connections. 
	For each admissible perturbation $\pi_i$ and any $\alpha\in \mathcal L_{k+1}^{\pi_i}(Y)$, we have 
	$U(\alpha)\in \mathcal L_{k}^{\pi_i}(Y)$. Furthermore, there are flat connections $\alpha_{i,-}$,
	$\alpha_{i,+}$ with Floer gradings $4k-3$, $4(k-1)-3$ and $A_i \in \mathcal M^{\pi_i}_{z_i}(\alpha_{i,-},\alpha_{i,+})$
	with index $4$ such that:
	\[
	  \mdeg(U(\alpha))-\mdeg(\alpha)\geq \mathcal E(A_i).
	\]
	By taking infimum over all $\alpha\in \mathcal L_{k+1}^{\pi_i}(Y)$, we conclude that :
	\[
	  \inf_{\alpha\in \mathcal L_{k+1}^{\pi_i}(Y)} (\mdeg(D_1U^{k-1}(\alpha)-\mdeg(\alpha)\geq 
	  \inf_{\alpha'\in \mathcal L_{k}^{\pi_i}(Y)} (\mdeg(D_1U^{k-2}(\alpha')-\mdeg(\alpha')+\inf_{A_i}\mathcal E(A_i).
	\]	
	where the infimum is taken over all index $4$ connections $A_i \in \mathcal M^{\pi_i}_{z_i}(\alpha_{i,-},\alpha_{i,+})$
	where $\alpha_{i,-}$, $\alpha_{i,+}$ are irreducible flat connections. By taking the limit of the above inequality as 
	$i \to \infty$ and using Lemma \ref{tau'-pos}, we can conclude that $\Gamma_Y(i+1)\geq \Gamma_Y(i)+\tau'(Y)$.
\end{proof}

Next, we  use the argument in the proof of Theorem \ref{hom-inv-Gamma} to obtain a more general result. The constant $\eta$ in the following theorem is introduced in Definition \ref{eta}:

\begin{theorem} \label{inc-energy}
	Suppose $W:Y \to Y'$ is a cobordism of integral homology spheres with { $b_1(W)=b^+(W)=0$}. 
	Then $\Gamma_{Y'}(k) \leq \Gamma_{Y}(k)-\eta(W)$ for any positive integer $k$. For a non-positive
	$k$, we have the weaker inequality $\Gamma_{Y'}(k) \leq \max(\Gamma_{Y}(k)-\eta(W),0)$.
\end{theorem}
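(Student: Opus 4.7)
The argument extends that of Proposition \ref{hom-inv-Gamma}, but one must sharpen the $\mdeg$ bookkeeping so as to extract the actual energy gain $\eta(W)$ instead of only a vanishing lower bound. The central analytic ingredient is a strengthening of Lemma \ref{mdeg-func}: for every $\delta>0$ there exists $\epsilon>0$ such that, for $\epsilon$-admissible perturbations $\pi$ on $Y$, $\pi'$ on $Y'$, and an $\epsilon$-admissible secondary perturbation on $W^+$, every non-empty moduli space $\mathcal M^{\overline\pi}_z(W;\alpha,\alpha')$ of index less than $8$ with $(\alpha,\alpha')$ not the pair of trivial connections on the two ends satisfies $\mathcal E(z)\ge \eta(W)-\delta$. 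The proof is by contradiction and compactness: a sequence of offending perturbed instantons would, via Uhlenbeck compactness together with the exponential decay of Lemma \ref{exp-decay}, yield a limiting (unperturbed) ASD connection on $W^+$ asymptotic to flat connections on the ends, with energy strictly below $\eta(W)$, contradicting the definition. The bubbling and end-concentration analysis is parallel to Step 3 of the proof of Lemma \ref{tau-pos}, adapted to the non-cylindrical setting of $W^+$.

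Given this lemma, each of the cobordism-induced operators $\varphi,\mu,\Delta_1,\Delta_2$ increases $\mdeg$ by at least $\eta(W)-\delta$. Inspecting the formulas preceding Proposition \ref{func-chain-map}, one sees that $\brC_W$ decomposes as $c(W)\cdot\mathrm{id}+R$, where $R$ raises $\mdeg$ by at least $\eta(W)-\delta$, because every summand of $R$ factors through at least one of $\varphi,\mu,\Delta_1,\Delta_2$. Similarly, for $w_0=(\alpha,0)\in\crC_*^{\pi}(Y)$, every coordinate of $\crC_W(w_0)$ has $\mdeg$ at least $\mdeg(w_0)+\eta(W)-\delta$, since each nonzero summand factors through $\varphi$ or $\Delta_1$.

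Now fix a special pair $(z_0,w_0)$ of degree $-k$ for $Y$, and set $z':=\brC_W(z_0)$, $w':=\crC_W(w_0)$. Proposition \ref{func-chain-map} gives $\mathrm{Deg}(z')=-k$ and $i(z')=\widecheck d(w'+L(z_0))$, so $(z',w'+L(z_0))$ is an admissible pair for $Y'$ of degree $-k$. From the decomposition of $\brC_W$ one gets $\mdeg(z')\le \mdeg(z_0)$, because the $c(W)z_0$ summand already realises that value, and from the analysis of $\crC_W(w_0)$ one gets $\mdeg(w')\ge \mdeg(w_0)+\eta(W)-\delta$. For positive $k$, formula \eqref{L} shows $L(z_0)=0$, since $z_0$ has no non-negative powers of $x$; combining the two bounds,
\[
\mdeg(z')-\mdeg(w'+L(z_0))\le \mdeg(z_0)-\mdeg(w_0)-\eta(W)+O(\delta).
\]
Taking the infimum over special pairs and sending $\delta\to 0$ through a sequence of ever-smaller admissible perturbations produces $\Gamma_{Y'}(k)\le \Gamma_Y(k)-\eta(W)$.

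For non-positive $k$ one has $L(z_0)\neq 0$ in general, but formula \eqref{L} shows that every summand of $L(z_0)$ factors through $\Delta_2$ or $\mu$, so $\mdeg(L(z_0))\ge \mdeg(z_0)+\eta(W)-\delta$. Consequently $\mdeg(w'+L(z_0))\ge \min(\mdeg(w_0),\mdeg(z_0))+\eta(W)-\delta$, which yields
\[
\mdeg(z')-\mdeg(w'+L(z_0))\le \max(\mdeg(z_0)-\mdeg(w_0),\,0)-\eta(W)+O(\delta),
\]
and the bound $\Gamma_{Y'}(k)\le \max(\Gamma_Y(k)-\eta(W),0)$ follows after the outer $\max(\cdot,0)$ in the definition \eqref{Gamma0}. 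The principal obstacle is the opening compactness lemma: one must simultaneously exclude interior bubbling, energy draining onto either cylindrical end, and degeneration to boundary-trivial limits, each of which could otherwise yield an ASD connection on $W^+$ of energy below $\eta(W)$ and invalidate the estimate.
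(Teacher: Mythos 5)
Your overall strategy matches the paper's: push a special pair $(z_0,w_0)$ through the cobordism maps, then quantify how much $\mdeg$ grows using compactness of ASD moduli spaces. But the central ``strengthened compactness lemma'' you invoke is false as stated, and this creates a genuine gap for the non-positive $k$ case.

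\textbf{The over-claim.} You assert that every non-empty moduli space $\mathcal M^{\overline\pi}_z(W;\alpha,\alpha')$ of index $<8$ with $(\alpha,\alpha')$ not the pair of trivial connections satisfies $\mathcal E(z)\ge \eta(W)-\delta$. Look at Definition \ref{eta}: $\eta(W)$ is the infimum of energies of (possibly broken) ASD connections whose $Y$-end \emph{and} $Y'$-end are \emph{both irreducible}. A moduli space $\mathcal M_z(W;\alpha,\Theta')$ with $\alpha$ irreducible, used to build $\Delta_1$, or $\mathcal M_z(W;\Theta,\alpha')$ with $\alpha'$ irreducible, used to build $\Delta_2$, has one trivial end; such a connection, or a limit of such, contributes only one \emph{piece} of the broken trajectories in cases (ii)--(iv) of Definition \ref{eta}, not a full trajectory. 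Its energy can therefore be strictly below $\eta(W)$, since the remaining pieces ($\R\times Y$ or $\R\times Y'$ trajectories) carry positive energy that $\eta(W)$ also counts. Your proposed compactness proof fails exactly here: a sequence with one trivial end limits to a connection with one trivial end, which does not contradict the definition of $\eta(W)$. The analogue that the paper actually uses (Lemma \ref{limit-small-energy}) explicitly assumes \emph{both} asymptotic limits are irreducible.

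\textbf{Where this matters.} For positive $k$ your argument survives despite the faulty lemma: for a special pair of degree $-k<0$ one can check from $i(z')=\widecheck d(w')$ and ${\rm Deg}(z')=-k<0$ that $\varphi(\alpha)\neq 0$, and the $\crC$-grading convention reads $\mdeg$ off the first coordinate when it is nonzero, so $\mdeg(\crC_W(w_0))=\mdeg(\varphi(\alpha))\ge \mdeg(w_0)+\eta(W)-\delta$ without ever needing the $\Delta_1$ bound. But for non-positive $k$ you use the $\Delta_2$ bound inside $L(z_0)$ and the $\Delta_1$ bound inside the second coordinate of $\crC_W(w_0)$, concluding $\mdeg(w'+L(z_0))\ge \min(\mdeg(w_0),\mdeg(z_0))+\eta(W)-\delta$ and $\mdeg(z')-\mdeg(w'+L(z_0))\le \max(\mdeg(z_0)-\mdeg(w_0),0)-\eta(W)+O(\delta)$. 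That is actually a \emph{stronger} inequality than the theorem asserts for non-positive $k$ (it would give $\Gamma_{Y'}(k)\le \Gamma_Y(k)-\eta(W)$ there too), and it rests entirely on the unjustified lemma. Using only the correct bounds --- $\mdeg(\varphi(\alpha))\ge \mdeg(w_0)+\eta(W)-\delta$ and the ordinary $\mdeg(L_1(z_0))\ge \mdeg(z_0)-\delta$ from Lemma \ref{mdeg-func} --- one gets $\mdeg(w'+L(z_0))\ge \min(\mdeg(w_0)+\eta(W),\mdeg(z_0))-\delta$, which yields exactly $\max(\Gamma_Y(k)-\eta(W),0)$ and no more.

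\textbf{A simplification you missed.} The paper replaces $\crC_W(w_0)+L(z_0)$ by the cleaner $w'=(\varphi(\alpha)+L_1(z_0),0)$: since $\widecheck d$ depends only on the $C_*^{\pi'}(Y')$ component, this still satisfies $\widecheck d(w')=i(z')$, and its second coordinate is identically zero, so the $\Delta_1$ terms never enter the $\mdeg$ computation at all. Adopting this choice would have made it clear that only the $\varphi$ (both-ends-irreducible) estimate is needed, and you would never have been tempted to extend the lemma to the one-trivial-end moduli spaces.
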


\begin{definition}\label{eta}
	Let $W$ be a cobordism from an integral homology sphere $Y$ to another integral homology sphere
	$Y'$. Let:
	\begin{equation}
		\eta(W):=\inf_{A}\{\mathcal E(A)\}
	\end{equation}
	where the infimum is taken over all ASD connections $A$ on $W$ which is asymptotic to irreducible flat 
	connections on $Y$ and $Y'$. Here we allow $A$ to be a broken ASD connection. 
	That is to say, $A$ might have one of the following forms for irreducible flat connections $\alpha$ and $\alpha'$:
	\begin{itemize}
		\item[(i)]  $A\in \mathcal M_{z}(W,\alpha,\alpha')$;
		\item[(ii)] $A=(A_0,A_1)$ where $A_0\in \mathcal M_{z_0}(\R\times Y,\alpha,\Theta)$ and 
			$A_1\in \mathcal M_{z_1}(W,\Theta,\alpha')$;
		\item[(iii)] $A=(A_0,A_1)$ where $A_0\in \mathcal M_{z_0}(W,\alpha,\Theta)$ and 
			$A_1\in \mathcal M_{z_1}(\R\times Y',\Theta,\alpha')$;	
		\item[(iv)] $A=(A_0,A_1,A_2)$ where $A_0\in \mathcal M_{z_0}(\R\times Y,\alpha,\Theta)$,
				$A_1\in \mathcal M_{z_1}(W,\Theta,\Theta)$ and 
				$A_2\in \mathcal M_{z_2}(\R\times Y',\Theta,\alpha')$.		
	\end{itemize}
\end{definition}	
The constant $\eta(W)$ is a non-negative number. The set of possible values for the energy of ASD connections on $W$ is a discrete subset of non-negative integers. Therefore, if $\eta(W)$ is finite, then it can be realized by the energy of a (possibly broken) ASD connection on $W$ which is asymptotic to non-trivial connections on both ends. We have:
\begin{align*} \label{eta-gr}
 	\eta(W)\geq \min\{r\mid r\in \R^{\geq0},\,r\equiv\CS(\alpha)-\CS(\alpha')\,\,&\text{ mod $\Z$\,\,}\\ \text{for irreducible flat connections }
	&\alpha,\, \alpha' 
	\text{ on } Y,\,Y'\}
\end{align*}
We also make the observation that if $\eta(W)=0$, then there is a flat connection extending non-trivial flat connections on $Y$ and $Y'$. Consequently, if $W$ is simply connected, then $\eta(W)>0$.

\begin{lemma} \label{limit-small-energy}
	Let $\{\pi_i\}$, $\{\pi_i'\}$ be sequences of $\epsilon_i$-admissible perturbations for the 
	Chern-Simons functionals of $Y$, $Y'$ such that $f_{\pi_i}$ and $f_{\pi_i'}$ vanish in fixed neighborhoods of the 
	trivial connections on $Y$ and $Y'$. Let $\pi_i$ and $\pi_i'$ be extended to an $\epsilon_i$-admissible
	perturbation $\overline \pi_i$ on the cobordism $W:Y\to Y'$, which satisfies { $b_1(W)=b^+(W)=0$}. 
	Let $A_i\in \mathcal M_{z_i}^{\overline \pi_i}(W,\alpha_i,\alpha_i')$ be chosen such that 
	$\alpha_i$, $\alpha_i'$ are respectively 
	irreducible critical points of $\CS+f_{\pi_i}$, $\CS+f_{\pi_i'}$, and $z_i$ is a path along $W$ with index $0$.
	If $\epsilon_i\to 0$, then there is a (possibly broken) ASD connection
	$A_0$ on $W$ which is asymptotic to irreducible flat connections $\alpha_0$, $\alpha_0'$ on $Y$, $Y'$ and: 
	\begin{equation}\label{energy}
	  \mathcal E(A_0)\leq \limsup_{i}\mathcal E(A_i).
	\end{equation}
\end{lemma}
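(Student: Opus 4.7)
The plan is to mimic the strategy of Lemma \ref{tau-pos}, adapted to the cobordism setting. First I will extract a primary Uhlenbeck limit on $W^+$, and then, if that limit is asymptotic to the trivial connection on either cylindrical end, I will perform translations to recover a non-trivial ASD piece on $\R\times Y$ or $\R\times Y'$. The four possible outcomes will correspond exactly to cases (i)--(iv) of Definition \ref{eta}.

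Without loss of generality, after passing to a subsequence, I may assume $\mathcal E(A_i)\to E_\infty:=\limsup_i \mathcal E(A_i)$, and a uniform bound on the energy follows as in Step 1 of Lemma \ref{tau-pos}, using that the indices $\mathrm{ind}(z_i)=0$ together with the convergence of $\CS+f_{\pi_i}$ values on critical points to the finite set of Chern--Simons values on genuine flat connections (\cite[Lemma 3.8]{KM:yaft}). Since $\epsilon_i\to 0$, the perturbation $\overline\pi_i$ becomes arbitrarily small in $C^0$ on any fixed compact subset of $W^+$. Applying the Uhlenbeck compactness theorem in its form adapted to holonomy perturbations (see \cite{K:higher} and the discussion in Step 3 of Lemma \ref{tau-pos}), I obtain, after passing to a subsequence and applying gauge transformations, a finite set $S\subset W^+$ and an unperturbed ASD connection $\tilde A$ on $W^+\setminus S$ that extends across $S$ to an ASD connection on $W^+$ (by removable singularities), such that $A_i\to \tilde A$ in $L^p_1$ on compact subsets of $W^+\setminus S$. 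Energy lost to bubbles is a non-negative multiple of $8\pi^2$, so $\mathcal E(\tilde A)\le E_\infty$. Standard results (e.g.\ \cite[Theorem 4.18]{Don:YM-Floer}) then ensure $\tilde A$ is asymptotic to flat connections $\beta_0$ on $Y$ and $\beta_0'$ on $Y'$.

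If both $\beta_0$ and $\beta_0'$ are irreducible, then $A_0:=\tilde A$ satisfies the conclusion and we are in case (i). Otherwise, at least one asymptotic limit, say $\beta_0$, equals $\Theta$. In this situation I mimic Step 3 of Lemma \ref{tau-pos} on the incoming cylindrical end $(-\infty,0]\times Y$. Since the trivial connection is non-degenerate and $f_{\pi_i}$ vanishes in a fixed $L^2$-neighborhood of it, the exponential-decay Lemma \ref{exp-decay} applies to $A_i$ near the incoming end. As in Lemma \ref{tau-pos}, choose a threshold $\epsilon>0$ (depending only on the non-degeneracy constants of $\Theta$) and select a translation parameter $S_i\to-\infty$ so that the translated connections $A_i^{\mathrm{tr}}:=\tau_{S_i}^*A_i$ carry a fixed amount of $F$-energy, comparable to $\epsilon$, in the region where the energy of the primary limit $\tilde A$ is strictly smaller than that of the $A_i$. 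Another Uhlenbeck extraction yields a non-trivial unperturbed ASD connection $A_0^{(1)}$ on $\R\times Y$ whose outgoing asymptotic is $\Theta$ and whose incoming asymptotic is a flat connection. If that flat connection is irreducible we stop; otherwise it is $\Theta$ again and the extracted piece has topological energy at least $8\pi^2$, so iterating the extraction must terminate in finitely many steps. Concatenating these pieces produces the incoming component of the desired broken ASD connection. The same construction, applied symmetrically on the outgoing end if $\beta_0'=\Theta$, produces the outgoing component. The final broken ASD connection $A_0$ satisfies one of the four schemes (i)--(iv) of Definition \ref{eta}, is asymptotic to irreducible flat connections at the extremal ends, and has total topological energy bounded by $\limsup_i\mathcal E(A_i)$, since each extraction step only loses energy (to bubbles or to the intermediate passages through $\Theta$).

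The main obstacle will be the delicate interplay between bubbling and end-translations in the presence of holonomy perturbations: one must verify that Uhlenbeck compactness with perturbations of bounded holonomy size (as in \cite{K:higher}) is strong enough to pass to unperturbed limits in every neck we form, and that the exponential decay estimate of Lemma \ref{exp-decay} can be invoked at $\Theta$ even though $A_i$ is only perturbed ASD --- this is precisely why the hypothesis that $f_{\pi_i}$ vanishes near $\Theta$ is included. A secondary issue is the possibility that intermediate flat connections produced by the iteration are degenerate; since Lemma \ref{exp-decay} is local and requires non-degeneracy only at the flat connection used as the reference, one only needs the relevant \emph{endpoint} of each translation step to be either $\Theta$ or an irreducible limit, which is guaranteed by the termination criterion.
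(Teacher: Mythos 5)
Your proposal is correct and follows essentially the same route as the paper's proof: obtain a uniform energy bound as in Step 1 of Lemma \ref{tau-pos}, extract a primary Uhlenbeck limit on $W^+$ with the energy inequality, and if either asymptotic is $\Theta$, run the translation-and-re-extraction argument of Step 3 of Lemma \ref{tau-pos} on the relevant cylindrical end to produce the extra cylindrical pieces, terminating because each trivial-to-trivial piece costs at least $8\pi^2$. Your observations about why the hypothesis that $f_{\pi_i}$ vanishes near $\Theta$ is needed, and why degeneracy of an irreducible endpoint is harmless since the iteration stops there, are both accurate and slightly more explicit than the paper, which simply defers to the proof of Lemma \ref{tau-pos}.
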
 
\begin{proof}
	Firstly as in Step $1$ of the proof of Lemma \ref{tau-pos} we can show that the terms $|\!|F(A_i)|\!|_{L^2(W^+)}$ 
	are uniformly 
	bounded. Thus the Uhlenbeck compactness theorem implies that the connections $A_i$, after passing to a subsequence 
	and changing gauge, are weakly convergent to an ideal instanton $(A_0,S)$ on $W^+$, where 
	$S\subset W^+$ is a finite subset and $A_0$ 
	is an ASD connection. The weakly convergence of $A_i$ implies that $A_i$, after possibly changing the gauge,
	is $L^p_1$ convergent on compact subsets of $W^+\backslash S$ to $A_0$, and:
	\begin{equation*}
		\mathcal E(A_0)\leq \limsup_{i}\mathcal E(A_i)
	\end{equation*}
	If the connection $A_0$ is asymptotic to non-trivial flat connections on both ends, then there is nothing left to prove.
	If it is asymptotic to the trivial connection on one of the ends, say the outgoing end,  
	then we can argue as in Step 3 of Lemma \ref{tau-pos} to find an instanton $A_1$ in a moduli space of the form
	$\mathcal M_{z_1}(\R\times Y',\Theta,\alpha')$ where $\alpha'$ is an irreducible flat connection on $Y'$. The pair 
	$(A_0,A_1)$ also satisfy the analogue of the inequality in \eqref{energy}. The other cases can be treated similarly.
\end{proof}

\begin{proof}[Proof of Theorem \ref{inc-energy}]
	We follow a similar argument as in Theorem \ref{hom-inv-Gamma}. Let $w=(\alpha,0)$ and $z$ form a special pair 
	of degree $-k$. Then the pair $w'=(\varphi(\alpha)+L(z),0)$ and $z'=\brC_W(z)$ satisfies the identity 
	$\widecheck d(w')=i(z')$. Here $\varphi:C_*^{\pi}(Y) \to C_*^{\pi'}(Y')$ is the cobordism map associated to $W$.
	Recall that $L(z)=0$ if $k$ is a positive integer.
	The first inequality in \eqref{1st-ineq} can be modified as follows:
	\[  
	  \mdeg(w')\geq \mdeg(w)+\mathcal E(A)
	\]
	for a connection $A\in \mathcal M_z^{\overline \pi}(\alpha,\alpha')$ of index $0$ 
	where $\alpha$, $\alpha'$ are irreducible critical points of
	$\CS+f_{\pi_i}$, $\CS+f_{\pi_i'}$ and $z$ is a path over $W$ from $\alpha$ to $\alpha'$ with index $0$. 
	This the inequality in
	\eqref{ineq-cob} can be improved as follows:
	\begin{align*}
	  \inf_{\substack{z'\in\brC_*^{\pi'}(Y'),\,w'\in\crC_*^{\pi'}(Y'), \\
	  \widecheck d(w')=i(z'), \,{\rm Deg}(z')=-k}}(\mdeg(z')-&\mdeg(w'))
	  \leq\\ 
	  &\max( \inf_{\substack{z\in\brC_*^{\pi}(Y),\,w\in\crC_*^{\pi}(Y), \\\widecheck d(w)=i(z), 
	  \,{\rm Deg}(z)=-k}}(\mdeg(z)-\mdeg(w))-\inf_{A}\mathcal E(A),0)+\delta
	\end{align*}
	where the second infimum on the right hand side is over all elements of the moduli spaces 
	$\mathcal M_z^{\overline \pi}(\alpha,\alpha')$ with $\alpha$, $\alpha'$ being irreducible critical points and $z$ being a path 
	along $W$ of index $0$ from $\alpha$ to $\alpha'$.
	By taking the limit of the above inequalities as the norms of perturbations $\pi$, $\pi'$ and $\overline \pi$ converge to zero
	and using Lemma \ref{limit-small-energy}, we can conclude:
	\[
	  \Gamma_{Y'}(k) \leq \max(\Gamma_{Y}(k)-\eta(W),0).
	\]
	In the case that $k$ is a positive integer, the above argument can be modified to obtain the desired stronger inequality 
	using the fact that $L(z)=0$.
\end{proof}

\begin{example}\label{p-q-pqk+1}
	{ There are negative definite cobordisms $W:S^3\to\Sigma(p,q,pqk+1)$ 
	and $W':\Sigma(p,q,pqk+1)\to S^3$ with 
	$b_1=0$ \cite{CG:app-Thm-A}.} Therefore, Theorem \ref{inc-energy} implies that 
	$\Gamma_{\Sigma(p,q,pqk+1)}=\Gamma_{S^3}$.
\end{example}

\begin{cor}\label{hom-cob-simp}
	Suppose $W:Y\to Y'$ is a homology cobordism and $\Gamma_{Y}$ (and hence $\Gamma_{Y'}$) 
	has a finite positive value in its image. Then the inclusions of $Y$ and $Y'$ in $W$ induce non-trivial maps of fundamental groups.
	In particular, if $h(Y)$ is non-trivial, then there is no simply connected homology cobordism from $Y$ 
	to another integral homology sphere.
\end{cor}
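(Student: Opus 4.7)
The plan is to derive the corollary from two earlier results: the homology-cobordism invariance of $\Gamma_Y$ (Proposition \ref{hom-inv-Gamma}), which gives $\Gamma_Y=\Gamma_{Y'}$, and the energy-refined cobordism inequality of Theorem \ref{inc-energy}, which gives $\Gamma_{Y'}(k)\leq \max(\Gamma_Y(k)-\eta(W),0)$. The strategy is to show that triviality of the induced $\pi_1$-map from $Y$ into $W$ forces $\eta(W)>0$, whereupon these two relations become incompatible with $\Gamma_Y$ taking a finite positive value.

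First I would assume that $\pi_1(Y)\to\pi_1(W)$ is trivial. Every $\SU(2)$-representation of $\pi_1(W)$ then restricts trivially to $\pi_1(Y)$, so no flat connection on $W$ can extend an irreducible flat connection on $Y$. Examining the possibilities in Definition \ref{eta}, a zero-energy configuration of type (ii), (iii), or (iv) contains a flat piece on a cylinder $\R\times Y$ or $\R\times Y'$; such a piece is pulled back from a flat connection on the three-manifold and hence cannot interpolate between $\Theta$ and an irreducible flat connection. Only type (i) could give $\eta(W)=0$, and this is ruled out by our hypothesis, so $\eta(W)>0$. Now pick $k_0$ with $0<\Gamma_Y(k_0)<\infty$ and combine the two inequalities to obtain $\Gamma_Y(k_0)\leq \max(\Gamma_Y(k_0)-\eta(W),0)$, which is impossible. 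The same argument applied to the orientation-reversed cobordism (a homology cobordism $Y'\to Y$) shows $\pi_1(Y')\to \pi_1(W)$ is non-trivial as well.

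For the ``in particular'' statement, assume $h(Y)\neq 0$. If $h(Y)>0$, Proposition \ref{possible-values-Gamma} ensures $\Gamma_Y(1)<\infty$ and Proposition \ref{lower-bound-tau} gives $\Gamma_Y(1)\geq \tau(Y)>0$, so the previous paragraph forbids any simply-connected homology cobordism out of $Y$. If $h(Y)<0$, then $h(-Y)=-h(Y)>0$, and any simply-connected homology cobordism $W\colon Y\to Y'$ becomes, after orientation reversal, a simply-connected homology cobordism from $-Y$ to $-Y'$; applying the previous case to this reversed cobordism (and noting $\Gamma_{-Y}=\Gamma_{-Y'}$ by Proposition \ref{hom-inv-Gamma}) yields the same contradiction. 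The main obstacle is the step $\eta(W)>0$; once the broken-configuration analysis above is accepted, the remainder is straightforward inequality-juggling together with the sign bookkeeping for the Fr\o yshov invariant.
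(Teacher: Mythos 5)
Your proof is correct and follows essentially the same route as the paper: combine $\Gamma_Y=\Gamma_{Y'}$ (homology-cobordism invariance) with the inequality $\Gamma_{Y'}(k)\leq\max(\Gamma_Y(k)-\eta(W),0)$ from Theorem \ref{inc-energy}, observe that triviality of a boundary-inclusion $\pi_1$-map rules out the flat (zero-energy) configurations of Definition \ref{eta} and hence forces $\eta(W)>0$, and then deduce the ``in particular'' clause from Propositions \ref{possible-values-Gamma} and \ref{lower-bound-tau} together with orientation reversal to handle $h(Y)<0$. Your orientation-reversal bookkeeping and the type-(ii)/(iii)/(iv) elimination are slightly more explicit than the paper's terse version, but the underlying argument is the same.
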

\begin{proof}
	Let $W:Y\to Y'$ be a homology cobordism and the fundamental group of $Y$ or $Y'$ map trivially to that of $W$. On one hand, $\Gamma_{Y}=\Gamma_{Y'}$.
	On the other hand, $\eta(W)>0$ and $\Gamma_{Y}$, $\Gamma_{Y'}$ satisfy the inequality given in 
	Theorem \ref{inc-energy}. Thus a positive value in the image of $\Gamma_{Y}$ is a contradiction.
	
	If $h(Y)\neq 0$, then either $h(Y)>0$ or $h(-Y')>0$. Propositions \ref{possible-values-Gamma} and 
	\ref{lower-bound-tau} imply that either $\Gamma_{Y}$ or 
	$\Gamma_{-Y'}$ has a finite positive values in its image. Since
	$W$ can be also regarded as a cobordism from $-Y'$ to $-Y$, it cannot be a simply connected.
\end{proof}

\begin{remark} \label{man-per-ends}
	In his groundbreaking work \cite{Tau:gauge-per}, Taubes proves that if $Y$ is an integral homology sphere and $W:Y\to Y$ 
	is a simply connected homology cobordism (or more generally a definite cobordism), then $Y$ cannot bound a 
	simply connected negative definite smooth manifold with a non-standard intersection form. 
	As it is stated in Theorem \ref{NSIF-neg-def} and will be proved in Section \ref{diag-gen}, if $Y$ bounds a manifold $X$ 
	with non-standard negative definite form (without any assumption
	on $\pi_1(X)$), then $\Gamma_Y(1)$ is a finite positive number. Therefore, Theorem \eqref{inc-energy} gives a new proof
	of Taubes' result.
	
	Taubes' method (gauge theory on manifolds with periodic ends) 
	can be adapted to prove the second half of Corollary \ref{hom-cob-simp}. This was firstly pointed to the author by Chris Scaduto. The author 
	learnt later from Masaki Taniguchi that this method is also used in \cite{Mas:per} 
	and a proof of the second half of Corollary \ref{hom-cob-simp} is implicit there. 
	Given a simply connected homology cobordism $W:Y\to Y$,
	we can from a 4-manifold $M$ with a periodic end and a cylindrical end in the following way. 
	For each non-negative integer $i$, let $W_i$ be a copy of $W$. We fix a metric on $W$ 
	which is cylindrical in a neighborhood of the boundary components
	corresponding to a fixed metric on $Y$. For each positive integer $i$ we identify the outgoing end of $W_{i+1}$ 
	with the incoming end of $W_i$. We also glue a copy of $[0,\infty)\times Y$ to the outgoing end of $W_0$.
	Applying the method of \cite{Tau:gauge-per} to the moduli spaces of ASD connections on $M$, which 
	are asymptotic to the trivial connection 
	on the cylindrical end and have finite energy, shows that $h(Y)$ has to vanish.
\end{remark}

Corollary \ref{hom-cob-simp} implies that the answer to Question \ref{Akbulut-question} is negative if the Rokhlin homomorphism $\mu:\Theta_\Z^3\to \Z/2\Z$ is replaced with $\Gamma_Y$. In general, there are integral homology spheres with non-trivial Rokhlin invariant whose $\Gamma_Y$ does not take any positive value. For example, $\Gamma_{\Sigma(2,3,7)}$ does not take any finite positive value by Example \ref{p-q-pqk+1}. However, $\mu(\Sigma(2,3,7))$ is non-trivial. In \cite{D:spin-Gamma}, we shall show that Corollary \ref{hom-cob-simp} can be extended to other families of integral homology spheres including $\Sigma(2,3,7)$.

\section{Relation to Fintushel and Stern's $R$-invariant} \label{R-inv}

For $n\geq 3$, suppose $a_1$, $\dots$, $a_n$ are relatively prime positive integers. Throughout this section, we denote the Seifert fibered homology sphere $\Sigma(a_1,a_2,\dots a_n)$ by $Y$ unless otherwise is specified. The following elementary results about topology of Seifert fibered homology spheres is standard. (See, for example, \cite{FS:pseudofree}.) The 3-manifold $Y$ admits a standard $S^1$-action and the quotient space is $S^2$. Suppose $W=Y\times D^2/S^1$ where the action of $S^1$ is induced by the Seifert action on $Y$ and the standard action on the 2-dimensional disc $D^2$. Then $W$ has $n$ singular points, one for each special fiber of $Y$. A neighborhood of the $i^{\rm th}$ singular point is given by a cone over the lens space $L(a_i,b_i)$ where the constants $b_i$ satisfy the following identity:
\[
  \sum_{i=1}^{n}\frac{b_i}{a_i}=\frac{1}{a}
\]
and $a=a_1\cdot a_2 \dots a_n$. Let $W_0$ denote the complement of regular neighborhoods of the singular points of $W$. Then $W_0$ is a 4-manifold that:
\[
  \partial W_0=Y\sqcup -L(a_1,\beta_1) \sqcup \dots \sqcup -L(a_n,\beta_n).
\]

There is an obvious projection map from $L=Y\times D^2$ to $W$, that induces a $\U(1)$-bundle on $W_0$. We will write $L_0$ for this $\U(1)$-bundle on $W_0$. Then $w_0=c_1(L_0)$ generates $H^2(W_0;\Z)$ and its restriction to the lens space boundary component $L(a_i,b_i)$ is $b_i$ times the standard generator of the second cohomology of this lens space. Moreover, $W_0$ is negative definite and $c_1(L_0)^2=-\frac{1}{a}$. In particular, if we add cylindrical ends to $W_0$ and form $W_0^+$, then there is an abelian ASD connection $B$ on the bundle $L_0$. We assume that the metric on $W_0^+$ is compatible with the standard spherical metrics on the lens space ends. 
The main goal of this section is to prove Theorem \ref{seifert-space-comp-thm}. At the outset, we mention how $R(a_1,\dots,a_n)$ enters into the proof of this theorem. Suppose $\theta$ is the trivial connection on the the trivial line bundle $\underline \C$ over $W_0$. Then we can form the path\footnote{Previously, we defined paths along a cobordism $W$, which are given by equivalence classes of $\SU(2)$-connections on $W$ with respect to an appropriate equivalence relation. More generally, we can define paths for a pair $(W,w)$ where $W$ is a 4-dimensional cobordism and $w\in H^2(W,\Z)$. The cohomology class $w$ determines a $\U(1)$-bundle on $W$ and we fix a connection $B$ on this bundle. Similar to the case of $\SU(2)$-connections, we define an equivalence relation on $\U(2)$-connections whose central parts are equal to $B$. Any equivalence class of this relation is called a path along $(W,w)$.} $z_0$ along $(W_0,w_0)$ which is represented by the ASD connection $B\oplus \theta$. Then the index formula \cite{MMR:L2-mod-space,Taubes:L2-mod-space} imply that the expected dimension of the moduli space $\mathcal M_{z_0}(W_0,w_0)$, which contains the reducible connection $B\oplus \theta$, is equal to $R(a_1,\dots,a_n)$ of \eqref{Ra1an}. Suppose $\beta_i$ is the unique positive integer less than $a_i$ such that $1+\beta_i\frac{a}{a_i}$ is divisible by $a_i$. Then the formula for $R(a_1,\dots,a_n)$ can be simplified to $2b-3$ where $b$ is given by \cite{NZ:R-com}:
\begin{equation}\label{dim-form-2}
	b=\frac{1}{a}+\sum_{i=1}^n\frac{\beta_i}{a_i}.
\end{equation}

We fix an $\epsilon$-admissible perturbation $\pi$ of the Chern-Simons functional of $Y$. As in Subsection \ref{tilde-func}, we can fix a secondary perturbation $\overline \pi$ of the ASD equation on $W_0^+$ which is compatible with the chosen perturbation of the Chern-Simons functional of $Y$ and the trivial perturbations of Chern-Simons functional of the lens space boundary components of $W_0$. Moreover, we can assume that the norm of the perturbation term $\overline \pi$ is less than $\epsilon$ and all moduli spaces with dimension less than $8$ consist of regular elements \cite{Don:YM-Floer}. The following proposition can be used to prove half of Theorem \ref{seifert-space-comp-thm}.

\begin{lemma}\label{upper-bound-seifert}
	Suppose $b$ is an integer greater than $1$ and $n_0$ denotes $\lfloor \frac{b}{2}\rfloor-1$. Define $\gamma_0\in C_*^\pi(Y) $ as 
	follows:
	\begin{equation}\label{z-element}
	\gamma_0:=\left\{
	\begin{array}{ll}
  	  \displaystyle {\sum_{\alpha} \#\mathcal M_z^{\overline \pi} (W_0,w_0;\alpha) \lambda^{\mathcal E(z)}\alpha}&
	  \hspace{1cm} \text{b is even,}\\
	  \displaystyle {\sum_{\alpha} \#(\mathcal M_z^{\overline \pi} (W_0,w_0;\alpha)\cap V(\Sigma_0)) \lambda^{\mathcal E(z)}\alpha}&
	  \hspace{1cm} \text{b is odd.}
	\end{array}
	\right.  
	\end{equation}	
	where $z$ is a path along $(W_0,w_0)$ that restricts to a generator $\alpha$ of $C_*^\pi(Y)$
	on $Y$ and to the same flat connections on the lens space boundary components as $B\oplus \theta$.
	Moreover, $\Sigma_0$ is an embedded surface representing a generator of $H_2(W)$ and $V(\Sigma_0)$ denotes a divisor representing the homology class $\mu(\Sigma_0)$ on the configuration spaces 
	of connections on $W_0$ \cite{DK}.
	Then $\gamma_0$ satisfies the following properties: 
	\begin{itemize}
		\item[(i)] $d(\gamma_0)=0$;
		\item[(ii)] $D_1\circ U^{k}(\gamma_0)=0$ where $k<n_0$;
		\item[(iii)] $D_1\circ U^{n_0}(\gamma_0)\neq 0$. Moreover, if $\delta$ is an arbitrary positive number, then by choosing
		$\epsilon$ small enough, we have: 
			\begin{equation} \label{mdeg-ineq}
				\mdeg(D_1\circ U^{n_0}(\gamma_0))-\mdeg(\gamma_0)\leq \frac{1}{4a}+\delta.
			\end{equation}	
	\end{itemize}
\end{lemma}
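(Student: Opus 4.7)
The plan is to study the moduli spaces $\mathcal M_z^{\overline \pi}(W_0, w_0; \alpha)$ on $W_0^+$ and exploit the reducible $\U(2)$-connection $B\oplus\theta$, which sits in $\mathcal M_{z_0}(W_0,w_0)$ with virtual dimension $R = 2b - 3$. First I would pin down the Floer grading of every irreducible generator $\alpha$ of $\gamma_0$. Concatenating a path on $W_0$ ending at $\alpha$ with a path on $\R\times Y$ from $\alpha$ to $\Theta$ produces a path on $W_0^+$ ending at $\Theta$, so the index formula yields $\dim \mathcal M_z(W_0,w_0;\alpha) \equiv R - \deg^-(\alpha) \pmod{8}$. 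Setting this equal to $0$ when $b$ is even, or equal to $2$ (so that the codimension-two $V(\Sigma_0)$-cut is $0$-dimensional) when $b$ is odd, forces $\deg^-(\alpha) \equiv 4n_0 + 1 \pmod{8}$ in both parities. Claim (ii) is then immediate: $U^k\gamma_0$ lies in Floer grading $4(n_0-k)+1$ while $D_1$ is supported on $C_1^\pi(Y)$, so $D_1 U^k \gamma_0 = 0$ for every $k < n_0$.

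For Claim (i), I would consider the $1$-dimensional moduli spaces $\mathcal M_z^{\overline\pi}(W_0, w_0; \beta)$ with $\deg^-(\beta) \equiv 4n_0 + 2 \pmod{8}$ (or their $3$-dimensional analogues cut by $V(\Sigma_0)$ when $b$ is odd) and enumerate the ends of their Uhlenbeck compactifications. The possible boundary strata are: broken trajectories on the $Y$-end, whose signed count assembles to $d\gamma_0$; broken trajectories on a lens-space end, which do not occur at this grading because the asymptotic on each $L(a_i,b_i)$ is fixed to agree with the restriction of $B\oplus\theta$ and the Floer-type relations on the lens spaces cancel any such breakings for $\overline\pi$ $\epsilon$-admissible; instanton bubbling, excluded by the transversality hypothesis on $\overline\pi$ in this dimension range; and convergence to the reducible $B\oplus\theta$, which has $\Theta$ on the $Y$-end and thus cannot bound a stratum ending at irreducible $\beta$. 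Summing ends with signs yields $d\gamma_0 = 0$.

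For Claim (iii), I would compute $D_1 U^{n_0}\gamma_0$ by a neck-stretching identification: the composition is the signed count of $0$-dimensional cut-down moduli spaces on $W_0^+$ asymptotic to $\Theta$ on $Y$, obtained from the ambient moduli space $\mathcal M_{z_0}(W_0^+; \Theta)$ of dimension $R$ by inserting $n_0$ copies of the point divisor $V(x)$ along the $Y$-neck (and $V(\Sigma_0)$ when $b$ is odd), together with the $\R$-translation modding that witnesses the gluing of the $\R\times Y$ piece. The ambient moduli space contains the reducible $B\oplus\theta$, whose local contribution to the cut-down count is computed by an equivariant Euler-class pairing against its $\U(1)$-stabilizer: the point divisor evaluates on the normal cone to the reducible as a nonzero rational multiple of $c_1(L_0)^2 = -1/a$, so the total reducible contribution is a nonzero rational multiple of $(-1/a)^{n_0}$. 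This contribution appears with $\lambda$-coefficient $\lambda^{\mathcal E(z_0)}$ where $\mathcal E(z_0) = -c_1(L_0)^2/4 = 1/(4a)$ is the minimal energy among paths on $(W_0^+, w_0)$ with the prescribed boundary data; since every other such path has strictly larger energy by positivity of the relative second Chern number, no cancellation occurs at the leading coefficient. This proves both $D_1 U^{n_0}\gamma_0 \neq 0$ and the bound \eqref{mdeg-ineq}, with the additive $+\delta$ absorbing the $O(\epsilon)$ shift in energies introduced by the holonomy perturbations $\pi$ and $\overline\pi$.

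The hardest step is the equivariant Euler-class computation at the reducible $B\oplus\theta$: one must rigorously identify the local contribution as a nonzero rational multiple of $(c_1(L_0)^2)^{n_0}$ and verify that no irreducible stratum at the leading energy $1/(4a)$ cancels it. This is essentially the content of Fintushel--Stern's $R$-invariant calculation transported into the $\Lambda$-coefficient framework of this paper, and the smallness of $\epsilon$ together with the admissibility of $\overline\pi$ is precisely what ensures that the $\lambda^{1/(4a)}$-coefficient of $D_1 U^{n_0}\gamma_0$ is accounted for uniquely by this reducible contribution.
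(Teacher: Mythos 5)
Your argument for part (ii) has a genuine gap: the Floer grading is valued in $\Z/8\Z$, not $\Z$. While $U^k\gamma_0$ lies in grading $4(n_0-k)+1 \bmod 8$ and $D_1$ is supported on grading $1$, the residue $4(n_0-k)+1 \equiv 1 \pmod 8$ whenever $n_0-k$ is even, so for $k\in\{n_0-2, n_0-4, \dots\}$ your grading argument gives no information about $D_1U^k\gamma_0$. The paper closes this with an energy computation rather than a grading computation: a nonzero $D_1U^k\gamma_0$ would force a broken trajectory $(A_0,\dots,A_{k+1})$ whose indices sum to $4k+1$, and additivity of index against the reference path through $B\oplus\theta$ then pins $\sum_i\mathcal E(A_i)$ to a quantity that is strictly negative and bounded away from zero for $k<n_0$; since each $A_i$ solves a perturbed ASD equation with perturbation norm controlled by $\epsilon$, this is impossible once $\epsilon$ is small. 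The same computation at $k=n_0$ is what produces the precise constant $\frac{1}{4a}$ in \eqref{mdeg-ineq}; your alternative derivation of that bound as ``$\mathcal E(z_0)$ is the minimal energy among paths'' is not a substitute for it.

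For part (iii) you take a genuinely different route from the paper, and the details are not correct as stated. You write that the point divisor evaluates on the normal cone to the reducible as a nonzero rational multiple of $c_1(L_0)^2 = -\frac{1}{a}$, so the reducible contributes a multiple of $(-1/a)^{n_0}$. This conflates two unrelated quantities: the restriction of the $\mu(\text{pt})$-class to the link $\mathbf{CP}^{b-2}$ of the reducible is the integral class $-h^2$, so the link count is a nonzero integer; the number $c_1(L_0)^2 = -1/a$ governs the energy and hence the power of $\lambda$, not the sign-count coefficient. More seriously, the assertion that ``no cancellation occurs at the leading coefficient because every other path has strictly larger energy'' is unjustified: nothing a priori rules out irreducible ASD connections contributing at the same energy level with opposite signs. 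The paper sidesteps both issues. It introduces the intermediate cut-down cycles $\gamma_k$, proves $\gamma_{k+1}-U\gamma_k$ is a coboundary so that $D_1U^{n_0}\gamma_0=D_1\gamma_{n_0}$, and then applies the boundary relation for the single $1$-dimensional moduli space $Z=\mathcal M'\cap V(x_1)\cap\dots\cap V(x_{n_0})$: its ends consist of the link count (nonzero by the $h^2$ evaluation) and the broken trajectories giving $D_1(\gamma_{n_0})$, forcing the latter to be nonzero. No gluing, neck-stretching, or anti-cancellation claim is needed, and those intermediate $\gamma_k$'s, which do the real work, are entirely absent from your sketch.
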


\begin{proof}
	We firstly assume that $b$ is even.
	The coefficient of $\beta$ in $d(\gamma_0)$ can be identified with the number 
	of the boundary points of the compactification of the moduli space:
	\[\mathcal M^{\overline \pi}_{z}(W_0,w_0,\beta)\]
	where $z$ is a path along $(W_0,w_0)$ such that the above moduli space is 1-dimensional, 
	the restriction of $z$ to $Y$
	is $\beta$, and $z$ has the same restriction to the lens space boundary components as $B\oplus \theta$. 
	Here we use the fact that we do not have any boundary component induced by 
	sliding energy off the lens space ends because any flat connection on a lens space is reducible.
	
	For a non-negative integer $k$, if $D_1\circ U^{k}(z_0)$ is non-zero, then there is a $(k+2)$-tuple of ASD connections $(A_0,A_1,\dots,A_k,A_{k+1})$ such that $A_0$ is an ASD connection 
	contributing to the sum in \eqref{z-element}, $A_i$, for $1\leq i\leq k$, is an ASD connection on $\R\times Y$ 
	between irreducible flat connections $\alpha_{i-1}$ and $\alpha_i$ contributing to $U(\alpha_{i-1})$,
	and $A_{k+1}$ is an ASD connection on $\R\times Y$ from $\alpha_{k}$ to $\Theta$ contributing to $D_1(\alpha_{k})$. 
	In particular, the index of $A_0$ is $0$, the index of $A_i$, for $1\leq i \leq k$, is 
	$4$ and the index of $A_{k+1}$ is $1$. Therefore, the sum of the indices of these ASD connections is $4k+1$. We can glue the connections $A_i$ to obtain a connection which has the same asymptotic flat connections
	as $B\oplus \theta$ on the boundary of $W_0$. Therefore, the additivity of ASD index with respect to 
	composition of cobordisms implies that:
	\begin{align*}
		4k+1&=8(\sum_{i=0}^{k+1}\mathcal E(A_i)-\mathcal E(B\oplus \theta))+2b-3\\
		&=8\sum_{i=0}^{k+1}\mathcal E(A_i)+\frac{2}{a}+2b-3
	\end{align*}
	In particular, if $k<n_0$, then the sum of the topological energies of the connections $A_i$ is at most 
	$-\frac{1}{2}+\frac{1}{4a}$.\footnote{In fact, this inequality can be improved to $-1+\frac{1}{4a}$ 
	because $k$ and $n_0$ have the same parity.} Since 
	the connections $A_i$ are solutions of perturbed ASD equation, it is a contradiction if $\epsilon$ is small enough. This verifies the claim in part (ii). In the case that $k=n_0$, the above identity implies that the sum of $\mathcal E(A_i)$ is equal to 
	$\frac{1}{4a}$. Therefore, the inequality in \eqref{mdeg-ineq} holds for a small enough $\epsilon$, if we show that $D_1\circ U^{n_0}(\gamma_0)$ 
	does not vanish.

	Fix a positive integer $k$ smaller than $\frac{b}{2}$. In order to address the last part of the proposition, we introduce 
	$\gamma_k\in C_*^\pi(Y)$ which is defined similar to $\gamma_0$. Fix $n_0$ distinct 
	points $x_1$, $\dots$, $x_{n_0}$ on $W_0$. For any $1\leq i \leq n_0$, a standard construction in Donaldson theory
	allows us to form a co-dimension 4 divisor $V(x_i)$ in the configuration space 
	of irreducible connections on $W_0$, which is a geometric representative for 
	$\mu({\rm point})$ \cite{KM:Polynomial}. Define:
	\begin{equation}\label{z-k}
  	  \gamma_k:=\sum_{\alpha} \#\(\mathcal M_z^{\overline \pi} (W_0,w_0;\alpha)
	  \cap V(x_1)\cap \dots\cap V(x_k)\) \lambda^{\mathcal E(z)}\alpha
	\end{equation}	
	Here we may assume that the divisors are chosen such that the intersection \eqref{z-k} is transversal. 
	In particular, a path $z$ contributes to the above sum, if the dimension of the moduli space
	$\mathcal M_z^{\overline \pi} (W_0,w_0;\alpha)$ is equal to $2k$. 
	A similar argument as in the case of $\gamma_0$ shows that $\gamma_k$ 
	is a cycle. 
	Moreover, we can show that $\gamma_{k+1}$ and $U(\gamma_k)$ differ by a co-boundary element
	in $C_*^\pi(Y)$. To see this, we allow $x_{k+1}$ to move off the boundary component $Y$ of $W_0$ and 
	consider the associated 1-parameter family of moduli spaces. Cutting this moduli space by the divisors
	$V(x_1)$, $\dots$, $V(x_k)$ and studying its ends give the desired relation between 
	$\gamma_{k+1}$ and $U(\gamma_k)$.\footnote{The essential points here are the fact that 
	any flat connection on a lens space is reducible
	and we also do not face any reducible connection on $W_0$ in our analysis of the relevant moduli spaces.} 
	In particular, $D_1U^{n_0-k}(\gamma_k)$ is independent of $k$.
	
	Consider the moduli space $\mathcal M^{\overline \pi}_{\gamma_0}(W_0,w_0)$ 
	which contains the class of the reducible connection $B\oplus \theta$. 
	The divisors $V (x_i)$ is defined only on the complement of the 
	reducible connection. In fact,
	a neighborhood of the reducible connection is a cone over the projective space  ${\bf CP}^{b-2}$ 
	and the restriction of $V(x_i)$ to the boundary of this cone represents the cohomology class $-h^2$
	where $h$ is the generator of $H^2({\bf CP}^{b-2})$ \cite[Subsection 5.1.2]{DK}. 
	Let $\mathcal M'$ be the complement of this neighborhood of the reducible element of 
	$\mathcal M_{\gamma_0}^{\overline \pi}(W_0,w_0)$ and $N'$ denotes the boundary of this neighborhood. 
	Therefore, we can form
	the 1-dimensional moduli space:
        \[
          Z=\mathcal M'\cap V(x_1) \cap \dots \cap V(x_{n_0}).
        \]	
	We compactify $Z$ in the standard way to form a compact 1-manifold whose boundary is given by 
	$N'\cap V(x_1) \cap \dots \cap V(x_j)$ and
        \begin{equation} \label{bdry-type-2}
          \coprod_{\gamma\#\gamma'=\gamma_0} (\mathcal M_{\gamma} (W_0,w_0;\alpha)
          \cap V(x_1) \cap \dots \cap V(x_j))\times\breve {\mathcal M}_{\gamma'}(\alpha,\Theta) 
        \end{equation}
	Since the count of points in $N'\cap V(x_1) \cap \dots \cap V(x_j)$ does not vanish, the count of elements in 
	\eqref{bdry-type-2} is also non-zero. The latter count is equal to $D_1(\gamma_{n_0})$ by definition.
	Thus we conclude that $D_1\circ U^{n_0}(\gamma_0)$ is nonzero. 
	Analogous arguments can be used to prove similar claims in the case that $b$ is odd. 
	The only required new ingredient is that the restriction of the cohomology class $V(\Sigma_0)$ to 
	${\bf CP}^{b-2}$ is a non-zero multiple of $h$. (In fact, it is equal to 
	$-\langle c_1(L_0),\Sigma_0\rangle h$ \cite[Subsection 5.1.2]{DK}.)
\end{proof}

\begin{prop}\label{seifert-space-comp-prop}
	For $Y$ as above and $1\leq i \leq \lfloor \frac{b}{2}\rfloor$, we have:
	\[
	  \Gamma_{Y}(i)=\frac{1}{4a}.
	\]
\end{prop}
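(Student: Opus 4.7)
The plan is to prove the proposition by sandwiching $\Gamma_Y(i)$ between two estimates: an upper bound coming from the construction of Lemma \ref{upper-bound-seifert}, and a matching lower bound obtained by combining Proposition \ref{lower-bound-tau} with the well-known computation of Chern--Simons values on flat $\SU(2)$-connections over Seifert fibered homology spheres.

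For the upper bound, I first fix a small positive $\delta$ and choose an $\epsilon$-admissible perturbation $\pi$ with $\epsilon$ small enough that Lemma \ref{upper-bound-seifert} applies. Let $\gamma_0 \in C_*^{\pi}(Y)$ be the cycle constructed there, and set $n_0 := \lfloor b/2 \rfloor - 1$. Then $d\gamma_0 = 0$, $D_1 U^{j}(\gamma_0) = 0$ for $j < n_0$, and $\mdeg(D_1 U^{n_0}(\gamma_0)) - \mdeg(\gamma_0) \leq \tfrac{1}{4a} + \delta$. To turn $\gamma_0$ into an element of $\mathcal L^{\pi}_{n_0+1}$, I apply the projection ${\rm P}_{r_0, 4n_0+1}$ to its homogeneous summand carrying the minimal weight that still survives under $D_1 U^{n_0}$, and then rescale by a suitable power of $\lambda$. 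Lemma \ref{proj-homog} guarantees that this projection commutes with $d$, $U$, $D_1$, and $D_2$, so the rescaled element $\gamma_0' \in C_{(0, 4n_0+1)}^{\pi}(Y)$ still satisfies $d\gamma_0' = 0$ and $D_1 U^{n_0}(\gamma_0') \neq 0$ while $D_1 U^{j}(\gamma_0') = 0$ for $j < n_0$, and the difference $\mdeg(D_1 U^{n_0}(\gamma_0')) - \mdeg(\gamma_0')$ is no larger than before. Substituting $\gamma_0'$ into the description of $\Gamma_Y$ via \eqref{Gamma1} restricted to $\mathcal L^{\pi}_{n_0+1}$, letting $\delta \to 0$, and using monotonicity of $\Gamma_Y$ (already established in Subsection \ref{prop-GammaY}), I conclude
\begin{equation*}
  \Gamma_Y(i) \leq \Gamma_Y(\lfloor b/2 \rfloor) \leq \tfrac{1}{4a} \qquad \text{for all } 1 \leq i \leq \lfloor b/2 \rfloor.
\end{equation*}

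For the lower bound, the key input is the standard arithmetic fact that for the Seifert fibered homology sphere $Y = \Sigma(a_1,\dots,a_n)$, every flat $\SU(2)$-connection has $\CS$ value lying in $\tfrac{1}{4a} \Z \bmod \Z$. This follows from the explicit description of flat connections as products of rotations by angles of the form $k\pi/a_i$, together with the Fintushel--Stern formula that realizes $\CS$ modulo $\Z$ as an integer divided by $4a$ (see the references cited in Example \ref{simple-examples}, in particular \cite{FS:HFSF} and the exposition in \cite{AX:suture-higher}). Consequently the right-hand side of \eqref{tau-gr} is at least $\tfrac{1}{4a}$, hence $\tau(Y) \geq \tfrac{1}{4a}$. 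Proposition \ref{lower-bound-tau} then yields $\Gamma_Y(i) \geq \tau(Y) \geq \tfrac{1}{4a}$ for every positive integer $i$.

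Combining the two inequalities gives $\Gamma_Y(i) = \tfrac{1}{4a}$ for $1 \leq i \leq \lfloor b/2 \rfloor$, as claimed. The main technical points requiring care are the commutation between the projection onto homogeneous weight-spaces and the operators $d$, $U$, $D_1$ (needed to legitimately replace $\gamma_0$ by an element of $\mathcal L^{\pi}_{n_0+1}$ without worsening the estimate in (iii) of Lemma \ref{upper-bound-seifert}), and the invocation of the arithmetic statement about $\CS$-values, which, while standard, is what actually forces $\tau(Y) \geq 1/(4a)$. The obstacle I expect to be mildly awkward is bookkeeping the simultaneous choice of $\epsilon$ (small enough for the lemma and for the bound $\Gamma_Y(1) \geq \tau(Y)$ to be approximately saturated) so that the limit $|\pi|_{\mathcal P}\to 0$ in the definition of $\Gamma_Y$ can be taken coherently.
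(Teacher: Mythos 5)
Your proof is correct and follows the same sandwich strategy as the paper: the upper bound $\Gamma_Y(i)\leq\tfrac{1}{4a}$ comes from Lemma \ref{upper-bound-seifert}, and the lower bound from the $\tfrac{1}{4a}\Z$-quantization of Chern--Simons values on Seifert homology spheres. The one genuine (if minor) difference is in how the lower bound is assembled. You derive it by noting $\tau(Y)\geq\tfrac{1}{4a}$ from \eqref{tau-gr} and then invoking Proposition \ref{lower-bound-tau}. The paper instead uses Proposition \ref{lower-bound-tau} only to get positivity $\Gamma_Y(i)>0$, then invokes Proposition \ref{CS-val} (which says $\Gamma_Y(i)$ is either $0$, $\infty$, or exactly a CS value of an irreducible flat connection) together with the upper bound to conclude $\Gamma_Y(i)\in(0,\tfrac{1}{4a}]\cap\tfrac{1}{4a}\Z=\{\tfrac{1}{4a}\}$. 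Your route is arguably more elementary since it avoids \ref{CS-val}; the paper's route is cleaner in the sense that \ref{CS-val} pins the value to a CS value rather than merely bounding it. Both are valid.

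One remark on the upper bound: the projection $P_{r_0,4n_0+1}$ and the passage to $\mathcal L^{\pi}_{n_0+1}$ are unnecessary. Formula \eqref{Gamma1} already expresses $\Gamma_Y(k)$ as an infimum over all $\alpha$ satisfying \eqref{alpha-pos}, so the cycle $\gamma_0$ from Lemma \ref{upper-bound-seifert} can be substituted directly; the refinement to $\mathcal L^\pi_k$ shrinks the infimum's domain without changing its value and is not needed to extract an upper bound. This does not affect correctness, just efficiency.
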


\begin{proof}
	Lemma \ref{upper-bound-seifert} asserts that $\Gamma_Y(i)\leq \frac{1}{4a}$. It is shown in \cite{FS:HFSF} that the value of the 
	Chern-Simons functional for $\SU(2)$-flat connections on $Y$ has the form $\frac{l}{4a}$ where $l$ is an 
	integer\footnote{In fact, the 4-manifold $W_0$ plays a key role in this part, too. 
	The main point is that any $\SU(2)$ flat connection on $Y$
	extends to a flat connection on $W_0$.}. (See also \cite{Au:CS-comp}.) 
	Since $\Gamma_Y(i)$ is positive by Proposition \ref{lower-bound-tau}, Proposition \ref{CS-val} implies that 
	$\Gamma_Y(i)\geq \frac{1}{4a}$. This completes the proof.
\end{proof}

Theorem \ref{seifert-space-comp-thm} is a generalization of  Proposition \ref{seifert-space-comp-prop} to the case that we consider a connected sum of Seifert fibered spaces with positive $R$-invariants:

\begin{proof}[Proof of Theorem \ref{seifert-space-comp-thm}]
	Let $M$ be the standard cobordism from the disjoint union of 3-manifolds $Y_i$ to $Y_1\#\dots \# Y_k$ 
	obtained by gluing 1-handles.
	For each $i$, we follow the above construction to construct a cobordism $W_{i,0}$ from a 
	disjoint union of lens spaces to $Y_i$.
	We can glue $W_{1,0}$, $\dots$, $W_{k,0}$ to $M$ and obtain $\widetilde M$ a cobordism from a 
	disjoint union of lens spaces to $Y_1\#\dots \# Y_k$. For each $i$, we also constructed a $\U(1)$-bundle $L_{i,0}$ 
	on $W_{i,0}$. We extend the bundle $L_{1,0}$
	on $W_{1,0}$ trivially to $\widetilde M$ and denote the resulting bundle by $\widetilde L$. 
	By applying the same argument as in the proof of Lemma \ref{upper-bound-seifert} where $W_0$ and $L_0$ 
	are replaced with $\widetilde M$ and $\widetilde L$, we can show that for 
	$1\leq i \leq \lfloor \frac{R(a_1,\dots,a_n)+3}{4}\rfloor$:
	\[
	  \Gamma_{Y}(i)=\frac{1}{4a_1a_2\dots a_n}.
	\]
	The reverse inequality is also a consequence of Lemma \ref{lem-ineq-D1-tau} below.
\end{proof}

\begin{lemma}\label{lem-ineq-D1-tau}
	For integral homology spheres $Y$ and $Y'$, we have:
	\begin{equation} \label{ineq-D1-tau}
	  \Gamma_{Y\#Y'}(1)\geq \min\{\tau(Y),\tau(Y')\}
	\end{equation}
\end{lemma}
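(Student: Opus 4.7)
The plan is to realize $\Gamma_{Y\#Y'}(1)$ by the topological energy of an ASD connection on $\R\times(Y\#Y')$ from an irreducible flat connection to the trivial one, then stretch the separating $S^2$-neck to extract ASD pieces on $\R\times Y$ and $\R\times Y'$, and finally invoke Definition \ref{tau} on one of those pieces. I may assume $\Gamma_{Y\#Y'}(1)<\infty$, for otherwise the inequality is automatic.

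First I would pick a family of Riemannian metrics $g_T$ on $Y\#Y'$ containing a cylindrical neck $[-T,T]\times S^2$ separating $Y\setminus B^3$ from $Y'\setminus B^3$, together with $\epsilon_T$-admissible perturbations $\pi_T$ of the associated Chern-Simons functional whose supports are disjoint from the neck and with $\epsilon_T\to 0$. For each $T$, I would choose $\alpha_T\in\mathcal L_1^{\pi_T}(Y\#Y')$ almost realizing the infimum defining $\Gamma_{Y\#Y'}(1)$. As in the proof of Proposition \ref{lower-bound-tau}, this supplies an ASD connection $A_T\in\mathcal M^{\pi_T}_{z_T}(\alpha_T',\Theta)$ with $\alpha_T'$ an irreducible critical point of $\CS+f_{\pi_T}$ and $\mathcal E(A_T)\to\Gamma_{Y\#Y'}(1)$.

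Next I would let $T\to\infty$ and apply Uhlenbeck compactness to the sequence $A_T$. Because every flat $\SU(2)$-connection on $S^2$ is trivial, the connection on the stretched neck must converge at its two ends to the trivial flat connection, and the removable-singularities theorem then upgrades the two complementary pieces to honest ASD connections $A'$ on $\R\times Y$ and $A''$ on $\R\times Y'$. Translating in the $\R$ direction, as in Step~3 of Lemma \ref{tau-pos}, localizes the energy, and each piece acquires an incoming flat limit: $\beta'$ on $Y$ and $\beta''$ on $Y'$. The irreducibility of $\alpha_T'$, combined with the fact that any reducible $\SU(2)$-flat connection on an integral homology sphere is trivial, forces at least one of $\beta',\beta''$ to be non-trivial and hence irreducible. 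By Definition \ref{tau} the corresponding piece carries energy at least $\tau(Y)$ or $\tau(Y')$, while the other contributes at least zero, giving
\[
  \Gamma_{Y\#Y'}(1)=\lim_{T\to\infty}\mathcal E(A_T)\geq \mathcal E(A')+\mathcal E(A'')\geq \min\{\tau(Y),\tau(Y')\}.
\]

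The main obstacle will be carrying out the neck-stretching compactness argument rigorously in the presence of holonomy perturbations and possible bubbling. Keeping the perturbations $\pi_T$ supported uniformly away from the neck, together with exponential-decay estimates of the type proved in Lemma \ref{exp-decay} applied in the shrinking $S^2$-direction, should prevent energy from escaping down the neck in the limit; any bubbles that do form absorb positive energy and thus only strengthen the lower bound, so they pose no real threat.
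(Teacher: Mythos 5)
Your route—stretch the separating $S^2$ in $Y\#Y'$ so that a nearly extremal ASD trajectory on $\R\times(Y\#Y')$ splits into pieces on $\R\times Y$ and $\R\times Y'$, then invoke Definition \ref{tau}—is genuinely different from the paper's. The paper instead works on the compact $3$-handle cobordism $N\colon Y\#Y'\to Y\sqcup Y'$: starting from $\gamma\in\mathcal L_1^{\pi^\#}(Y\#Y')$, it analyses the boundary of the $1$-dimensional moduli spaces $\mathcal M^{\overline\pi,+}_{z_i}(N,\alpha_i,\Theta,\Theta')$ and, using $d\gamma=0$ and $D_1(\gamma)\neq 0$ in the resulting identity \eqref{co-mult-idenity}, forces a non-zero factored contribution through some $\breve{\mathcal M}_{x_2}^{\pi}(\R\times Y,\beta,\Theta)$ or $\breve{\mathcal M}_{y_2}^{\pi'}(\R\times Y',\beta',\Theta')$; Lemma \ref{tau-pos} then converts this into the energy bound. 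Both arguments aim to produce an ASD trajectory on one cylinder with an irreducible incoming end, but the paper manufactures it algebraically rather than extracting it by a degeneration.

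The key step of your write-up, however, has a genuine gap: the assertion that irreducibility of $\alpha'_T$ forces at least one of $\beta',\beta''$ to be non-trivial. The connections $\beta',\beta''$ are the incoming limits of the pieces you capture \emph{after} translating in the $\R$-direction as in Step~3 of Lemma \ref{tau-pos} and simultaneously sending $T\to\infty$; a priori these have no relation to $\alpha'_T$, since the translations push the irreducible end to $-\infty$, and everything you catch could have trivial ends (such pieces have energy $\geq 1$, which is not $\geq\min\{\tau(Y),\tau(Y')\}$ in general, so bubbling and trivial-ends pieces do not ``strengthen'' the bound on their own). Making this work requires iterating the translation together with the neck degeneration, and showing the process terminates with a piece whose incoming limit is the restriction of $\lim_T\alpha'_T$ to $Y$ or to $Y'$—bookkeeping that \eqref{co-mult-idenity} packages automatically. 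Secondarily, the compactness and decay you invoke are applied in a non-standard setting: the $4$-dimensional neck is $\R\times[-T,T]\times S^2$ with the non-compact slice $\R\times S^2$, so Lemma \ref{exp-decay} and the usual chain-of-instantons analysis on $\R\times(\text{compact $3$-manifold})$ do not apply verbatim, and capping off $\R\times B^3$ to obtain honest ASD connections on $\R\times Y$, $\R\times Y'$ is a codimension-$3$ extension that needs its own argument (a shrinking-neck model or a gluing/regularity lemma), not a citation of removable singularities. None of this is an obviously wrong idea, but as written the proof does not close, and closing it looks substantially harder than the paper's route through $N$.
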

This lemma strengthens Lemma \ref{lower-bound-tau} for integral homology spheres which are homology cobordant to connected sum of two integral homology spheres.

\begin{proof}
	Consider the standard cobordism $N$ from $Y\#Y'$ to $Y\sqcup Y'$ given by gluing a 3-handle. 
	Let $\epsilon_i$ be a sequence of positive real numbers converging to $0$.
	We fix $\epsilon_i$-admissible perturbations $\pi$, $\pi'$ and $\pi^\#$ on $Y$, $Y'$ and $Y\#Y'$ 
	and extend them to an $\epsilon_i$-admissible perturbation $\overline \pi$ on $N$. Suppose 
	$\gamma=\sum_{i=1}^{k}s_i\lambda^{r_i}\alpha_i$ is an element of $\mathcal L_1^{\pi^\#}(Y\#Y')$. 
	For each $i$, there is a path $z_i$ of index $1$ along $N$ from the connection $\alpha_i$ to the trivial
	connections $\Theta$ and $\Theta'$ on $Y\sqcup Y'$. 
	Let $\mathcal M^{\overline \pi,+}_{z_i}(N,\alpha_i,\Theta,\Theta')$ be the standard compactification of the 
	1-dimensional moduli space 
	$\mathcal M^{\overline \pi}_{z_i}(N,\alpha_i,\Theta,\Theta')$. Clearly, we have:
	\[
	  \sum_{i=1}^ks_i\lambda^{r_i+\mathcal E(z_i)}\cdot
	  \#\partial (\mathcal M^{\overline \pi,+}_{z_i}(N,\alpha_i,\Theta,\Theta'))=0.
	\]
	We can also analyze the boundary components of the moduli space 
	$\mathcal M^{\overline \pi,+}_{z_i}(N,\alpha_i,\Theta,\Theta')$ 
	to show that the above identity implies that:
	\[
	  F\circ d(\gamma)+D_1(\gamma)+\sum_i s_i\lambda^{r_i+\mathcal E(z_i)} \sum_{x_1\#x_2=z_i}
	  \mathcal M_{x_1}^{\overline \pi}(N,\alpha_i,\beta,\Theta)\times 
	  \breve{\mathcal M}_{x_2}^{\pi}(\R\times Y,\beta,\Theta)
	  +\hspace{2cm}
	 \]
	 \begin{equation} \label{co-mult-idenity}
	  \hspace{3cm}\sum_i s_i\lambda^{r_i+\mathcal E(z_i)} \sum_{y_1\#y_2=z_i}\mathcal 
	  M_{y_1}^{\overline \pi}(N,\alpha_i,\Theta,\beta')
	  \times \breve{\mathcal M}_{y_2}^{\pi'}(\R\times Y',\beta',\Theta)=0
	\end{equation}
	Here $F:C_*^{\pi^\#}(Y\#Y')\to \Lambda$ is induced by counting solutions of 
	the perturbed ASD equation on $N$ corresponding to paths of index $0$ along $N$ which restrict to an irreducible
	connection on $Y\#Y'$ and trivial connections on $Y$ and $Y'$. 
	The path $x_1$ (respectively, $y_1$) along $N$ has index $0$ and restricts to 
	the trivial connection on the end $Y'$ (respectively, $Y$) and to irreducible connections
	on the remaining ends. The path $x_2$ along $\R \times Y$ (respectively, $y_2$ along $\R \times Y'$) has index $1$
	and restricts to an irreducible connection on the incoming end and to the trivial connection
	on the outgoing end. Since $d(\gamma)=0$ and $D_1(\gamma)\neq 0$, \eqref{co-mult-idenity} implies that 
	either there is a path $x_2$ along $\R\times Y$ such that the moduli space 
	$\mathcal M_{x_2}^{\pi}(\R\times Y,\beta,\Theta)$ contains an element $A_i$ with:
	\[
	  \mdeg(D_1(\gamma))-\mdeg(\gamma)\geq \mathcal E(A_i)-\delta
	\]
	or there is a path $y_2$ along $\R\times Y'$ such that the moduli space 
	$\mathcal M_{y_2}^{\pi'}(\R\times Y',\beta',\Theta')$ contains an element $A_i'$ with:
	\[
	  \mdeg(D_1(\gamma))-\mdeg(\gamma)\geq \mathcal E(A_i')-\delta.
	\]
	Here $\delta$ is a constant which converges to $0$ as $i\to \infty$. 
	Now by letting $i$ go to $\infty$ and using Lemma \ref{tau-pos}, we can verify \eqref{ineq-D1-tau}.
\end{proof}

\begin{remark}
	Theorem \ref{seifert-space-comp-thm} and Lemma \ref{lem-ineq-D1-tau} give some instances 
	of relations among $\Gamma_{Y\#Y'}$, $\Gamma_Y$ and $\Gamma_{Y'}$. 
	In  \cite{D:conn-Gamma}, we study this relationship more systematically. 	
\end{remark}

\begin{cor}[\cite{Fur:hom-cob}]\label{linear-ind}
	Suppose $\{Y_i=\Sigma(a_{i,1}, \dots, a_{i,n_i})\}_{i\in I}$ is a collection of Seifert fibered homology spheres with 
	positive $R$-invariants such that the positive integers $a_i:=a_{i,1} \cdot a_{i,2} \dots a_{i,n_i}$ are distinct.
	Then the integral homology spheres $Y_i$ determine linearly independent elements of $\Theta_\Z^3$.	
\end{cor}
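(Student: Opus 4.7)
The strategy is to combine the homology cobordism invariance of $\Gamma_Y$ (Theorem~\ref{basic-prop}(ii)) with the explicit computation of $\Gamma_Y(1)$ for connected sums of Seifert fibered spheres with positive $R$-invariant (Theorem~\ref{seifert-space-comp-thm}). The distinct-products hypothesis enters precisely because Theorem~\ref{seifert-space-comp-thm} reads off $\Gamma_Y(1) = \frac{1}{4a_{\max}}$ from the largest product index among the summands, so distinct $a_i$ values yield distinct values of $\Gamma$.

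Suppose, toward a contradiction, that there is a nontrivial integer relation $\sum_i n_i [Y_i] = 0$ in $\Theta_\Z^3$, with all but finitely many $n_i$ equal to zero. Partition the indices with $n_i \neq 0$ into $I_+ = \{i : n_i > 0\}$ and $I_- = \{i : n_i < 0\}$. The relation is equivalent to the homology cobordism equivalence
\[
A_+ \;:=\; \#_{i \in I_+}\, n_i Y_i \;\sim\; \#_{i \in I_-}\, |n_i| Y_i \;=:\; A_-,
\]
obtained by adding $\#_{i \in I_-} |n_i| Y_i$ to both sides of the rewritten relation and using $Y \#(-Y) \sim S^3$. By Theorem~\ref{basic-prop}(ii), it follows that $\Gamma_{A_+}(1) = \Gamma_{A_-}(1)$.

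Now consider two cases. If one of $I_\pm$ is empty, say $I_- = \emptyset$, then $A_- \sim S^3$ and $\Gamma_{A_-}(1) = \infty$. On the other hand, $A_+$ is a nonempty connected sum of Seifert fibered homology spheres with positive $R$-invariants, so Theorem~\ref{seifert-space-comp-thm} applies and gives $\Gamma_{A_+}(1) = \frac{1}{4 a_+}$ where $a_+ = \max_{i \in I_+} a_i$, a finite positive number. This contradicts equality of the two invariants. If instead both $I_\pm$ are nonempty, Theorem~\ref{seifert-space-comp-thm} applies on each side and yields
\[
\Gamma_{A_+}(1) \;=\; \frac{1}{4 a_+}, \qquad \Gamma_{A_-}(1) \;=\; \frac{1}{4 a_-},
\]
with $a_\pm = \max_{i \in I_\pm} a_i$. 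Since $I_+$ and $I_-$ are disjoint and the $a_i$ are pairwise distinct, the two maxima are attained at different indices and are therefore unequal. Hence $\Gamma_{A_+}(1) \neq \Gamma_{A_-}(1)$, again contradicting the consequence of Theorem~\ref{basic-prop}(ii).

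In either case the assumption of a nontrivial relation leads to a contradiction, so all $n_i$ vanish and the classes $[Y_i]$ are linearly independent in $\Theta_\Z^3$. There is really no serious obstacle here: the entire technical content sits in the already-established Theorem~\ref{seifert-space-comp-thm}, and the remaining argument is a short case-split that converts the distinctness hypothesis on the $a_i$'s into distinct numerical values of $\Gamma_Y(1)$.
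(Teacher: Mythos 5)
Your argument is correct and follows essentially the same route as the paper: rewrite the putative relation as a homology cobordism equivalence between two connected sums with positive coefficients, invoke Theorem~\ref{seifert-space-comp-thm} to read off $\Gamma(1)$ on each side as $\frac{1}{4a_{\max}}$, and derive a contradiction from the distinctness of the $a_i$. The only difference is cosmetic: you explicitly treat the case where one side of the relation is empty (comparing against $\Gamma_{S^3}(1)=\infty$), which the paper leaves implicit, but this is a harmless elaboration rather than a genuinely different argument.
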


\begin{proof}
	If there is a linear relation among $Y_i$ we can assume that we have a relation of the following form:
	\[
	  n_1 Y_{i_1}+\dots+n_{k}Y_{i_k}=m_1 Y_{j_1}+\dots+m_{l}Y_{j_l}
	\]
	such that all integers $n_r$ and $m_s$ are positive, and $i_r$ and $j_s$ are all distinct elements of $I$. Evaluating
	the invariant $\Gamma$ at $1$ for the 3-manifolds given above implies that:
	\[
	  \frac{1}{4\max\{a_{i_1}, \dots,a_{i_k}\}}= \frac{1}{4\max\{a_{j_1}, \dots,a_{j_l}\}}
	\]
	which is in contradiction with the assumption that the integers $a_i$ are distinct.
\end{proof}

\begin{example}\label{pqk-1}
	If $p$, $q$ are coprime positive integers and $k$ is another positive integer, then $R(\Sigma(p,q,pqk-1))=1$. Therefore, Proposition \ref{seifert-space-comp-prop} implies that:
	\[\Gamma_{\Sigma(p,q,pqk-1)}(1)=\frac{1}{4pq(pqk-1)}.\]
	Corollary \ref{linear-ind} implies that $\{\Sigma(p,q,pqk-1)\}_{k\in \Z^{> 0}}$ spans a $\Z^{\infty}$ subgroup of 
	$\Theta^3_\Z$. This result is proved by Furuta \cite{FS:HFSF} and 
	Fintushel and Stern \cite{Fur:hom-cob}.
\end{example}

\begin{example}\label{pqk+1}
	If $p$, $q$,  $k$ are as in Example \ref{pqk-1}, then $R(p,q,pqk+1)=-1$. Therefore, Proposition 
	\ref{seifert-space-comp-prop} does not say anything about $\Gamma_Y$ when $Y=\Sigma(p,q,pqk+1)$. 
	This is in line with Example \ref{p-q-pqk+1} where it is shown that $\Gamma_Y=\Gamma_{S^3}$.
\end{example}

\begin{example}\label{large-R}
	Suppose $p$ and $q$ are coprime numbers, and $a_1$, $a_2$, $\dots$, $a_{2n+1}$ is a sequence of positive 
	integers which $a_1=p$, $a_2=q$ and for $i\geq 3$:
	\[
	  a_i=k_ia_1a_2\dots a_{i-1}-1.
	\]
	Here $k_i$ is an arbitrary positive integer. Then $R(a_1,\dots,a_{2n+1})=n$. In particular, $R$-invariant 
	could be arbitrarily large. As it is discussed in the introduction, if $R(a_1,\dots,a_{2n+1})\geq 5$, then 
	Propositions \ref{tau'-str} and \ref{seifert-space-comp-prop} imply that
	$\Sigma(a_1,\dots,a_{2n+1})$ is not homology cobordant to an $\SU(2)$-non-degenerate integral homology sphere.
\end{example}

Corollary \ref{whitehead-dble} provides another family of 3-manifolds that similar techniques can be used to obtain information about $\Gamma_Y$. Next, we give the proof of this corollary which recasts the arguments of \cite{KH:Wh-dble}.
\begin{proof}[Proof of Corollary \ref{whitehead-dble}]
	There is a negative definite cobordism with trivial first integral homology 
	from $\Sigma(p,q,2pq-1)$ to $Y_{p,q}$ \cite[Lemma 3.6]{KH:Wh-dble}. Thus Theorem \ref{neg-def} and Example \ref{pqk-1} imply that: 
	\[\Gamma_{Y_{p,q}}(1)\leq \Gamma_{\Sigma(p,q,2pq-1)}(1)=\frac{1}{4pq(2pq-1)}.\]
	The study of the Chern-Simons functional of $Y_{p,q}$ in \cite{KH:Wh-dble} shows that $\tau(Y_{p,q})\geq \frac{1}{4pq(4pq-1)}$, which can be used to verify the other inequality in \eqref{Ypq-ineq}.
	The proof of the more general inequalities in \eqref{Ypq-ineq-gen} is similar. (See the proof of 
	Theorem \ref{seifert-space-comp-thm}.) 
\end{proof}

\section{4-manifolds and Reducible Connections}\label{diag-gen}

The following theorem is the counterpart of \cite[Theorem 3 and Proposition 1]{Fro:h-inv} which generalizes Donaldson's celebrated diagonalizability theorem \cite{Don:neg-def-gauge}:
\begin{prop} \label{bounding-reducible}
	Suppose $Y$ is a homology sphere, and $X$ is a 4-manifold with boundary $Y$ such that $b_1(X)=0$, and 
	the intersection form $Q$ of $X$ on $H^2(X,\Z)/{\rm Tor}$ is negative-definite. 
	Suppose a cohomology class $e \in H^2(X,\Z)$ is fixed such that $Q(e)$ is an even integer, $|Q(e)| \geq 2$, 
	$|Q(e)|\leq |Q(e')|$ for any $e'$ with $e'\equiv e\mod 2$,
	and:
	\begin{equation}\label{red-ends}
		\sum (-1)^{(\frac{e+e'}{2})^2}\neq 0.
	\end{equation}
	where the sum is over all pairs $\{e',-e'\}$ such that $e'\in H^2(X,\Z)$, $e'\equiv e\mod 2$, and $Q(e)=Q(e')$.
	Then $\Gamma_Y(n_0)\leq -\frac{1}{4}Q(e)$ where $n_0=-\frac{1}{2}Q(e)$. 
	The equality holds only if the fundamental 
	group of $X$ admits a representation to $\SU(2)$ with non-trivial restriction to the boundary.
\end{prop}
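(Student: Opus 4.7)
The plan is to adapt the argument of Lemma \ref{upper-bound-seifert}, with the 4-manifold $X$ playing the role of the Seifert cobordism $W_0$ and with reducible ASD connections on a $\U(2)$-bundle $E \to X$ with $c_1(E)=e$ playing the role of $B \oplus \theta$. First I would attach a cylindrical end $[0,\infty) \times Y$ to $X$ to form $X^+$, fix an $\epsilon$-admissible perturbation $\pi$ of the Chern-Simons functional on $Y$, and extend it to an $\epsilon$-admissible secondary perturbation $\overline{\pi}$ on $X^+$. Because $Q$ is negative definite and $b_1(X)=0$, for each $e' \equiv e \pmod 2$ with $Q(e') = Q(e)$ the splitting $E = L_{(e+e')/2} \oplus L_{(e-e')/2}$ determines a reducible ASD connection $A_{e'}$ on the associated $\SO(3)$ bundle $\mathfrak{g}_E$ that is asymptotic to the trivial flat connection $\Theta$ on $Y$; the minimality assumption on $|Q(e)|$ guarantees that these are the \emph{only} reducibles in the relevant component of the moduli space, and each of them has topological energy $-\tfrac{1}{4}Q(e)$ relative to a fixed base path.

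Next I would define the analogue of \eqref{z-element},
\begin{equation*}
  \gamma := \sum_{\alpha}\, \#\bigl(\mathcal{M}_{z}^{\overline\pi}(X,e;\alpha) \cap V(x_1) \cap \cdots \cap V(x_{n_0-1})\bigr)\, \lambda^{\mathcal{E}(z)}\, \alpha \;\in\; C_*^{\pi}(Y),
\end{equation*}
where $x_1,\ldots,x_{n_0-1}$ are generic points in $X$, $V(x_i)$ is the standard $\mu(\mathrm{pt})$-divisor on the configuration space of irreducible connections, and the path $z$ and moduli-space component are chosen so that the cut-down piece has dimension $4(n_0-1)+\ind(\alpha)$. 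A standard count of ends in the resulting $1$-dimensional moduli space shows $d\gamma = 0$, with no anomalous boundary component coming from reducibles because they are cut out by the $V(x_i)$. The heart of the argument is the non-vanishing of $D_1 U^{n_0-1}(\gamma)$: a neighborhood of each reducible $A_{e'}$ is modeled on a cone over $\mathbf{CP}^{n_0-1}$ on which the divisors $V(x_i)$ restrict to $-h^2$, so removing small neighborhoods of the reducibles and comparing the resulting link contribution to the asymptotic end on $Y$ (exactly as in the second half of the proof of Lemma \ref{upper-bound-seifert}) identifies the leading-order value of $D_1 U^{n_0-1}(\gamma)$ with $\bigl(\sum_{\{e',-e'\}} (-1)^{(\frac{e+e'}{2})^{2}}\bigr)\cdot\lambda^{-Q(e)/4}$, times a nonzero combinatorial factor; hypothesis \eqref{red-ends} is precisely what forces this to be nonzero. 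Combined with the additivity of index and energy, this yields $\gamma \in \mathcal L_{n_0}^{\pi}$ with $\mdeg(D_1 U^{n_0-1}(\gamma)) - \mdeg(\gamma) \le -\tfrac14 Q(e) + \delta$ for arbitrarily small $\delta$, proving $\Gamma_Y(n_0) \le -\tfrac14 Q(e)$ after letting $\epsilon \to 0$.

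The main obstacle will be the precise sign and orientation bookkeeping around the cone-on-$\mathbf{CP}^{n_0-1}$ links of the reducibles, and in particular verifying that the contributions from distinct pairs $\{e',-e'\}$ assemble into the signed sum appearing in \eqref{red-ends} with exponent $(\tfrac{e+e'}{2})^{2}$; this is the classical Donaldson--Fr{\o}yshov computation and it is where both the minimality hypothesis on $|Q(e)|$ (which guarantees all reducibles live at the same energy level, so that they contribute to the leading term) and the parity hypothesis on $Q(e)$ (which ensures $(e+e')/2 \in H^2(X;\Z)$) play their roles. For the equality assertion, I would invoke Proposition \ref{CS-val}: if $\Gamma_Y(n_0) = -\tfrac14 Q(e)$ then the infimum in \eqref{Gamma1} is attained by a Chern--Simons value $\CS(\alpha) \equiv -\tfrac14 Q(e) \pmod{\Z}$ of an irreducible flat connection $\alpha$ on $Y$; running the Uhlenbeck-type limiting argument of Lemma \ref{limit-small-energy} on the sequence of ASD connections on $X^+$ realizing the infimum produces an honest ASD (in fact flat, by the energy match) $\SU(2)$-connection on $X$ with boundary value $\alpha$, whose holonomy representation of $\pi_1(X)$ restricts non-trivially to $\pi_1(Y)$.
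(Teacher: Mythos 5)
Your proposal takes essentially the same route as the paper's proof (which itself imitates Fr{\o}yshov's argument and Lemma~\ref{upper-bound-seifert}): build a cycle $\gamma$ from zero--dimensional moduli spaces over $(X,e)$, use the cone-on-a-projective-space description of neighborhoods of the reducibles $B\oplus\theta$ coming from the splittings indexed by $e'$ with $e'\equiv e\bmod 2$, $Q(e')=Q(e)$, cut down by point-class divisors, read off the non-vanishing of $D_1 U^{n_0-1}(\gamma)$ from the link contribution \eqref{red-ends}, compute the energy gap from the index formula to get $\gamma\in\mathcal L^{\pi}_{n_0}$ with $\mdeg$-difference bounded by $-\tfrac14 Q(e)$, and invoke the analogue of Lemma~\ref{limit-small-energy} for the equality/flat-connection statement. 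The reducible bookkeeping, the role of the two hypotheses on $e$, and the treatment of the equality case all match the intended proof.

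There is, however, a dimension-bookkeeping inconsistency you should fix. You define
\[
  \gamma := \sum_{\alpha}\#\bigl(\mathcal M^{\overline\pi}_z(X,e;\alpha)\cap V(x_1)\cap\cdots\cap V(x_{n_0-1})\bigr)\lambda^{\mathcal E(z)}\alpha
\]
\emph{with the $n_0-1$ point cuts already built in}, and then assert non-vanishing of $D_1U^{n_0-1}(\gamma)$. But $U$ already implements a $\mu(\mathrm{pt})$-cut, so for your $\gamma$ the expression $D_1U^{n_0-1}(\gamma)$ corresponds to $2(n_0-1)$ cuts plus a $D_1$-factor against a moduli space of total dimension $4n_0-3$, which overshoots and is vacuously zero. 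The element of $\mathcal L^{\pi}_{n_0}$ must be the \emph{uncut} chain $\gamma=\sum_z\#\mathcal M^{\overline\pi}_z(X,e;\alpha)\lambda^{\mathcal E(z)}\alpha$ (over paths of index $0$), so that $\gamma$ lies in Floer degree $4n_0-3$ as required by the definition of $\mathcal L^{\pi}_{n_0}$; the divisor cuts are applied \emph{afterwards}, to the $(4n_0-3)$-dimensional moduli space containing the reducibles, in order to produce the $1$-manifold whose two kinds of ends identify the link sum \eqref{red-ends} with $D_1U^{n_0-1}(\gamma)$. Relatedly, the link of each reducible is a cone on $\mathbf{CP}^{2n_0-2}$ (not $\mathbf{CP}^{n_0-1}$): the moduli space has dimension $4n_0-3$, so the link has dimension $4n_0-4$, and cutting by the $n_0-1$ copies of the $\mu(\mathrm{pt})$-divisor, which restricts to $-h^2$ on the link, yields the nonzero pairing $\langle(-h^2)^{n_0-1},[\mathbf{CP}^{2n_0-2}]\rangle$. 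With these two corrections the argument closes exactly as in the paper.
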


\begin{proof}
	We follow the proof of \cite[Proposition 1]{Fro:h-inv}. Since the proof is also similar to the proof of 
	Lemma \ref{upper-bound-seifert}, we only sketch the main steps. 
	Suppose $L$ is the complex line bundle on $X$ which 
	represents the cohomology class $e$. We also fix a metric with cylindrical ends on $X$.
	The bundle $L$ admits an ASD connection $B$ such that $|\!|F(B)|\!|_{L^2(X)}$ is finite. 
	Let $\epsilon_i$ be a sequence of positive numbers converging $0$, and for each $i$, fix an $\epsilon_i$-admissible
	perturbation $\pi_i$ of the Chern-Simons functional of $Y$. We also pick a compatible
	perturbation $\overline \pi_i$ of the ASD equation on $X$ { as in the previous section.}
	Define:
	\[
	  \gamma:=\sum_{z}\#\mathcal M^{\overline \pi_i}_z(X,e;\alpha) 
	  \lambda^{\mathcal E(z)}\alpha
	\]
	where the sum is over all paths along $(X,e)$ of index $0$ which restricts to a generator $\alpha$ of $C_*^{\pi_i}(Y)$.
	
	Following the same argument as in the proof of Lemma \ref{upper-bound-seifert}, we can show that $ d(\gamma)=0$
	and $D_1U^{k}(\gamma)=0$ for $k<n_0-1$. Moreover, the inequality in \eqref{red-ends} implies that 
	$D^1U^{n_0-1}(z_0)$ is a non-zero multiple of $\lambda^{-\frac{1}{4}e^2}$. 
	To be a bit more detailed, we consider the moduli space that contains the ASD connection $B\oplus \theta$ and 
	as in Lemma \ref{upper-bound-seifert}, we cut down the complement 
	of a small neighborhood of the reducible connections with $n_0-1$ codimension $4$ divisors representing $\mu({\rm pt})$. 
	This determines a 1-dimensional moduli space. Clearly, 
	the signed count of the boundary points of this moduli space is zero.
	This count has contribution from $D^1U^{n_0-1}(z_0)$ and reducible connections. 
	The count associated to the reducible 
	connections is a non-zero multiple of \eqref{red-ends}, and hence it is non-zero. 
	Consequently, $D^1U^{n_0-1}(z_0)$ is non-zero.
	The dimension formula implies that this non-zero number is a multiple of $\lambda^{-\frac{1}{4}Q(e)}$. 
	There is also an element $A_i$ of the moduli space of the form $\mathcal M^{\overline \pi_i}_z(X,e;\alpha)$
	such that $\mdeg(\gamma)$ is equal to $\mathcal E(A_i)$. In particular, we have:
	\[
	  \mdeg(D_1U^{n_0-1}(\gamma))-\mdeg(\gamma)=-\frac{1}{4}Q(e)-\mathcal E(A_i)
	\]
	By letting $i\to \infty$, we have:
	\[
	  \Gamma_Y(n_0)\leq -\frac{1}{4}Q(e)-\limsup_i\mathcal E(A_i)
	\]
	Analogue of Lemma \ref{limit-small-energy} for 4-manifolds with one boundary component implies that 
	there is a (possibly broken) ASD connection $A_0$ on $X$ which is asymptotic to an irreducible flat connection 
	on $Y$ such that:
	\[
	  \Gamma_Y(n_0)\leq -\frac{1}{4}Q(e)-\mathcal E(A_0)
	\]	
	In particular, $\Gamma_Y(n_0)\leq -\frac{1}{4}Q(e)$ and if the equality holds then $A_0$ has to be flat. Therefore, 
	there is an $\SU(2)$-representation of $\pi_1(X)$, which extends a non-trivial representation of $\pi_1(Y)$.	
\end{proof}

\begin{prop} \label{bounding-reducible-2}
	Suppose $Y$ and $X$ are given as in Proposition \ref{bounding-reducible}. Suppose also 
	a non-negative integer $m$, a homology class $\Xi \in H_2(X,\Z)$ and a cohomology class 
	$e \in H^2(X,\Z)$ are fixed such that $Q(e) \equiv m$ mod 2, $|Q(e)| \geq 2$, $|Q(e)|\leq |Q(e')|$ for any $e'$ with 
	$e'\equiv e$ mod $2$ and:
	\begin{equation}
		\sum (-1)^{Q(\frac{e+e'}{2})}(\Xi \cdot e')^m\neq 0.
	\end{equation}
	where the sum is over all pairs $\{e',-e'\}$ such that $e'\in H^2(X,\Z)$, $e'\equiv e\mod 2$, and $Q(e)=Q(e')$.
	Then $\Gamma_Y(n_0)\leq -\frac{1}{4}Q(e)$ where $n_0=-\frac{Q(e)+m}{2}$. 
	The equality holds only if the fundamental 
	group of $X$ admits a representation to $\SU(2)$ with non-trivial restriction to the boundary.
\end{prop}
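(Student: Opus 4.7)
The plan is to mirror the proof of Proposition \ref{bounding-reducible}, with the homology class $\Xi$ and integer $m$ entering through codimension-two $\mu$-divisors on $X$. First I would fix a cylindrical-end metric on $X$, a sequence $\epsilon_i \to 0$ of positive numbers, and $\epsilon_i$-admissible perturbations $\pi_i$ on $Y$ together with compatible perturbations $\overline{\pi}_i$ of the ASD equation on $X$. Let $B$ denote a finite-energy ASD connection on the complex line bundle $L\to X$ with $c_1(L)=e$, and define
\[
  \gamma := \sum_{z}\#\mathcal{M}^{\overline{\pi}_i}_{z}(X,e;\alpha)\,\lambda^{\mathcal{E}(z)}\alpha
\]
summed over index-zero paths $z$ along $(X,e)$ restricting to a generator $\alpha$ of $C_*^{\pi_i}(Y)$. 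The identities $d\gamma = 0$ and $D_1U^k(\gamma)=0$ for $k<n_0-1$ follow exactly as in Lemma \ref{upper-bound-seifert} and Proposition \ref{bounding-reducible}: the former from boundary analysis of compactified $1$-dimensional moduli spaces; the latter because any broken ASD configuration contributing to such a term would require an ASD connection on $(X,e')$ with $e'\equiv e\pmod 2$ and $|Q(e')|<|Q(e)|$, contradicting the minimality hypothesis on $e$.

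The central new step is to show that $D_1U^{n_0-1}(\gamma)\neq 0$. I would pick an embedded surface $\Sigma\subset X$ representing $\Xi$, distinct points $x_1,\dots,x_{n_0-1}\in X$, and $m$ small generic parallel copies $\Sigma_1,\dots,\Sigma_m$ of $\Sigma$. Consider the moduli space $\mathcal{M}^{\overline{\pi}_i}_{z_0}(X,e;\Theta)$ containing the $\U(1)$-reducibles $B_{e'}\oplus\theta$ for all $e'\in H^2(X,\Z)$ with $e'\equiv e\pmod 2$ and $Q(e')=Q(e)$, and cut it by the divisors $V(x_1),\dots,V(x_{n_0-1})$ and $V(\Sigma_1),\dots,V(\Sigma_m)$. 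Removing small neighborhoods of the reducibles yields a $1$-manifold whose boundary has two types of components: the links of the reducibles, and (by the gluing argument of Lemma \ref{upper-bound-seifert}) broken configurations whose signed count is a nonzero multiple of $D_1U^{n_0-1}(\gamma)$. The link of each reducible $B_{e'}\oplus\theta$ is identified with $\mathbb{CP}^{b-2}$ for $b=-Q(e)$; by \cite[Subsection 5.1.2]{DK}, $V(x)$ restricts to $-h^2$ and $V(\Sigma)$ restricts to $-(\Xi\cdot e')h$ on this link, so the contribution of the reducible with class $e'$ is a nonzero rational multiple of $(-1)^{Q((e+e')/2)}(\Xi\cdot e')^m$. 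Summing over pairs $\{e',-e'\}$ gives precisely the expression assumed nonzero in the hypothesis, so $D_1U^{n_0-1}(\gamma)\neq 0$.

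Finally, exactly as in the last paragraph of the proof of Proposition \ref{bounding-reducible}, one has $\mdeg(D_1U^{n_0-1}(\gamma))-\mdeg(\gamma) = -\tfrac{1}{4}Q(e)-\mathcal{E}(A_i)$ for some index-$0$ connection $A_i \in \mathcal{M}^{\overline{\pi}_i}_{z}(X,e;\alpha)$. Passing to the limit $i\to\infty$ and invoking the manifold-with-boundary analogue of Lemma \ref{limit-small-energy} produces a possibly broken ASD connection $A_0$ on $X$ asymptotic to an irreducible flat connection on $Y$, yielding $\Gamma_Y(n_0)\leq -\tfrac{1}{4}Q(e)-\mathcal{E}(A_0)$; equality forces $\mathcal{E}(A_0)=0$, so $A_0$ is flat and produces the required $\SU(2)$-representation of $\pi_1(X)$ extending a non-trivial representation of $\pi_1(Y)$. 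I expect the main technical obstacle to be correctly tracking the orientation sign $(-1)^{Q((e+e')/2)}$ and verifying that with $n_0-1$ codimension-four and $m$ codimension-two cuts the cut-down moduli space is genuinely $1$-dimensional without spurious boundary components (including handling the parity case when $Q(e)$ is odd so that the Floer grading $4n_0-3$ lands in the correct residue class modulo $8$).
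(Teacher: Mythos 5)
Your proposal departs from the paper in a way that creates a genuine gap: you define
\[
  \gamma := \sum_{z}\#\mathcal{M}^{\overline{\pi}_i}_{z}(X,e;\alpha)\,\lambda^{\mathcal{E}(z)}\alpha
\]
using \emph{index-zero} paths, with no $V(\Sigma_j)$ cuts, exactly as in Proposition~\ref{bounding-reducible}. The paper's proof instead modifies the class $\gamma$ itself, setting
\[
  \gamma := \sum_{z}\#\bigl(\mathcal{M}^{\overline{\pi}_i}_{z}(X,e;\alpha)\cap V(\Sigma_1)\cap\dots\cap V(\Sigma_m)\bigr)\,\lambda^{\mathcal{E}(z)}\alpha ,
\]
so the moduli spaces have $\ind(z)=2m$ before cutting. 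This is not a cosmetic difference. With $\ind(z)=0$, the generator $\alpha$ lives in Floer grading $\equiv -2Q(e)-3 \pmod 8$, whereas $D_1U^{n_0-1}$ can be non-zero only on grading $4n_0-3 = -2Q(e)-2m-3$. The two differ by $2m \pmod 8$, so your $\gamma$ sits in the wrong degree and $D_1U^{n_0-1}(\gamma)$ vanishes for grading reasons alone whenever $m\not\equiv 0\pmod 4$. Incorporating the $m$ codimension-two divisors into the definition of $\gamma$ is precisely what restores the correct grading $4n_0-3$.

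This mismatch also contaminates the non-vanishing argument. When you cut $\mathcal{M}^{\overline{\pi}_i}_{z_0}(X,e;\Theta)$ by $V(x_1),\dots,V(x_{n_0-1}),V(\Sigma_1),\dots,V(\Sigma_m)$ and remove neighborhoods of the reducibles, the broken boundary components of the resulting $1$-manifold have the form $\bigl(\mathcal{M}(X,e;\alpha_0)\cap V(\Sigma_1)\cap\dots\cap V(\Sigma_m)\cap V(x_1)\cap\dots\bigr)\times\breve{\mathcal{M}}(\alpha_0,\Theta)\times\dots$ — the $X$-factor necessarily carries the $V(\Sigma_j)$ cuts, since the $\Sigma_j$ are compactly supported in $X$. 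Hence the signed count of these ends is a multiple of $D_1U^{n_0-1}$ applied to the \emph{cut} $\gamma$, not to your $\gamma$. So even in the case $m\equiv 0\pmod 4$, your boundary-of-a-$1$-manifold identity proves non-vanishing of $D_1U^{n_0-1}$ of the wrong element. The fix is the one-line change the paper makes: build the $\mu(\Sigma)$ cuts into $\gamma$ from the start, then run the rest of your argument (which is otherwise sound, including the evaluation of $V(\Sigma)$ on the link of a reducible as $-(\Xi\cdot e')h$, the sign $(-1)^{Q((e+e')/2)}$, and the limiting argument via the analogue of Lemma~\ref{limit-small-energy}).
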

\begin{proof}
	Essentially the same proof as in Proposition \ref{bounding-reducible} verifies this claim. 
	We just need to modify the definition 
	of $\gamma$ as follows:
	\[
	  \gamma:=\sum_{z}\#(\mathcal M_\gamma^{\overline \pi_i}(X,e;\alpha)\cap V(\Sigma_1)\cap \dots  V(\Sigma_m)) 
	  \lambda^{\mathcal E(\gamma)}\alpha
	\]
	where $\Sigma_i$ is an embedded surface\footnote{As it is customary in the definition of Donaldson invariants,
	we assume that these surfaces intersect generically to avoid the issue of instanton bubbles.} 
	in $X$ representing the cohomology class $\Xi$ and 
	$V(\Sigma_i)$ is the standard codimension $2$ divisor in the configuration space of irreducible connections on $X$ 
	representing $\mu(\Sigma)$ \cite{DK}.
\end{proof}

\begin{proof}[Proof of Theorem \ref{NSIF-neg-def}]
	{
	Let $e$ be a cohomology class which represents a non-zero element of with minimum $|Q(e)|$. In particular,
	$Q(e)=-m(\mathcal L)$ where $m(\mathcal L)$ is defined in \eqref{m(l)}. If $e'\in H^2(X,\Z)$ is another
	cohomology class that $e\equiv e'$ mod $2$ and $Q(e)=Q(e')$, then either $e-e'$ or $e+e'$ is a torsion element. 
	In particular, in the case that $Q(e)$ is an even number, the condition in \eqref{red-ends} holds and 
	we can conclude that $\Gamma_Y(-\frac{1}{2}Q(e))\leq -\frac{1}{4}Q(e)$. In the case that $Q(e)$ is an odd number,
	we set $m=1$ and $\Xi$ to be a cohomology class that $\Xi\cdot e=1$ and apply 
	Proposition \ref{bounding-reducible-2} to conclude that $\Gamma_Y(-\frac{Q(e)+1}{2})\leq -\frac{1}{4}Q(e)$.
	This inequality completes the proof.}
\end{proof}

\appendix
\section{Evaluating a Function at Cohomology classes} \label{min-max-app}
Suppose $M$ is a compact topological space, $f:M\to \R$ is a continuous function and $\sigma\in H_*(M,\Z)$. Let $M(r):=f^{-1}((-\infty,r])$ and define: 
\[
  f(\sigma):=\inf \{r\mid \sigma \in {\rm image}(i_*:H_*(M(r),\Z)\to H_*(M,\Z))\}
\]
We say $f(\sigma)$ is the {\it evaluation} of $f$ at the homology class $\sigma$. 

In the case that $M$ is a smooth manifold and $f$ is a smooth function, we can reformulate the above definition using the language of Morse homology. We firstly assume that $f$ is a Morse function and fix a Riemannian metric on $M$ such that $f$ is a Morse-Smale function. Suppose $(C_*(f),d)$ is the Morse complex associated to $f$. Then $C_*(f)$ is generated by critical points of $f$, and $d$ is defined using down-ward gradient flow lines of $f$. Moreover, the homology of the complex $(C_*(f),d)$, denoted by $H_*(f)$, is naturally isomorphic to $H_*(M,\Z)$. For each homology class $\sigma$, we have:
\begin{equation}\label{min-max}
  f(\sigma)=\min_{\sigma=[\sum_{i}a_i\alpha_i]} \max_i\{f(\alpha_i)\}
\end{equation}
Here the minimum is over all linear combinations $\sum_{i}a_i\alpha_i$ of critical points of $f$ which represents the homology class $\sigma$. The above identity is a consequence of the fact that for any $r$, the sub-complex generated by critical points $\alpha$ of $f$ with $f(\alpha)\leq r$ has the same homology as the singular homology of $M(r)$. Clearly, the above min-max formula is independent of the choice of the metric because it is equal to $f(\sigma)$. Alternatively, one can use standard continuation maps to show the independence of the left hand side of \eqref{min-max} from the choice of the metric. 

\begin{example}
	If $f$ is a self-indexing Morse function, then for any $\sigma\in H_i(M,\Z)$, we have $f(\sigma)=i$.
\end{example}

The above definition can be modified easily for the functions which are not necessarily Morse. Given a function $f$, we fix a smooth function $\pi_i:M\to \R$ for each positive integer $i$ such that $f+\pi_i$ is a Morse function and $|\pi_i|_{C^0}\to 0$. Then we have:
\begin{equation}\label{min-max-per}
  f(\sigma)=\lim_{i\to \infty}\min_{\substack{\sum_{i}a_i\alpha_i\in C_*(f+\pi_i)\\\sigma=[\sum_{i}a_i\alpha_i]}} \max_i\{f(\alpha_i)\}
\end{equation}
In particular, the term on the min-max formula on the right hand side of \eqref{min-max-per} is independent of the choice of the perturbations $\pi_i$. One can also observe from the definition of $f(\sigma)$ that this number is equal to a critical value of $f$.

The definition of $\Gamma_Y$ has some formal similarities with the expression on the left hand side of \eqref{min-max-per}, where $M$ is replaced with the configuration space $\mathcal B(Y)$ and $f$ is replaced with the $S^1$-valued Chern-Simons functional $\CS$. Another similarity is that $\Gamma_Y$, modulo integers, essentially takes values in the set of critical values of the Chern-Simons functional, i.e., the values of the Chern-Simons functional on $\SU(2)$-flat connections. However, one should not go too far in this analogy. To have a better finite dimensional approximation to the construction of this paper one should consider a manifold $M$ with an $\SO(3)$ action which has one fixed point and otherwise it is free. We also need to fix an $\SO(3)$-invariant function on $M$. Working with an appropriate model for the homology of $M$, we can produce a chain complex with coefficients in $\Lambda$ which has the form in \eqref{shape-SO-com}. Then repeating the construction of Subsections \ref{equiv} and \ref{GammaY} would provide a better finite dimensional approximation to $\Gamma_Y$.

\bibliography{references}
\bibliographystyle{hplain}
\Addresses
\end{document}